\numberwithin{equation}{section}
\newtheorem{lemma}{Lemma}[section]
\newtheorem{proposition}[lemma]{Proposition}
\newtheorem{theorem}[lemma]{Theorem}
\newtheorem{corollary}[lemma]{Corollary}
\newtheorem{definition}[lemma]{Definition}
\newtheorem{remark}[lemma]{Remark}
\newcommand{\Aa}{{\mathcal A}}
\newcommand{\Hh}{{\mathcal H}}
\newcommand{\Kk}{{\mathcal K}}
\newcommand{\Mm}{{\mathcal M}}
\newcommand{\Oo}{{\mathcal O}}
\newcommand{\Tt}{{\mathcal T}}
\newcommand{\Uu}{{\mathcal U}}
\newcommand{\Vv}{{\mathcal V}}
\newcommand{\p}{{\partial}}
\renewcommand{\r}{{\rho}}
\newcommand{\C}{{\mathbb C}}
\newcommand{\R}{{\mathbb R}}
\newcommand{\la}{{\langle}}
\newcommand{\ra}{{\rangle}}
\renewcommand{\k}{{\kappa}}
\def\address#1#2{\begingroup
\noindent\parbox[t]{12cm}{%
\small{\scshape\ignorespaces#1}\par\vskip1ex
\noindent\small{\itshape E-mail address}%
\/: #2\par\vskip4ex}\hfill%
\endgroup}%
\title{
Conformal transformations of the pseudo-Riemannian metric 
of a homogeneous pair 
}
\author{Kotaro Kawai
\footnote{2010 Mathematics Subject Classification. 
58B20, 53C29, 58D17, 51F99.}
}
\date{}
\begin{document}

\maketitle

\begin{abstract}
We introduce a new notion of a homogeneous pair 
for a pseudo-Riemannian metric $g$ and a positive function $f$
on a manifold $M$ admitting a free $\R_{>0}$-action. 
There are many examples admitting this structure. 
For example, 
(a) a class of pseudo-Hessian manifolds admitting a free $\R_{>0}$-action 
and a homogeneous potential function 
such as the moduli space of torsion-free $G_2$-structures, 
(b) the space of Riemannian metrics on a compact manifold, 
and (c) many moduli spaces of geometric structures such as torsion-free ${\rm Spin}(7)$-structures 
admit this structure. 
Hence we provide the unified method for the study of these geometric structures.

We consider conformal transformations of 
the pseudo-Riemannian metric $g$ of a homogeneous pair $(g, f)$. 
Showing that the pseudo-Riemannian manifold $(M, (v \circ f) g)$, 
where $v: \R_{>0} \rightarrow \R_{>0}$ is a smooth function, 
has the structure of a warped product,  
we study the geometric structures such as 
the sectional curvature, geodesics and 
the metric completion (if $g$ is positive definite) 
w.r.t. $(v \circ f) g$ 
in terms of those on the level set of $f$. 
In particular, (1) we can generalize the result of Clarke and Rubinstein (\cite{CR2})
about the metric completion of the space of Riemannian metrics w.r.t. 
the conformal transformations of the Ebin metric, 
and (2) two canonical Riemannian metrics on the $G_2$ moduli space 
have different metric completions.

\end{abstract}

\section{Introduction} \label{sec:intro}

In this paper, we introduce a new notion of a homogeneous pair 
for a pseudo-Riemannian metric $g$ and a positive function $f$
on a manifold $M$, possibly infinite dimensional, 
admitting a free $\R_{>0}$-action as follows. 

\begin{definition} \label{def:homog pair intro}
Let $(M,g)$ be a pseudo-Riemannian manifold 
which admits a free $\R_{>0}$-action. 
Let $P \in \mathfrak{X}(M)$ be a vector field generated by the $\R_{>0}$-action. 
Suppose that $f: M \rightarrow \R_{>0}$ is a smooth function and 
$\alpha \in \R - \{ 0 \}$. 

The pair $(g, f)$ is called a 
{\bf homogeneous pair} of degree $\alpha$ if 
\begin{align*}
m_\lambda^* g = \lambda^\alpha g, \qquad
m_\lambda^* f = \lambda^\alpha f, \qquad
g(P, \cdot) = d f, 
\end{align*}
for any $\lambda > 0$, 
where we denote by $m_\lambda$ the action of $\lambda >0$. 
\end{definition}

There are many examples admitting this structure. 
For example, 
a class of pseudo-Hessian manifolds admitting a free $\R_{>0}$-action 
such as the moduli space of torsion-free $G_2$-structures. 
Hessian manifolds appear in many fields of mathematics 
such as information geometry (\cite{AN, AJLS}) 
and the moduli spaces of geometric structures (\cite{Hitchin1, Hitchin2}).
The space of Riemannian metrics on a compact manifold, 
the moduli space of torsion-free ${\rm Spin}(7)$-structures
and many other moduli spaces of geometric structures also admit this structure. 
For more details, see Section \ref{sec:app}. 
Hence we provide the unified method for the study of these geometric structures.

Given a homogeneous pair $(g, f)$, 
we consider the conformal transformations of $g$ 
of the form $(v \circ f) g$, where $v: \R_{>0} \rightarrow \R_{>0}$ is a smooth function. 
There are two reasons to consider this. 

\begin{enumerate}
\renewcommand{\labelenumi}{(\alph{enumi})}
\item
The conformal transformations of $g$ is considered in many examples 
such as the $G_2$ moduli space and the space of Riemannian metrics. 
See Sections \ref{subsec:G2 moduli} and \ref{subsec:Riem met}. 

\item
When $g$ is positive definite and the pseudometric $d_g$ induced from $g$ is a metric
(This is always true when $M$ is finite dimensional. 
In the infinite dimensional case, 
there are examples of a Riemannian metric 
whose induced pseudometric is identically zero (\cite{MM}).), 
the conformal transformation is the simplest way 
to produce the different metric completion w.r.t. the induced metric. 

Clarke and Rubinstein (\cite{CR2}) showed 
that there is an explicit weak Riemannian metric $\tilde g_E$ 
in the conformal class of the Ebin metric $g_E$ 
on the space of Riemannian metrics $\Mm$ 
such that the metric completion induced from $\tilde g_E$ 
is strictly smaller than that from $g_E$. 
They considered this as a first step to remove certain types of degenerations 
so that the canonical functionals such as the curvature, diameter, or injectivity radius 
are controlled by the metric geometry on $\Mm$, 
which is not true for the metric induced from $g_E$ (\cite{Clarke2}). 

We hope that generalizing this by using a homogeneous pair, 
which is done in Theorem \ref{thm:comp}, 
would be useful to remove certain types of degenerations for other geometric problems. 
\end{enumerate}

For the pseudo-Riemannian manifold $(M, (v \circ f) g)$, we first show that 
the following splitting theorem holds as in 
\cite[Theorem 1]{Loftin} and \cite[Lemmas 2.1 and 2.4]{Totaro}.

\begin{theorem} \label{main thm1}
Let $(M,g)$ be a pseudo-Riemannian manifold 
which admits a free $\R_{>0}$-action 
and let $f: M \rightarrow \R_{>0}$ be a smooth function. 
Suppose that $(g, f)$ is a homogeneous pair of degree $\alpha$. 
Then 
\begin{itemize}
\item
for any $l >0$, 
$M_l = \{ x \in M \mid f (x) = l \}$
is a submanifold of $M$ 
and the pullback $g_l$ of $g$ to $M_l$ is a pseudo-Riemannian metric on $M_l$.

\item For a smooth function $v: \R_{>0} \rightarrow \R_{>0}$, 
there is an isometry between 
$\left(\R_{>0} \times M_l, v (r) \left( \frac{1}{\alpha r} dr^2 
+ \frac{r}{l} g_l \right) \right)$
and $(M,(v \circ f) g)$. 
\end{itemize}
\end{theorem}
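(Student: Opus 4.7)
The plan is to derive everything from the infinitesimal form of the three homogeneity relations, and then to build the claimed isometry as the natural parametrization of $M$ by the $\R_{>0}$-orbits crossing $M_l$.

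First I would unpack the hypotheses at the infinitesimal level. Differentiating $m_\lambda^* f = \lambda^\alpha f$ and $m_\lambda^* g = \lambda^\alpha g$ in $\lambda$ at $\lambda=1$ gives $Pf = \alpha f$ and $\mathcal{L}_P g = \alpha g$; combining $Pf = \alpha f$ with the defining relation $g(P,\cdot) = df$ yields the pointwise identity $g(P,P) = \alpha f$, which is nowhere zero since $f > 0$. Consequently $df = g(P,\cdot)$ vanishes nowhere, so every positive value $l$ is a regular value of $f$ and $M_l$ is a smooth hypersurface. Its tangent space is $T_x M_l = \ker df_x = P|_x^\perp$, and together with the $g$-orthogonal splitting $T_x M = \R P|_x \oplus T_x M_l$ (which is valid precisely because $g(P,P)\neq 0$), this immediately gives nondegeneracy of the pullback $g_l$, handling the first bullet.

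For the second bullet I would introduce
\[
\Psi \colon \R_{>0} \times M_l \longrightarrow M, \qquad \Psi(\lambda, x) = m_\lambda(x).
\]
Because the action is free and $f(m_\lambda(x)) = \lambda^\alpha l$ runs bijectively over $\R_{>0}$ as $\lambda$ does, each orbit meets $M_l$ in exactly one point, so $\Psi$ is a bijection; transversality of $P$ to $M_l$ upgrades this to a diffeomorphism by the implicit function theorem. Writing $m_\lambda = \phi_{\ln \lambda}$ for the one-parameter group $\phi_s$ of $P$, one has $\Psi_*(\partial_\lambda)|_{(\lambda,x)} = \lambda^{-1} P|_{m_\lambda(x)}$, while $\Psi_*(v) = (m_\lambda)_* v$ for $v \in T_x M_l$. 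Using the three homogeneity identities, a short computation yields
\[
\Psi^* g(\partial_\lambda, \partial_\lambda) = \alpha l\,\lambda^{\alpha-2}, \qquad \Psi^* g(\partial_\lambda, v) = 0, \qquad \Psi^* g(v, w) = \lambda^\alpha g_l(v, w),
\]
where the cross term vanishes because $g(P, (m_\lambda)_* v) = df((m_\lambda)_* v) = d(\lambda^\alpha f)(v) = 0$.

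Finally I would change variables via $r = \lambda^\alpha l$ (which coincides with $f \circ \Psi$); the coefficient $\alpha l\,\lambda^{\alpha-2}\,d\lambda^2$ collapses to $\tfrac{1}{\alpha r}\,dr^2$, and $\lambda^\alpha g_l$ becomes $\tfrac{r}{l}\,g_l$. Multiplying through by $v(r) = v(f \circ \Psi)$ then gives exactly the warped form of $(v\circ f)g$ in the statement. The main obstacle is really only bookkeeping: carefully tracking the relation between $\partial_\lambda$ and $P$ under $\Psi_*$, and executing the change of variable cleanly so that the formal coefficients telescope to the stated form. There is no analytic subtlety, since the whole argument is pointwise and reduces to $g(P,P) = \alpha f$ together with the homogeneity of $g$ and $f$.
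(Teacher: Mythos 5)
Your proposal is correct and follows essentially the same route as the paper: the identity $df(P)=\alpha f$ gives regularity of $l$ and the $g$-orthogonal splitting $T_xM=\R P_x\oplus T_xM_l$ gives nondegeneracy of $g_l$, and the warped-product form comes from pulling back $g$ along the orbit parametrization and using the three homogeneity relations. The only cosmetic difference is that the paper builds the change of variables into the map itself, taking $\psi(r,y)=m((r/l)^{1/\alpha},y)$ with explicit inverse $\psi^{-1}(x)=(f(x),m((l/f(x))^{1/\alpha},x))$, whereas you parametrize by $\lambda$ and substitute $r=\lambda^\alpha l$ at the end; the computations are identical in content.
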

The more detailed description is given in Theorem \ref{thm:split}. \\

Hence $(M,(v \circ f) g)$ has the structure of a warped product, 
which is a great advantage. 
For example, the geodesic equations get complicated 
under the conformal transformations in general, 
but we can treat them in a simpler way. 
We can say the same for the sectional curvature and the metric completion. 

We summarize results obtained by 
analyzing the sectional curvature, geodesics, and the metric completion 
of the warped product pseudo-Riemannian metric (\ref{eq:def gw}). 
Here, we use the notation of Theorem \ref{main thm1} 
and $(g, f)$ is a homogeneous pair on a manifold $M$. 
\begin{enumerate}
\renewcommand{\labelenumi}{(\arabic{enumi})}
\item 
When $\dim M=2$, we construct a 2 parameter family of 
pseudo-Riemannian metrics of  constant sectional curvature 
in the conformal class of $g$ 
(Corollary \ref{cor:2dim csc Hess}). 
The same is true when $M$ is a direct product of 
such manifolds with $\dim \leq 2$ (Remark \ref{rem:2dim decomp}). 

\item
We construct a 1 parameter family of constant sectional curvature pseudo-Riemannian metrics 
in the conformal class of $g$ 
if the level set $(M_l, g_l)$ has constant sectional curvature (Proposition \ref{prop:conf csc}). 
If the sectional curvature of the level set $(M_l, g_l)$ is bounded 
and $g$ is positive or negative definite, 
we give the bound of the sectional curvature of $(v \circ f) g$ 
for some $v$ (Corollary \ref{cor:bound Hess}). 

For a homogeneous pair $(g, f)$, 
we define a new pseudo-Riemannian metric $\hat g$ in (\ref{eq:def hat g}) 
such that $(\hat g, f)$ is also a homogeneous pair. 
This pseudo-Riemannian metric $\hat g$ 
has a different signature from $g$ and appears in many examples. 
See Sections \ref{sec:Hess} and \ref{sec:app}. 
We give further results of this kind for $\hat g$ 
(Corollaries \ref{cor:conf csc Hessian} and \ref{cor:bound log flat}). 

\item
When $v(r)=r^\beta$ for $\beta \in \R$, 
we describe geodesics of $f^\beta g$ explicitly 
using those in $(M_l, g_l)$ (Proposition \ref{prop:geod explicit}). 
Then we give the conditions on $\beta$ so that 
the function $f$ is convex or concave w.r.t. $f^\beta g$ (Proposition \ref{prop:convex}). 

\item
When $g$ is positive definite and the pseudometric $d_g$ induced from $g$ is a metric, 
we describe the metric completion of $M$ w.r.t. $(v \circ f) g$ for some of $v$ 
in terms of the metric completion of $M_l$ w.r.t. $g_l$
(Theorem \ref{thm:comp}). 
\end{enumerate}

Note that to know the above geometric structures completely, 
we need the information of $(M_l, g_l)$, 
which is obtained from that of $(M,g)$ 
(Lemma \ref{lem:sc flat}, Proposition \ref{prop:comp level}). 
However, by the results above, 
if we have the information of $(M, (v \circ f) g)$ for one $v$, 
we can obtain the information of $(M, (\tilde v \circ f) g)$ for many other $\tilde v$'s.

We can apply results above to many geometric problems. 
See Section \ref{sec:app}. 
We list some particularly important results. 

\begin{enumerate} 
\setcounter{enumi}{4}
\renewcommand{\labelenumi}{(\arabic{enumi})}
\item 
We generalize the result of Clarke and Rubinstein (\cite{CR2})
about the metric completion of the space of Riemannian metrics w.r.t. 
the conformal deformations of the Ebin metric $g_E$ (Theorem \ref{thm:comp riem}).  

They considered the conformal transformations of the form $g_E/f^p$, 
where $f$ is the volume functional and $p \in \mathbb{Z}$. 
We can determine the metric completion w.r.t. $(v \circ f) g_E$ 
for more general functions $v: \R_{>0} \rightarrow \R_{>0}$. 
In particular, we can give infinitely many examples
whose metric completions are strictly smaller than that of $g_E$. 

\item
There are two canonical Riemannian metrics on the $G_2$ moduli space, 
which are related by the conformal transformation. 
Both of them are also studied in detail (cf. \cite{GY, KLL}). 
We can show that they have different metric completions (Corollary \ref{cor:comp G2}). 
\end{enumerate}

This paper is organized as follows. 
In Section \ref{sec:wp}, 
we study in detail the geometric structures of a warped product 
such as the the sectional curvature, the geodesics 
and the metric completion. 
In Section \ref{sec:hp}, 
we prove Theorem \ref{main thm1} (Theorem \ref{thm:split}) 
and results (1)--(4) above by the results in Section \ref{sec:wp}. 
In Section \ref{sec:Hess}, we show that some pseudo-Hessian manifolds admit a homogeneous pair, 
which recovers \cite[Theorem 1]{Loftin} and \cite[Lemmas 2,1 and 2.4]{Totaro}. 
In Section \ref{sec:app}, 
we give examples as previously stated and apply our method. 
In Appendix \ref{app:notation}, 
we summarize the notations and basic definitions used in this paper.

\noindent{{\bf Acknowledgements}}: 
The author would like to thank 
Sergey Grigorian, Spiro Karigiannis and Burt Totaro
for motivating this study. 
He is grateful to Hitoshi Furuhata 
for pointing out that 
the flatness assumption on the connection in 
Proposition \ref{prop:split met} is unnecessary. 
He thanks Takashi Kurose 
for the useful advice on Hessian geometry.
He thanks Sumio Yamada for letting him know 
the theory of the Teichm\"uller space. 
He is indebted to an anonymous referee for the careful reading 
of an earlier version of this paper and useful comments on it. 

This work is supported by 
JSPS KAKENHI Grant Number JP17K14181 
and Research Grants of Yoshishige Abe Memorial Fund.

\section{Warped products} \label{sec:wp}

Let $(X,g_X)$ and $(Y,g_Y)$ be pseudo Riemannian manifolds 
and $\r: X \rightarrow \R_{>0}$ be a positive smooth function on $X$. 
Let $\pi_X: X \times Y \rightarrow X$ and $\pi_Y: X \times Y \rightarrow Y$ 
be the canonical projections. 
The warped product $X \times_\r Y$ is a product manifold $X \times Y$ 
with the pseudo-Riemannian metric $g = \pi_X^* g_X + (\r \circ \pi_X)^2 \pi_Y^* g_Y$:
$$
X \times_\r Y = \left(X \times Y, g=\pi_X^* g_X + (\r \circ \pi_X)^2 \pi_Y^* g_Y \right). 
$$
For simplicity, we drop $\pi_X$ and $\pi_Y$ 
and write $X \times_\r Y = \left(X \times Y, g = g_X + \r^2 g_Y \right).$
We study the geometric structures of warped products in detail 
for the application in Section \ref{sec:hp}.

\subsection{The curvature tensor and the geodesics}

In this subsection, we study the curvature tensor and the geodesics of the warped product 
$X \times_\r Y = \left(X \times Y, g = g_X + \r^2 g_Y \right)$ based on \cite[Section 7]{O'Neill}. 

The vector fields on $X$ and $Y$ are canonically extended to the vector fields on $X \times Y$. 
We identify these vector fields.

\subsubsection{The curvature tensor}

\begin{lemma}[{\cite[Proposition 7.42]{O'Neill}}] \label{lem:wp curv}
Use the notation of Appendix \ref{app:notation}. 
For vector fields 
$x,y,z \in \mathfrak{X} (X)$ and 
$a,b,c \in \mathfrak{X} (Y)$, 
the curvature tensor $R^g$ of $g$ is given as follows. 
\begin{align*}
R^g (x,y)z &= R^{g_X} (x,y) z \ (\in \mathfrak{X} (X)), \\
R^g (a,x)y &= - \frac{ (\nabla^{g_X} d \r) (x,y)}{\r} a,\\
R^g (x,y)a &= R^g (a,w) x=0,\\
R^g (x,a)b &= -\frac{g(a,b)}{\r} \nabla^{g_X}_x ({\rm grad}^{g_X} \r), \\
R^g(a,b)c &= R^{g_Y} (a,b) c + \frac{g_X ({\rm grad}^{g_X} \r, {\rm grad}^{g_X} \r)}{\r^2} (g(a,c)b-g(b,c)a).
\end{align*}
where ${\rm grad}^{g_X} \r \in \mathfrak{X}(X)$ is defined by $g_X ({\rm grad}^{g_X} \r, \cdot) = d\r$. 
\end{lemma}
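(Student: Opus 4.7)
The plan is the standard two-stage warped-product calculation. First, I would compute the Levi-Civita connection $\nabla^g$ of $g = g_X + \rho^2 g_Y$ on each of the three types of pairs (two horizontal, mixed, two vertical) by applying the Koszul formula. The key simplifying facts are that lifts of vector fields from $X$ and $Y$ to $X \times Y$ commute (their Lie bracket vanishes), that $g(x,a)=0$ whenever $x \in \mathfrak{X}(X)$ and $a \in \mathfrak{X}(Y)$, that $a(\rho)=0$, and that $g_Y$ is independent of the $X$-coordinates. Working these through, one obtains
\begin{align*}
\nabla^g_x y &= \nabla^{g_X}_x y, \\
\nabla^g_x a = \nabla^g_a x &= \frac{x(\rho)}{\rho}\, a, \\
\nabla^g_a b &= \nabla^{g_Y}_a b - \frac{g(a,b)}{\rho}\,\mathrm{grad}^{g_X} \rho,
\end{align*}
for $x,y \in \mathfrak{X}(X)$ and $a,b \in \mathfrak{X}(Y)$.

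With these formulas in hand, the second stage is to substitute into $R^g(U,V)W = \nabla^g_U \nabla^g_V W - \nabla^g_V \nabla^g_U W - \nabla^g_{[U,V]} W$ for each of the five cases. The identity $R^g(x,y)z = R^{g_X}(x,y)z$ is immediate because $\nabla^g$ restricted to horizontal fields agrees with $\nabla^{g_X}$. For $R^g(a,x)y$, after using $[a,x]=0$ one is left with a multiple of $a$ whose coefficient, upon expanding $x(y(\rho)/\rho)$ and invoking the identity $(\nabla^{g_X} d\rho)(x,y) = x(y(\rho)) - (\nabla^{g_X}_x y)(\rho)$, simplifies to $-(\nabla^{g_X} d\rho)(x,y)/\rho$. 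The vanishing of $R^g(x,y)a$ follows from the symmetric cancellation $x(y(\rho)/\rho) - y(x(\rho)/\rho) = [x,y](\rho)/\rho$, exactly compensating the $\nabla^g_{[x,y]} a$ term.

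The remaining two formulas cost more bookkeeping but introduce no new ideas. For $R^g(x,a)b$ I would plug in the formula for $\nabla^g_a b$; the $\nabla^{g_Y}_a b$ part contributes nothing after the same commutator-style cancellation (it behaves like an arbitrary vertical field under horizontal derivation), leaving only the radial correction term, which when differentiated by $x$ yields the claimed $-(g(a,b)/\rho)\,\nabla^{g_X}_x(\mathrm{grad}^{g_X} \rho)$. Finally, for $R^g(a,b)c$ the intrinsic part $R^{g_Y}(a,b)c$ appears from the nested $\nabla^{g_Y}$ terms, while the extra contribution comes from $\nabla^g_a(\mathrm{grad}^{g_X}\rho) = (g_X(\mathrm{grad}^{g_X}\rho,\mathrm{grad}^{g_X}\rho)/\rho)\, a$ combined with the $g(a,b)$ factors, producing the stated curvature correction. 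I expect the main obstacle to be precisely this last case, since one must track the raising of $d\rho$, the product structure $g(a,b) = \rho^2 g_Y(a,b)$, and the intertwining of vertical-vertical and radial derivatives consistently; everything else is dictated by symmetry and the Koszul formula.
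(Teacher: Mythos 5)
Your proposal is correct and is essentially the standard derivation that the paper delegates entirely to the cited reference (\cite[Propositions 7.35 and 7.42]{O'Neill}): compute the Levi-Civita connection of the warped product case by case via the Koszul formula, then substitute into the definition of $R^g$. Your connection formulas and the cancellations you describe all check out, and your stated curvature convention $R^g(U,V)W = \nabla^g_U\nabla^g_V W - \nabla^g_V\nabla^g_U W - \nabla^g_{[U,V]}W$ agrees with the paper's (which, as the paper notes, is the opposite of O'Neill's), so the signs come out as in the statement.
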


Note that we adopt the different sign convention of the curvature tensor from \cite[Lemma 3.35]{O'Neill}.

\subsubsection{The geodesics}
Next, we consider the geodesics of the warped product. 
The geodesic equation is described as follows. 

\begin{lemma}[{\cite[Proposition 7.38]{O'Neill}}] \label{lem:wp geod}
Use the notation of Appendix \ref{app:notation}. 
A path $\gamma: J_1 \ni t \mapsto (r(t), y(t)) \in  X \times_\rho Y$, 
where $J_1 \subset \R$ is an open interval, 
is a geodesic 
if and only if 
\begin{align} \label{eq:wp geod 1}
\nabla^{r^* TX}_{\frac{d}{dt}} \dot r &= g_Y(\dot y, \dot y) (\rho \circ r) 
\cdot ({\rm grad}^{g_X} \rho) \circ r, \\
\label{eq:wp geod 2}
\nabla^{y^* TY}_{\frac{d}{dt}} \dot y &= \frac{-2}{\rho \circ r} \frac{d (\rho \circ r)}{dt} \cdot \dot y,
\end{align}
where 
$\nabla^{r^* TX}$ (resp. $\nabla^{y^* TY}$) 
is the induced connection from the Levi-Civita connection of $g_X$ (resp. $g_Y$) 
along the path $t \mapsto r(t)$ (resp. $t \mapsto y(t)$). 
\end{lemma}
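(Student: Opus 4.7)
The strategy is to derive the Levi-Civita connection of $g = g_X + \rho^2 g_Y$ via the Koszul formula, apply it to $\dot\gamma$ along the curve, and split $\nabla^g_{\dot\gamma}\dot\gamma = 0$ into its $X$- and $Y$-valued components.

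First I would establish the standard warped-product connection formulas. For $x, y \in \mathfrak{X}(X)$ and $a, b \in \mathfrak{X}(Y)$, extended trivially to $X \times Y$, the brackets $[x,a]$ vanish and the cross pairings $g(x,a)$ vanish, so the Koszul identity collapses to $\nabla^g_x y = \nabla^{g_X}_x y$, $\nabla^g_x a = \nabla^g_a x = \frac{x(\rho)}{\rho}\, a$, and $\nabla^g_a b = \nabla^{g_Y}_a b - \rho\, g_Y(a,b) \cdot {\rm grad}^{g_X}\rho$. The symmetry $\nabla^g_x a = \nabla^g_a x$ follows from torsion-freeness together with $[x,a] = 0$, and the third formula is obtained by testing the Koszul identity against a horizontal $W$ and a vertical $W$ separately; consistency with the curvature formulas in Lemma \ref{lem:wp curv} can be used as a sanity check.

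Next, writing $\dot\gamma = \dot r + \dot y$ and expanding $\nabla^g_{\dot\gamma}\dot\gamma$ using bilinearity of the induced connection on $\gamma^* T(X\times Y)$, I expect an $X$-valued piece
\[
\nabla^{r^*TX}_{d/dt}\dot r \; - \; (\rho \circ r)\, g_Y(\dot y, \dot y)\, ({\rm grad}^{g_X}\rho) \circ r
\]
coming from $\nabla^g_{\dot r}\dot r$ together with the $X$-valued correction in $\nabla^g_{\dot y}\dot y$, and a $Y$-valued piece
\[
\nabla^{y^*TY}_{d/dt}\dot y \; + \; \frac{2}{\rho\circ r}\frac{d(\rho\circ r)}{dt}\,\dot y
\]
coming from $\nabla^{g_Y}_{\dot y}\dot y$ together with the two equal mixed contributions $\nabla^g_{\dot r}\dot y = \nabla^g_{\dot y}\dot r = \frac{1}{\rho\circ r}\frac{d(\rho\circ r)}{dt}\dot y$. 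Setting each component to zero yields exactly equations \eqref{eq:wp geod 1} and \eqref{eq:wp geod 2}.

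The main technical subtlety is that the Koszul formulas as stated require global vector fields on $X \times Y$, whereas $\dot r(t), \dot y(t)$ are merely sections of the pullback bundle along $\gamma$. I would handle this in the standard way by working in product coordinates $(x^i, y^\mu)$ on $X \times Y$, in which the Christoffel symbols of $g$ decompose into precisely the three families above and the geodesic ODE separates automatically into horizontal and vertical parts; the induced-connection expressions $\nabla^{r^*TX}_{d/dt}\dot r$ and $\nabla^{y^*TY}_{d/dt}\dot y$ are then read off directly. Equivalently one may locally extend $\dot r, \dot y$ to product-type vector fields and invoke independence of the induced connection on the chosen extension. Everything else is a routine bookkeeping exercise, and I would simply cite \cite[Proposition 7.38]{O'Neill} for the details.
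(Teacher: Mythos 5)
Your proposal is correct: the warped-product connection formulas you write down are the standard ones, and splitting $\nabla^g_{\dot\gamma}\dot\gamma=0$ into its $X$- and $Y$-components with the factor of $2$ from the two equal mixed terms reproduces (\ref{eq:wp geod 1}) and (\ref{eq:wp geod 2}) exactly. The paper offers no proof of its own but simply cites \cite[Proposition 7.38]{O'Neill}, and your argument is precisely the one given there, so there is nothing to compare beyond noting agreement.
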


Note that (\ref{eq:wp geod 2}) implies that $t \mapsto y(t)$ is a pregeodesic in $Y$. 
That is, a reparametrization of $y$ is a geodesic 
(\cite[Remark 7.39]{O'Neill}).

We rewrite the geodesic equations. We first prove the following.

\begin{lemma} \label{lem:geod repara}
For any path $\hat y:J_2 \rightarrow Y$ and a smooth map $\theta:J_1 \rightarrow J_2$, 
where $J_1, J_2 \subset \R$ are open intervals, 
we have 
$$
\nabla^{(\hat y \circ \theta)^* TY}_{\frac{d}{d t}} \frac{d(\hat y \circ \theta)}{dt}
=
\frac{d^2 \theta}{d t^2} \cdot \left(\frac{d \hat y}{d s} \circ \theta \right) 
+
\left( \frac{d \theta}{d t} \right)^2 \cdot \left( \nabla^{\hat y^* TY}_{\frac{d}{d s}} \frac{d \hat y}{d s} \right) \circ \theta, 
$$
where $\nabla^{(\hat y \circ \theta)^* TY}$ is the induced connection from the Levi-Civita connection 
of $g_Y$ along the path $s \mapsto (\hat y \circ \theta) (s)$. 
\end{lemma}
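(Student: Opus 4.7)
The plan is to apply the chain rule to expand the velocity of the composite path, and then invoke the Leibniz rule for the pullback connection together with the naturality of pullback connections under further reparametrization.

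First I would write, by the chain rule,
\begin{equation*}
\frac{d(\hat y \circ \theta)}{dt} = \frac{d\theta}{dt} \cdot \left(\frac{d\hat y}{ds} \circ \theta\right),
\end{equation*}
viewing both sides as sections of $(\hat y \circ \theta)^* TY$ over $J_1$. The scalar factor $\frac{d\theta}{dt}$ is a smooth function on $J_1$, and $\frac{d\hat y}{ds} \circ \theta$ is a section of $(\hat y \circ \theta)^* TY$ obtained by pulling back the section $\frac{d\hat y}{ds}$ of $\hat y^* TY$ along $\theta$.

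Next I would apply $\nabla^{(\hat y \circ \theta)^* TY}_{d/dt}$ to this product and use the Leibniz rule:
\begin{equation*}
\nabla^{(\hat y \circ \theta)^* TY}_{\frac{d}{dt}} \frac{d(\hat y \circ \theta)}{dt} = \frac{d^2\theta}{dt^2} \cdot \left(\frac{d\hat y}{ds} \circ \theta\right) + \frac{d\theta}{dt} \cdot \nabla^{(\hat y \circ \theta)^* TY}_{\frac{d}{dt}}\left(\frac{d\hat y}{ds} \circ \theta\right).
\end{equation*}
The remaining step is to compute the last covariant derivative. Here I would invoke the standard naturality property of pullback connections: for any section $V$ of $\hat y^* TY$ along $\hat y$, the pulled-back section $V \circ \theta$ of $(\hat y \circ \theta)^* TY$ satisfies
\begin{equation*}
\nabla^{(\hat y \circ \theta)^* TY}_{\frac{d}{dt}}(V \circ \theta) = \frac{d\theta}{dt} \cdot \left(\nabla^{\hat y^* TY}_{\frac{d}{ds}} V\right) \circ \theta.
\end{equation*}
Taking $V = d\hat y/ds$ and substituting into the Leibniz expansion yields the claimed identity.

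The only nontrivial point is the naturality formula for pullback connections under composition, which follows from the definition of the pullback connection in local coordinates. In coordinates $(y^i)$ on $Y$ with Christoffel symbols $\Gamma^k_{ij}$ of $g_Y$, if $V = V^i \partial_i \circ \hat y$ then $(\nabla^{\hat y^* TY}_{d/ds} V)^k = \frac{dV^k}{ds} + \Gamma^k_{ij}(\hat y) \frac{d\hat y^i}{ds} V^j$; applying $\theta$ and using the chain rule $\frac{d(V^k \circ \theta)}{dt} = \theta'(t) \frac{dV^k}{ds}\circ \theta$ gives the formula immediately. I expect this to be the only step requiring any care; the rest is mechanical.
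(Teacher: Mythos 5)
Your proposal is correct and follows essentially the same route as the paper: chain rule, Leibniz rule for the pullback connection, and then the naturality/composition property of the induced connection (which the paper states as $\nabla^{\theta^* \hat y^* TY}_{\frac{d}{dt}}\bigl(\frac{d\hat y}{ds}\circ\theta\bigr)=\bigl(\nabla^{\hat y^* TY}_{\frac{d\theta}{dt}}\frac{d\hat y}{ds}\bigr)\circ\theta$, the same identity you invoke). Your added coordinate verification of that naturality step is a harmless elaboration of what the paper simply cites as the definition of the covariant derivative along a map.
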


\begin{proof}
Since 
$
\frac{d(\hat y \circ \theta)}{d t} = \frac{d \theta}{d t} \cdot \left ( \frac{d \hat y}{d s} \circ \theta \right), 
$
we have 
$$
\nabla^{(\hat y \circ \theta)^* TY}_{\frac{d}{d t}} \frac{d(\hat y \circ \theta)}{dt}
=
\frac{d^2 \theta}{d t^2} \cdot \left ( \frac{d \hat y}{d s} \circ \theta \right)
+ 
\frac{d \theta}{d t} \cdot \nabla^{\theta^* \hat y^* TY}_{\frac{d}{dt}} 
\left ( \frac{d \hat y}{d s} \circ \theta \right). 
$$
By the definition of the covariant derivative along the map, we have 
$\nabla^{\theta^* \hat y^* TY}_{\frac{d}{dt}} 
\left ( \frac{d \hat y}{d s} \circ \theta \right)
=
\left( \nabla^{\hat y^* TY}_{\frac{d \theta}{dt}} \frac{d \hat y}{d s} \right) \circ \theta$, 
which gives the proof. 
\end{proof}

Now we rewrite geodesic equations as follows.  

\begin{proposition} \label{prop:geod re}
The geodesic $\gamma: (-\epsilon, \epsilon) \ni t \mapsto (r(t), y(t)) \in X \times_\r Y$ 
with the initial position $(r_0, y_0) \in X \times Y$ and 
the initial velocity $(\dot r_0, \dot y_0) \in T_{r_0} X \times T_{y_0} Y$ is given as follows.

\begin{enumerate}
\renewcommand{\labelenumi}{(\arabic{enumi})}
\item 
The map $r(t)$ is given by the solution of 
\begin{align} \label{eq:wp geod 3}
\nabla^{r^* TX}_{\frac{d}{dt}} \dot r &= \frac{E_1}{(\r \circ r)^3} \cdot ({\rm grad}^{g_X} \rho) \circ r, \\
r(0) &=r_0, \qquad \dot r(0) = \dot r_0, \nonumber
\end{align}
where $E_1=g_Y (\dot y_0, \dot y_0) (\r (r_0))^4$. 
\item
The map $y(t)$ is given by 
\begin{align}\label{eq:wp geod 4}
y(t)=\hat y \left( \int^t_0 \frac{E_3}{(\r \circ r(\tau))^2} d \tau \right),
\end{align}
where $\hat y (s)$ is the geodesic in $(Y, g_Y)$ 
with the initial position $y_0 \in Y$ and 
the initial velocity $\dot y_0 \in T_{y_0} Y$, 
and $E_3 = \r (r_0)^2.$
\end{enumerate}
\end{proposition}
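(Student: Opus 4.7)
The plan is to derive both formulas from the two geodesic equations in Lemma \ref{lem:wp geod}, starting from the $Y$-equation which is essentially decoupled from $X$ after noting that $y(t)$ is only a pregeodesic.

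First I would handle part (2). Since (\ref{eq:wp geod 2}) from Lemma \ref{lem:wp geod} implies that $y$ is a pregeodesic, I would write $y(t) = \hat{y}(\theta(t))$ where $\hat{y}: J_2 \to Y$ is the honest geodesic in $(Y, g_Y)$ with $\hat{y}(0) = y_0$ and $(d\hat{y}/ds)(0) = \dot{y}_0$, and $\theta: (-\epsilon, \epsilon) \to J_2$ is a reparametrization to be determined with $\theta(0) = 0$. Applying Lemma \ref{lem:geod repara} and using $\nabla^{\hat{y}^*TY}_{d/ds}(d\hat{y}/ds) = 0$, the left-hand side of (\ref{eq:wp geod 2}) reduces to $\ddot{\theta} \cdot (d\hat{y}/ds \circ \theta)$, while $\dot{y} = \dot{\theta} \cdot (d\hat{y}/ds \circ \theta)$. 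Comparing both sides of (\ref{eq:wp geod 2}) and cancelling the nonvanishing factor $d\hat{y}/ds \circ \theta$ yields the scalar ODE
\begin{equation*}
\frac{\ddot{\theta}}{\dot{\theta}} = -\frac{2}{\rho \circ r}\frac{d(\rho \circ r)}{dt} = -2\,\frac{d}{dt}\log(\rho \circ r).
\end{equation*}
Integrating this and using $\dot{\theta}(0) = 1$ (forced by $\dot{y}(0) = \dot{y}_0$ together with $(d\hat{y}/ds)(0) = \dot{y}_0$), I obtain $\dot{\theta}(t) = (\rho(r_0))^2/(\rho \circ r(t))^2 = E_3/(\rho \circ r(t))^2$. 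A further integration with $\theta(0) = 0$ gives exactly the formula (\ref{eq:wp geod 4}).

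Next I would substitute back into part (1). Since $\hat{y}$ is a geodesic, $g_Y(d\hat{y}/ds, d\hat{y}/ds)$ is constant along $\hat{y}$ and therefore equal to $g_Y(\dot{y}_0, \dot{y}_0)$. Combined with $\dot{y} = \dot{\theta} \cdot (d\hat{y}/ds \circ \theta)$ and the formula for $\dot{\theta}$ from the previous step, this gives
\begin{equation*}
g_Y(\dot{y}, \dot{y}) = \dot{\theta}^2 \cdot g_Y(\dot{y}_0, \dot{y}_0) = \frac{g_Y(\dot{y}_0, \dot{y}_0)(\rho(r_0))^4}{(\rho \circ r)^4} = \frac{E_1}{(\rho \circ r)^4}.
\end{equation*}
Inserting this into equation (\ref{eq:wp geod 1}) from Lemma \ref{lem:wp geod} and absorbing one factor of $\rho \circ r$ produces equation (\ref{eq:wp geod 3}) with initial data $r(0) = r_0$, $\dot{r}(0) = \dot{r}_0$ read off from the initial conditions for $\gamma$.

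The argument is essentially bookkeeping; the only real step that requires thought is the use of Lemma \ref{lem:geod repara} together with the constancy of $g_Y(d\hat{y}/ds, d\hat{y}/ds)$ along a geodesic, which is what decouples the $X$-equation from any explicit $y$-dependence. There is no genuine obstacle, provided one is careful with the initial normalization $\dot{\theta}(0) = 1$ so that $E_3 = \rho(r_0)^2$ and $E_1 = g_Y(\dot{y}_0, \dot{y}_0)(\rho(r_0))^4$ emerge with the correct powers of $\rho(r_0)$.
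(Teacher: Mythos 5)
Your proposal is correct, and it uses the same two ingredients as the paper (Lemma \ref{lem:geod repara} and the constancy of $g_Y(\tfrac{d\hat y}{ds},\tfrac{d\hat y}{ds})$ along the geodesic $\hat y$), but it runs the logic in the opposite direction: you \emph{derive} the formulas from the geodesic equations, whereas the paper \emph{verifies} that the stated $(r(t),y(t))$ satisfies (\ref{eq:wp geod 1}) and (\ref{eq:wp geod 2}) with the right initial data and lets uniqueness of geodesics do the rest. Your forward derivation is more illuminating about where $E_1$ and $E_3$ come from, and it makes explicit that $g_Y(\dot y,\dot y)(\rho\circ r)^4$ is conserved before you ever touch the $X$-equation, which is exactly what decouples (\ref{eq:wp geod 1}). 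The price is two small extra obligations that the verification route avoids: (i) you must justify the ansatz $y=\hat y\circ\theta$ with this particular $\hat y$, which rests on the pregeodesic remark following Lemma \ref{lem:wp geod} (citing O'Neill) together with the normalization $\dot\theta(0)=1$; and (ii) your cancellation of the factor $\tfrac{d\hat y}{ds}\circ\theta$ silently assumes $\dot y_0\neq 0$ --- when $\dot y_0=0$ the curve $y$ is constant and the formulas hold trivially, but you should say so, since the scalar ODE for $\theta$ is not forced in that case. Neither point is a genuine gap; both are one-line patches.
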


\begin{proof}
It is easy to see that $(r(t),y(t))$ given above satisfies 
$(r(0),y(0))=(r_0,y_0)$ and $(\dot r(0), \dot y (0)) = (\dot r_0, \dot y_0)$. 
We show that $(r(t),y(t))$ satisfies (\ref{eq:wp geod 1}) and (\ref{eq:wp geod 2}). 
Setting 
$$
\theta (t)= \int^t_0 \frac{E_3}{(\r \circ r(\tau))^2} d \tau,
$$
we have $y=\hat y \circ \theta$. 
Then since 
$
\dot y = \frac{dy}{dt}= \frac{E_3}{(\r \circ r)^2} \frac{d \hat y}{ds} \circ \theta,
$
it follows that 
$$
g_Y (\dot y, \dot y) (\r \circ r)^4 = E_3^2 \cdot 
g_Y \left(\frac{d \hat y}{ds} \circ \theta, \frac{d \hat y}{ds}  \circ \theta \right).
$$
Since $\hat y$ is geodesic, 
$g_Y \left(\frac{d \hat y}{ds}  \circ \theta, \frac{d \hat y}{ds} \circ \theta \right)$ 
is constant. 
Thus $g_Y (\dot y, \dot y) (\r \circ r)^4$ is constant, which is equal to $E_1$. 
Then (\ref{eq:wp geod 1}) is immediate from (\ref{eq:wp geod 3}).

Next, we show that $y(t)$ satisfies (\ref{eq:wp geod 2}). 
Lemma \ref{lem:geod repara} implies that 
$$
\nabla^{y^* TY}_{\frac{d}{d t}} \frac{d y}{dt}
=
\frac{d^2 \theta}{d t^2} \cdot \left(\frac{d \hat y}{d s} \circ \theta \right) 
+
\left( \frac{d \theta}{d t} \right)^2 \cdot \left( \nabla^{\hat y^* TY}_{\frac{d}{d s}} \frac{d \hat y}{d s} \right) \circ \theta.
$$
Since $\hat y$ is a geodesic, we have $\nabla^{\hat y^* TY}_{\frac{d}{d s}} \frac{d \hat y}{d s}=0$.
Since 
$
\frac{dy}{dt}= \frac{E_3}{(\r \circ r)^2} \frac{d \hat y}{ds} \circ \theta,
$
we have 
$
\frac{d \hat y}{ds} \circ \theta = 
\frac{(\r \circ r)^2}{E_3} \frac{dy}{dt}.
$
We also compute 
$
\frac{d^2 \theta}{d t^2}= \frac{d}{dt} \left( \frac{E_3}{(\r \circ r)^2} \right)
= \frac{-2 E_3}{(\r \circ r)^3} \frac{d(\r \circ r)}{dt}.
$
Then these equations imply (\ref{eq:wp geod 2}). 
\end{proof}

\subsection{The case $\dim X=1$}

In this subsection, 
we show more detailed descriptions of the curvature tensor and the geodesic equations 
when $X$ is 1-dimensional. 
That is, supposing that $(X,g_X) = (I, \xi d r^2)$, 
where $I \subset \mathbb{R}$ is an open interval, 
$r$ is a coordinate on $I$  
and $\xi= \xi (r)$ is a nowhere vanishing function on $I$, 
we consider the warped product 
$$X \times_\r Y = I \times_\r Y = \left(I \times Y, g = \xi d r^2 + \r^2 g_Y \right). $$

\subsubsection{The curvature tensor}

\begin{lemma} \label{lem:wpsc}
Use the notation of Appendix \ref{app:notation}. 
Set $\p_r = \p/\p r$. We have 
for linearly independent $a,b \in T_y Y$ for $y \in Y$
\begin{align}
g(R^g (\p_r, a)b, \p_r) &= g(a,b) \cdot \frac{- 2 \r'' \xi + \r' \xi'}{2 \r \xi}, 
\label{eq:wpsc1}
\\
K^g (\p_r, a) &= \frac{- 2 \r'' \xi + \r' \xi'}{2 \r \xi^2}, \label{eq:wpsc2}\\
K^g (a, b) &= \frac{1}{\rho^2} \left( K^{g_Y}(a, b) - \frac{(\rho')^2}{\xi} \right). \label{eq:wpsc3}
\end{align}
\end{lemma}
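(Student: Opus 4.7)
The plan is to apply Lemma \ref{lem:wp curv} directly, specialized to the one-dimensional base $(X, g_X) = (I, \xi(r) dr^2)$. Under that specialization, each curvature quantity in the statement involves $\partial_r$ at most once as a base direction, so the only three ingredients from the base I will need are ${\rm grad}^{g_X} \rho$, the Hessian value $(\nabla^{g_X} d\rho)(\partial_r, \partial_r)$, and the squared norm $g_X({\rm grad}^{g_X}\rho, {\rm grad}^{g_X}\rho)$; these feed into the three separate clauses of Lemma \ref{lem:wp curv}.

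First I would compute the base data. From $g_X(\partial_r, \partial_r) = \xi$ one reads off ${\rm grad}^{g_X}\rho = (\rho'/\xi)\partial_r$ and $g_X({\rm grad}^{g_X}\rho, {\rm grad}^{g_X}\rho) = (\rho')^2/\xi$; the single Christoffel symbol $\Gamma^r_{rr} = \xi'/(2\xi)$ of $\xi\, dr^2$ then gives $(\nabla^{g_X} d\rho)(\partial_r, \partial_r) = (2\rho''\xi - \rho'\xi')/(2\xi)$.

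For (\ref{eq:wpsc1}) I would rewrite $g(R^g(\partial_r, a) b, \partial_r) = g(R^g(b, \partial_r)\partial_r, a)$ using the algebraic symmetry of the curvature tensor and then invoke the second clause of Lemma \ref{lem:wp curv} with $(a, x, y) = (b, \partial_r, \partial_r)$, so that substituting the Hessian value above immediately produces the claimed expression. For (\ref{eq:wpsc2}) I would divide (\ref{eq:wpsc1}) by the Gram determinant $g(\partial_r, \partial_r)\, g(a, a) - g(\partial_r, a)^2 = \xi\cdot g(a, a)$, using that $\partial_r$ is $g$-orthogonal to every $Y$-field; the $g(a,a)$ factor cancels and leaves the stated ratio. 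For (\ref{eq:wpsc3}) I would apply the last clause of Lemma \ref{lem:wp curv} to $g(R^g(a, b) b, a)$, convert every $g$-inner product of $Y$-fields into a $g_Y$-one by pulling out a factor of $\rho^2$, and divide through by the corresponding Gram determinant $\rho^4 (g_Y(a, a) g_Y(b, b) - g_Y(a, b)^2)$; after cancellation the desired identity appears.

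There is no real conceptual obstacle here: the proof reduces to careful bookkeeping on top of Lemma \ref{lem:wp curv}. The main pitfall to watch for is the sign convention on $R^g$ (which, as the author has already warned, differs from \cite[Lemma 3.35]{O'Neill}), together with applying the algebraic symmetry $g(R^g(x, y) z, w) = g(R^g(z, w) x, y)$ consistently with that sign choice.
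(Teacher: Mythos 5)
Your proposal is correct and follows essentially the same route as the paper: specialize Lemma \ref{lem:wp curv} to the one-dimensional base, compute $\nabla^{g_X}_{\p_r}\p_r = \frac{\xi'}{2\xi}\p_r$, ${\rm grad}^{g_X}\r = (\r'/\xi)\p_r$ and the resulting Hessian/norm values, and divide by the Gram determinants. The only cosmetic difference is that for (\ref{eq:wpsc1}) the paper applies the clause $R^g(x,a)b = -\frac{g(a,b)}{\r}\nabla^{g_X}_x({\rm grad}^{g_X}\r)$ and pairs with $\p_r$, whereas you reach the same Hessian quantity via the clause $R^g(a,x)y$ together with the pair symmetry of the curvature tensor; both are equivalent and your signs check out.
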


Note that the second equation is independent of $a \in T_y Y$. 
Hence we define a function $K^g(\p_r)$ on $I$ by 
\begin{align} \label{eq:def Kpr}
K^g(\p_r) = K^g (\p_r, a) = \frac{- 2 \r'' \xi + \r' \xi'}{2 \r \xi^2}. 
\end{align}

\begin{proof}
By Lemma \ref{lem:wp curv}, we compute 
\begin{align*}
g (R^g (\p_r,a)b, \p_r)
&=
-\frac{g(a,b)}{\r} \cdot g(\nabla^{g_X}_{\p_r} {\rm grad}^{g_X} \r, \p_r)\\
&=
-\frac{g(a,b)}{\r} \cdot \left( \p_r (d \r (\p_r)) - d \r (\nabla^{g_X}_{\p_r} \p_r) \right)
=
\frac{g(a,b)}{\r} \cdot (- \r'' + d\r (\nabla^{g_X}_{\p_r} \p_r)). 
\end{align*}
By the Koszul formula, it follows that 
\begin{align} \label{eq:LC 1dim}
2 g_X (\nabla^{g_X}_{\p_r} \p_r, \p_r) = \p_r g_X (\p_r, \p_r) = \xi', \qquad
\mbox{and hence} \qquad 
\nabla^{g_X}_{\p_r} \p_r = \frac{\xi'}{2 \xi} \p_r. 
\end{align}
Thus we obtain (\ref{eq:wpsc1}). This also implies (\ref{eq:wpsc2}). 

To prove (\ref{eq:wpsc3}), we use Lemma \ref{lem:wp curv} again and compute 
\begin{align*}
g (R^g (a, b)b, a)
=&
g \left(R^{g_Y} (a, b) b + \frac{g_X ({\rm grad}^{g_X} \r, {\rm grad}^{g_X} \r)}{\r^2} (g(a,b)b-g(b,b)a), a \right)\\
=& 
\rho^2 g_Y (R^{g_Y} (a, b) b, a) + \frac{(\rho')^2}{\rho^2 \xi} \left(g(a,b)^2 - g(a,a) g(b,b) \right),
\end{align*}
where we use ${\rm grad}^{g_X} \r = (\r'/\xi) \p_r$.
Using 
$$
g(a,a) g(b,b) -g(a,b)^2 = \rho^4 (g_Y (a,a) g_Y (b,b) -g_Y (a,b)^2), 
$$
we obtain (\ref{eq:wpsc3}). 
\end{proof}

From these formulae, the sectional curvature for general two vectors in $T(I \times Y)$ 
are computed as follows. 

\begin{lemma} \label{lem:wpgsc}
Take any linearly independent $A=k_1 \p_r + a$ and $B=k_2 \p_r + b$, where 
$k_1, k_2 \in \R$ and $a,b \in T_y Y$ for $y \in Y$. 
If $a$ and $b$ are linearly independent, we have 
\begin{align*} 
K^g(A, B) = \frac{K^g (\p_r) \xi g(k_2 a -k_1 b, k_2 a -k_1 b) + K^g (a,b) \left(g(a,a) g(b,b) - g(a,b)^2 \right)}{\xi g(k_2 a -k_1 b, k_2 a -k_1 b) + g(a,a) g(b,b) - g(a,b)^2}.
\end{align*}
If $a$ and $b$ are linearly dependent, we have 
$K^g(A, B) = K^g (\p_r)$. 
\end{lemma}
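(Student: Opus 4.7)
The plan is to compute $g(R^g(A,B)B, A)$ and $g(A,A)g(B,B) - g(A,B)^2$ separately and then divide. The orthogonality $g(\p_r, a) = g(\p_r, b) = 0$, which follows immediately from $g = \xi\, dr^2 + \rho^2 g_Y$, makes the denominator simplify cleanly to
\[
g(A,A)g(B,B) - g(A,B)^2 = \xi\, g(k_2 a - k_1 b, k_2 a - k_1 b) + g(a,a)g(b,b) - g(a,b)^2,
\]
which is exactly the denominator appearing in the claimed formula.

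For the numerator, I would expand $R^g(A,B) = k_1 R^g(\p_r, b) + k_2 R^g(a, \p_r) + R^g(a,b)$ (the $R^g(\p_r, \p_r)$ piece vanishes by antisymmetry), then apply this operator to $B = k_2 \p_r + b$ and evaluate the six resulting summands using Lemma \ref{lem:wp curv}. Each output lies either along $\p_r$ (a multiple of $\nabla^{g_X}_{\p_r} {\rm grad}^{g_X}\rho$) or along $Y$ (a combination of $a$, $b$, and $R^{g_Y}(a,b)b$). Using ${\rm grad}^{g_X}\rho = (\rho'/\xi)\p_r$ together with (\ref{eq:LC 1dim}), the two auxiliary quantities $\nabla^{g_X} d\rho(\p_r, \p_r)/\rho$ and $\nabla^{g_X}_{\p_r} {\rm grad}^{g_X}\rho$ simplify to $-\xi K^g(\p_r)$ and $-\rho K^g(\p_r)\,\p_r$ respectively.

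Taking the inner product with $A = k_1 \p_r + a$, only $k_1 \p_r$ pairs with the $\p_r$-part of $R^g(A,B)B$ and only $a$ pairs with the $Y$-part. The $\p_r$-contributions assemble into $K^g(\p_r)\xi[k_1^2 g(b,b) - 2k_1 k_2 g(a,b) + k_2^2 g(a,a)] = K^g(\p_r)\xi\, g(k_2 a - k_1 b, k_2 a - k_1 b)$. The $Y$-contributions, coming from $R^{g_Y}(a,b)b$ and the correction term $\frac{g_X({\rm grad}^{g_X}\rho, {\rm grad}^{g_X}\rho)}{\rho^2}(g(a,b)b - g(b,b)a)$, combine via the rearrangement $K^{g_Y}(a,b)/\rho^2 = K^g(a,b) + (\rho')^2/(\rho^2 \xi)$ of (\ref{eq:wpsc3}) into $K^g(a,b)(g(a,a)g(b,b) - g(a,b)^2)$. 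Dividing numerator by denominator yields the first formula.

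When $a, b$ are linearly dependent, $g(a,a)g(b,b) - g(a,b)^2 = 0$, so the second summands in both numerator and denominator drop out, and the common factor $g(k_2 a - k_1 b, k_2 a - k_1 b)$ cancels to leave $K^g(A,B) = K^g(\p_r)$. The main obstacle is purely bookkeeping: carefully tracking the six cross terms and the signs introduced by the curvature antisymmetries. No conceptual difficulty is expected.
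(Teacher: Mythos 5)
Your proposal follows essentially the same route as the paper: expand $R^g(A,B,B,A)$ by multilinearity using the vanishing of $R^g(\p_r,a_1,a_2,a_3)$, identify the $\p_r$-contributions with $K^g(\p_r)\xi\, g(k_2 a - k_1 b, k_2 a - k_1 b)$ via (\ref{eq:wpsc1}), identify the tangential part with $K^g(a,b)\left(g(a,a)g(b,b)-g(a,b)^2\right)$, and compute the denominator from the orthogonal splitting. The computation and the treatment of the degenerate case $a \parallel b$ match the paper's argument, so the proposal is correct.
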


Thus $K^g$ is essentially controlled by $K^g (\p_r)$ and $K^g|_{TY \times TY}$.

\begin{proof}
For simplicity, we write 
$R^g (A_1,A_2, A_3, A_4) = g (R^g (A_1, A_2) A_3, A_4)$ 
for $A_1, \cdots, A_4 \in T(I \times Y)$. 
Then 
by Lemma \ref{lem:wp curv}, we have 
$$
R^g(\p_r, a_1, a_2, a_3) = 0
$$
for $a_1, a_2, a_3 \in T_y Y$. 
Using this, we compute 
\begin{align*}
&R^g (A,B,B,A)\\
=& 
R^g (A,B, k_2 \p_r, a) + R^g (A,B, b, k_1 \p_r) + R^g (A,B, b, a)\\
=&
- 2 k_1 k_2 R^g (\p_r, a, b,\p_r)
+ k_2^2 R^g (a, \p_r, \p_r, a)
+ k_1^2 R^g (\p_r, b, b, \p_r)
+ R^g (a,b,b,a).
\end{align*}
By (\ref{eq:wpsc1}), it follows that 
$$
R^g (\p_r, a, b,\p_r) = 
\frac{- 2 \r'' \xi + \r' \xi'}{2 \r \xi} g(a, b)
= K^g (\p_r) \xi g(a, b). 
$$
Then we have 
$$
R^g (A,B,B,A) = 
K^g (\p_r) \xi g(k_2 a -k_1 b, k_2 a -k_1 b) 
+ R^g (a,b,b,a). 
$$
On the other hand, we have
$
g (A,B) = k_1 k_2 \xi + g (a,b).
$
Hence 
\begin{align*}
&g(A,A) g(B,B) -g(A, B)^2 \\
=& (k_1^2 \xi + g (a,a)) \cdot (k_2^2 \xi + g (b,b)) - (k_1 k_2 \xi + g (a,b))^2\\
=&
\xi \left(
- 2 k_1 k_2 g (a,b) + k_2^2 g(a,a) + k_1^2 g(b, b) \right) + g(a,a) g(b,b) - g(a, b)^2\\
=&
\xi g(k_2 a -k_1 b, k_2 a -k_1 b) + g(a,a) g(b,b) - g(a,b)^2.
\end{align*}
Thus since $R^g (a,b,b,a) = K^g (a,b) \left(g(a,a) g(b,b) - g(a,b)^2 \right)$ 
if $a$ and $b$ are linearly independent, 
the proof is done. 
\end{proof}

In this setting, we can characterize the warped product with constant sectional curvature as follows. 
The following statements are obvious from Lemma \ref{lem:wpgsc}. 

\begin{corollary} \label{cor:wpcsc}
For $C \in \R$, $K^g = C$ if and only if 
\begin{align} \label{eq:wpcsc}
K^g (\p_r) = \frac{- 2 \r'' \xi + \r' \xi'}{2 \r \xi^2} = C, \qquad K^g (a,b) = C 
\end{align}
for any linearly independent $a,b \in TY$.
\end{corollary}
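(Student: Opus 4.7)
The plan is to read both directions of the equivalence directly off the general sectional curvature formula in Lemma \ref{lem:wpgsc}.

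For the ``only if'' direction, I would specialize the 2-plane. First, applying $K^g = C$ to the plane spanned by $\p_r$ and any nonzero $a \in T_y Y$ (which corresponds to $k_1 = 1$, $k_2 = 0$, $b = 0$ in the notation of Lemma \ref{lem:wpgsc}, after rescaling) gives $K^g(\p_r, a) = C$, which by definition (\ref{eq:def Kpr}) is exactly the identity $K^g(\p_r) = C$. Second, applying $K^g = C$ to the plane spanned by linearly independent $a, b \in T_y Y$ (i.e.\ $k_1 = k_2 = 0$) gives $K^g(a, b) = C$ for all such pairs.

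For the ``if'' direction, I would substitute the two assumed values into the formula of Lemma \ref{lem:wpgsc} for arbitrary linearly independent $A = k_1 \p_r + a$ and $B = k_2 \p_r + b$. When $a, b \in T_y Y$ are linearly independent, each of the two summands in the numerator becomes $C$ times the corresponding summand in the denominator, so the entire ratio collapses to $C$. When $a, b$ are linearly dependent, Lemma \ref{lem:wpgsc} yields $K^g(A, B) = K^g(\p_r) = C$ at once.

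I do not expect any real obstacle here: every non-degenerate 2-plane in $T(I \times Y)$ admits a basis of the form $\{k_1 \p_r + a,\, k_2 \p_r + b\}$ with $a, b \in T_y Y$, so the two cases of Lemma \ref{lem:wpgsc} exhaust the situation and the corollary follows by a direct substitution argument with no further curvature computations required.
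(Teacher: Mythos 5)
Your proof is correct and is exactly the argument the paper intends: the paper simply states that the corollary is ``obvious from Lemma \ref{lem:wpgsc}'', and your two specializations (planes containing $\p_r$, and planes inside $TY$) together with the substitution for the converse are precisely how that lemma yields the statement.
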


\begin{remark}
We see that $K^g =C$ implies that $K^{g_Y}$ is constant. 
In fact, we compute 
\begin{align*}
\frac{d}{d r} \left( C \r^2 + \frac{(\r')^2}{\xi} \right)
=
2C \r \r' + \frac{2 \r' \r''}{\xi} - \frac{(\r')^2 \xi'}{\xi^2}
=
2 \r \r' \left( C - K^g (\p_r) \right)
=0.
\end{align*}
Then (\ref{eq:wpsc3}) implies that $K^{g_Y}$ is constant. 
\end{remark}

Lemma \ref{lem:wpgsc} also yields the following estimates. 
Recall that $\xi g(k_2 a -k_1 b, k_2 a -k_1 b) \geq 0$ and 
$g(a,a) g(b,b) - g(a,b)^2 \geq 0$ 
when $g= \xi (r) dr^2 + \r^2 g_Y$ is definite in the sense of Definition \ref{def:def}.

\begin{corollary} \label{cor:sc bound}
If $g$ is definite in the sense of Definition \ref{def:def}, we have 
$$
\inf_{r \in I, \atop \{a,b\} \in Gr_2(TY)} 
\min \left \{K^g(\p_r), K^g (a,b) \right \}
\leq 
K^g 
\leq
\sup_{r \in I, \atop \{a,b\} \in Gr_2 (TY)}
\max \left \{ K^g(\p_r), K^g (a,b) \right \}, 
$$
where $Gr_2(TY)$ is the  2-Grassmannian bundle over $Y$ 
and $\{a,b\}$ stands for the vector subspace spanned by $a, b \in TY$. 
\end{corollary}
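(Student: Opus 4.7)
The plan is to read off the bound directly from the formula in Lemma \ref{lem:wpgsc}, exploiting the fact that the coefficients in the numerator are nonnegative under the definiteness hypothesis, so that $K^g(A,B)$ becomes a convex combination of $K^g(\partial_r)$ and $K^g(a,b)$.

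First I would dispose of the trivial case. If the $Y$-components $a, b$ of $A = k_1 \partial_r + a$ and $B = k_2 \partial_r + b$ are linearly dependent, Lemma \ref{lem:wpgsc} gives $K^g(A,B) = K^g(\partial_r)$, which plainly lies between the asserted infimum and supremum. So the content lies in the case where $a,b$ are linearly independent.

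Next I would set
\begin{align*}
\mu_1 &= \xi\, g(k_2 a - k_1 b,\, k_2 a - k_1 b), \\
\mu_2 &= g(a,a) g(b,b) - g(a,b)^2,
\end{align*}
and observe that the definiteness assumption on $g = \xi(r) dr^2 + \rho^2 g_Y$ forces $\xi$ to have constant sign and $g_Y$ (hence $\rho^2 g_Y$) to be definite; in either case of signature, the Cauchy--Schwarz-type inequality gives $\mu_1 \geq 0$ and $\mu_2 \geq 0$. Moreover $\mu_1 + \mu_2 > 0$ since $A, B$ are linearly independent (if $\mu_1 + \mu_2 = 0$ then $k_2 a - k_1 b = 0$ and $a, b$ are dependent, contradicting our case). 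Lemma \ref{lem:wpgsc} then reads
\[
K^g(A,B) = \frac{\mu_1}{\mu_1 + \mu_2} K^g(\partial_r) + \frac{\mu_2}{\mu_1 + \mu_2} K^g(a,b),
\]
exhibiting $K^g(A,B)$ as a convex combination of the two values at the point $(r, y)$ determined by $A, B$.

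From this it is immediate that
\[
\min\bigl\{K^g(\partial_r), K^g(a,b)\bigr\} \leq K^g(A,B) \leq \max\bigl\{K^g(\partial_r), K^g(a,b)\bigr\},
\]
and taking the infimum and supremum over all $r \in I$ and all $\{a,b\} \in Gr_2(TY)$ yields the claimed two-sided bound. There is really no obstacle here: once Lemma \ref{lem:wpgsc} is in place and one notices that definiteness forces the two coefficients to be nonnegative, the corollary is an observation about convex combinations. The only thing to be careful about is the verification that $\mu_1, \mu_2 \geq 0$ regardless of the signature (positive or negative definite) of $g$, which reduces to the sign of $\xi$ and the Cauchy--Schwarz inequality for $g_Y$.
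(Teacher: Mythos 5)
Your proof is correct and is exactly the argument the paper intends: the corollary is stated as an immediate consequence of Lemma \ref{lem:wpgsc} together with the observation (made just before the corollary in the text) that $\xi\, g(k_2 a - k_1 b, k_2 a - k_1 b) \geq 0$ and $g(a,a)g(b,b) - g(a,b)^2 \geq 0$ under definiteness, so that $K^g(A,B)$ is a convex combination of $K^g(\partial_r)$ and $K^g(a,b)$. Your handling of the linearly dependent case and of the strict positivity of the denominator matches what the paper leaves implicit.
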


When $Y$ is also 1-dimensional, we can simplify Corollaries \ref{cor:wpcsc} 
and \ref{cor:sc bound}.

\begin{corollary} \label{cor:2dim csc}
Suppose further that $(Y,g_Y)$ is also 1-dimensional. 
Then $K^g = C \in \mathbb{R}$
if and only if 
\begin{align} \label{eq:2dim csc}
K^g (\p_r) = \frac{- 2 \r'' \xi + \r' \xi'}{2 \r \xi^2} = C. 
\end{align}
If $g$ is definite in the sense of Definition \ref{def:def}, we have 
$$
\inf_{r \in I} K^g(\p_r) \leq 
K^g 
\leq
\sup_{r \in I} K^g(\p_r). 
$$
\end{corollary}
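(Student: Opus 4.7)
The plan is to observe that when $\dim Y = 1$, the sectional curvature of $g = \xi(r)\, dr^2 + \rho^2 g_Y$ is pointwise independent of the 2-plane and in fact coincides with the scalar function $K^g(\partial_r)$ defined in \eqref{eq:def Kpr}. Once this is established, both assertions of the corollary are immediate: the equivalence $K^g = C \Leftrightarrow K^g(\partial_r) \equiv C$ is exactly \eqref{eq:2dim csc}, and the bounds follow trivially from the pointwise identity $K^g = K^g(\partial_r)$.

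To prove this pointwise identification, I would invoke Lemma \ref{lem:wpgsc} directly. Let $A = k_1 \partial_r + a$ and $B = k_2 \partial_r + b$ be any linearly independent tangent vectors at $(r,y) \in I \times Y$, with $a, b \in T_y Y$. Since $\dim T_y Y = 1$, the vectors $a$ and $b$ are automatically linearly dependent, so the second (degenerate) branch of Lemma \ref{lem:wpgsc} applies and yields $K^g(A, B) = K^g(\partial_r)$. Note that no case is lost: linear independence of $A, B$ in $T_{(r,y)}(I \times Y)$ is guaranteed by their $\partial_r$-components and does not require independence of $a, b$ in the 1-dimensional fiber.

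There is no genuine obstacle here: the corollary is a pure specialization of Lemma \ref{lem:wpgsc}. The only point worth checking is that the formula displayed in \eqref{eq:2dim csc} is literally the defining expression for $K^g(\partial_r)$ in \eqref{eq:def Kpr}, so that the condition $K^g(\partial_r) = C$ means exactly the ODE in $r$ stated in the corollary.
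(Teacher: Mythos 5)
Your argument is correct and is essentially the paper's own route: the paper obtains this corollary as the $\dim Y=1$ specialization of Lemma \ref{lem:wpgsc} (via Corollaries \ref{cor:wpcsc} and \ref{cor:sc bound}, whose conditions on $K^g(a,b)$ become vacuous when $T_yY$ is one-dimensional), which is exactly your observation that the degenerate branch $K^g(A,B)=K^g(\p_r)$ always applies.
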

Note that the condition (\ref{eq:2dim csc}) is independent of $g_Y$.

\subsubsection{The geodesics}

When $X$ is 1-dimensional, 
(\ref{eq:wp geod 3}) is described more explicitly as follows.

\begin{lemma} \label{lem:wp geod 2}
Use the notation of Proposition \ref{prop:geod re}. 
The equation (\ref{eq:wp geod 3}) holds if and only if 
\begin{align} \label{eq:wp geod 1dim 0}
\ddot r + \frac{\xi' \circ r}{2 \xi \circ r} (\dot r)^2 
-
\frac{E_1}{(\r \circ r)^3} \cdot \frac{\r' \circ r}{\xi \circ r} = 0. 
\end{align}
In particular, we have
\begin{align} \label{eq:wp geod 1dim 1}
(\xi \circ r) \cdot (\dot r)^2 + \frac{E_1}{ (\r \circ r)^2} = E_2, 
\end{align}
where $E_2= \xi (r_0)  (\dot r_0)^2 + g_Y (\dot y_0, \dot y_0) (\r(r_0))^2$. 
\end{lemma}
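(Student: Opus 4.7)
The plan is twofold: first I would prove the equivalence of (\ref{eq:wp geod 3}) and (\ref{eq:wp geod 1dim 0}) by a direct coordinate computation, and then obtain the conservation law (\ref{eq:wp geod 1dim 1}) as a first integral.

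For the equivalence, I would write $\dot r(t) = (dr/dt)\,\partial_r|_{r(t)}$ and apply the Leibniz rule for $\nabla^{r^*TX}$ together with the identity $\nabla^{g_X}_{\partial_r}\partial_r = (\xi'/2\xi)\,\partial_r$ already recorded in (\ref{eq:LC 1dim}). This yields
\[
\nabla^{r^*TX}_{\frac{d}{dt}}\dot r
= \left( \ddot r + \frac{\xi'\circ r}{2\,\xi\circ r}(\dot r)^2 \right) \partial_r.
\]
The right-hand side of (\ref{eq:wp geod 3}) equals $\dfrac{E_1}{(\rho\circ r)^3}\cdot\dfrac{\rho'\circ r}{\xi\circ r}\,\partial_r$ since $\mathrm{grad}^{g_X}\rho = (\rho'/\xi)\partial_r$. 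Matching the coefficient of $\partial_r$ gives precisely (\ref{eq:wp geod 1dim 0}), establishing the equivalence in both directions.

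For the conservation law, the cleanest argument is to recognize the left-hand side of (\ref{eq:wp geod 1dim 1}) as the squared speed $g(\dot\gamma,\dot\gamma)$ of the geodesic $\gamma=(r,y)$. Indeed,
\[
g(\dot\gamma,\dot\gamma) = \xi(r)(\dot r)^2 + (\rho\circ r)^2\, g_Y(\dot y,\dot y),
\]
and the proof of Proposition \ref{prop:geod re} already shows that $g_Y(\dot y,\dot y)(\rho\circ r)^4 = E_1$, so the second term equals $E_1/(\rho\circ r)^2$. Since geodesic speed is constant in any pseudo-Riemannian manifold, the left-hand side of (\ref{eq:wp geod 1dim 1}) is constant in $t$; evaluating at $t=0$ produces $\xi(r_0)(\dot r_0)^2 + (\rho(r_0))^2 g_Y(\dot y_0,\dot y_0) = E_2$ as defined.

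As a purely local alternative that avoids invoking the $y$-component, one can differentiate $(\xi\circ r)(\dot r)^2 + E_1/(\rho\circ r)^2$ in $t$, factor out $2\dot r$, substitute (\ref{eq:wp geod 1dim 0}) for $\ddot r$, and observe that the resulting expression collapses to zero. I do not anticipate any real obstacle: the equivalence is a short bookkeeping step using ingredients already assembled in Lemma \ref{lem:wpsc} and Proposition \ref{prop:geod re}, and the conservation law is essentially the speed-preservation of geodesics read in coordinates.
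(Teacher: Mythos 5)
Your proposal is correct, and the first half (the equivalence of (\ref{eq:wp geod 3}) and (\ref{eq:wp geod 1dim 0}) via (\ref{eq:LC 1dim}) and ${\rm grad}^{g_X}\r = (\r'/\xi)\p_r$) is exactly the paper's computation. For the conservation law the paper takes only your ``purely local alternative'': it multiplies (\ref{eq:wp geod 1dim 0}) by $2(\xi\circ r)\dot r$ and recognizes the result as $\frac{d}{dt}\bigl((\xi\circ r)(\dot r)^2 + E_1/(\r\circ r)^2\bigr)=0$. Your primary argument --- reading (\ref{eq:wp geod 1dim 1}) as constancy of the squared speed $g(\dot\gamma,\dot\gamma)$ together with the identity $g_Y(\dot y,\dot y)(\r\circ r)^4=E_1$ from Proposition \ref{prop:geod re} --- is a genuinely different and more conceptual route, and it correctly explains \emph{why} the first integral exists. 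Its one drawback is logical order: it presupposes that the full curve $(r(t),y(t))$ is a geodesic (i.e.\ that (\ref{eq:wp geod 2}) also holds), whereas in the paper the lemma is invoked in Proposition \ref{prop:geod wp explicit} to solve for $r(t)$ \emph{before} $y(t)$ is even defined, so only a derivation from the scalar ODE (\ref{eq:wp geod 1dim 0}) alone (with $E_1$ treated as a fixed constant) is available at that stage. Since you supply exactly that derivation as your fallback, the proposal stands; I would just lead with the local computation and present the speed interpretation as a remark.
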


\begin{proof}
By the identification $\dot r = \dot r \p_r \circ r$, we compute 
\begin{align}
\nabla^{r^* TX}_{\frac{d}{dt}} \dot r 
= 
\ddot r \p_r \circ r + \dot r (\nabla^{g_X}_{\dot r \p_r} \p_r ) \circ r 
\stackrel{(\ref{eq:LC 1dim})}
=
\left( \ddot r + \frac{\xi' \circ r}{2 \xi \circ r} (\dot r)^2 \right) \p_r \circ r.
\end{align}
Then by ${\rm grad}^{g_X} \r = (\r'/\xi) \p_r$, we see that (\ref{eq:wp geod 3}) is 
equivalent to (\ref{eq:wp geod 1dim 0}). 
Multiplying $2 (\xi \circ r) \cdot \dot r$ on both sides of (\ref{eq:wp geod 1dim 0}), 
we have 
$$
2 (\xi \circ r) \cdot \dot r \ddot r + (\xi' \circ r) \cdot (\dot r)^3 
- \frac{2 E_1}{(\r \circ r)^3} \cdot (\r' \circ r) \cdot \dot r = 0. 
$$
Hence 
$$
\frac{d}{dt} \left( (\xi \circ r) \cdot (\dot r)^2 + \frac{E_1}{(\r \circ r)^2} \right) =0,
$$
which gives the proof. 
\end{proof}

To solve (\ref{eq:wp geod 1dim 1}), we can use the method of separation of variables. 
However, it is hard to describe solutions explicitly in general.


\subsection{The special case of the case $\dim X=1$}

In this subsection, we assume that $I= \R_{>0}$ for simplicity and 
the pseudo-Riemannian metric $g$ is of the form (\ref{eq:def gw}).  
This assumption is useful in Section \ref{sec:hp}. 
Assuming this, we can solve many of differential equations in previous subsections explicitly 
and study the sectional curvature, the geodesics and the metric completion in more detail. 

In addition, if we set $k=1$ and $w(r)=1$ in (\ref{eq:def gw}), 
$g=g(w)$ is a cylindrical pseudo-Riemannian metric. 
If we set $k=1$ and $w(r)=r^2$ in (\ref{eq:def gw}), 
$g=g(w)$ is a conical pseudo-Riemannian metric. 
Thus this assumption also provides a framework 
for the unified treatment of these geometrically important examples.

\subsubsection{The sectional curvature}
Assuming that the sectional curvature $K^{g_Y}$ of $(Y, g_Y)$ is constant, 
we construct pseudo-Riemannian metrics of constant sectional curvature. 
We can apply this in Section \ref{subsec:csc met}. 
We begin by the following definition.

\begin{definition}\label{def:w}
Set 
\begin{align*}
\Delta_1 =& \{ (s, C_1, C_2) \in \R^3 \mid s>0, C_1>0, \ C_2 \in \R \}, \\
\Delta_2 =& \{ (s, C_1, C_2) \in \R^3 \mid s=0, C_1 \geq 0, \ C_2 \in \R \}, \\
\Delta_3 =& \{ (s, C_1, C_2) \in \R^3 \mid s<0, C_1 \leq 0, C_2 \in \R \}.
\end{align*}
Then define a function $w(s, C_1, C_2, r)$ for $(s, C_1, C_2) \in \Delta_1 \cup \Delta_2 \cup \Delta_3$ 
and (generic) $r \in \R_{>0}$ as follows. 
\begin{itemize}
\item
For $(s, C_1, C_2) \in \Delta_1$, set 
$$
w(s, C_1, C_2, r) = 
\frac{C_1}{s \cdot \left(\cosh (\sqrt{C_1} (\log r + C_2)) \right)^2}. 
$$
\item
For $(0, C_1, C_2) \in \Delta_2$, set 
$$
w(0, C_1, C_2, r) = e^{C_2} r^{\pm 2 \sqrt{C_1}}.
$$
\item
For $(s, C_1, C_2) \in \Delta_3$ with $C_1 <0$, set 
$$
w(s, C_1, C_2, r) = 
\frac{C_1}{s \cdot \left( \sin (\sqrt{- C_1} (\log r + C_2)) \right)^2}
$$
For $(s, 0, C_2) \in \Delta_3$, set 
$$
w(s, 0, C_2, r) = 
\frac{-1}{s \cdot (\log r + C_2)^2}
\left( = \lim_{C_1 \rightarrow - 0} w(s, C_1, C_2, r)
\right). 
$$
\end{itemize}
\end{definition}

\begin{proposition} \label{prop:tech}
Let $(Y, g_Y)$ be a pseudo Riemannian manifold. 
Fix $k \in \R - \{ 0 \}$. For  
a smooth function $w: \R_{>0} \rightarrow \R_{>0}$, 
define a pseudo-Riemannian metric $g=g(w)$ on $\R_{>0} \times Y$ by 
\begin{align} \label{eq:def gw}
g=g(w)= \frac{k w(r)}{r^2} d r^2+ w(r) g_Y. 
\end{align}
Defining  
functions $\xi, \r : \R_{>0} \rightarrow \R_{>0}$ by 
$$
\xi (r) = \frac{k w(r)}{r^2}, \qquad
\r (r) =  \sqrt{w(r)},  
$$
we have the following. 
\begin{enumerate}
\renewcommand{\labelenumi}{(\arabic{enumi})}
\item 
Recall (\ref{eq:def Kpr}). 
Given $C \in \R$, the differential equation 
$$
K^{g(w)} (\p_r) = \frac{-2 \r'' \xi + \r' \xi'}{2 \r \xi^2} = C 
$$
w.r.t. $w(r)$ 
has a 2 parameter family of solutions given by 
$w(r) = w (k C, C_1, C_2, r)$ for 
$(C_1, C_2) \in \R^2$ such that $(k C, C_1, C_2) \in \Delta_1 \cup \Delta_2 \cup \Delta_3$, 
where we use the notation in Definition \ref{def:w}.

\item 
For $g= g(w (k C, C_1, C_2, \cdot))$, we have 
\begin{align} \label{eq:sec ab}
K^g (a,b) = \frac{1}{w(r)} \left( K^{g_Y} (a,b) - \frac{C_1}{k} \right) + C
\end{align}
for linearly independent $a,b \in TY.$

\item 
The pseudo-Riemannian metric $g= g(w (k C, C_1, C_2, \cdot))$ has constant sectional curvature $C$ 
if and only if $g_Y$ has constant sectional curvature $C_1/k$. 
\end{enumerate}
\end{proposition}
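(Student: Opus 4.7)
The plan is to reduce the ODE in (1) to a Liouville-type equation by the logarithmic substitution $t=\log r$, $u=\log w$, to extract its first integral, to verify that each formula of Definition \ref{def:w} is a solution, and finally to deduce (2) and (3) from Lemma \ref{lem:wpsc} and Corollary \ref{cor:wpcsc}.

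For (1), I would substitute $\rho=\sqrt{w}$ and $\xi=kw/r^2$ into
\[
K^{g(w)}(\partial_r) = \frac{-2\rho''\xi + \rho'\xi'}{2\rho\xi^2}
\]
and rewrite derivatives via $rw'/w=\dot u$ and $w''=(w/r^2)(\dot u^2+\ddot u-\dot u)$. A direct computation collapses the numerator to $-kw^{3/2}\ddot u/r^4$ and the denominator to $2k^2 w^{5/2}/r^4$, yielding the clean identity $K^{g(w)}(\partial_r)=-\ddot u/(2kw)$. Hence $K^{g(w)}(\partial_r)=C$ is equivalent to the Liouville-type equation $\ddot u = -2kC\,e^u$, whose first integral (obtained by multiplying by $\dot u$ and integrating) is
\[
(\dot u)^2 = 4C_1 - 4kC\,e^u,
\]
with $4C_1$ the constant of integration. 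The sign restrictions defining $\Delta_1,\Delta_2,\Delta_3$ are exactly the compatibility conditions for this first-order relation to admit real positive $w$ in the corresponding sign regime of $(s,C_1)=(kC,C_1)$. Rather than carrying out separation of variables in full, I would verify by direct differentiation that each formula $w(s,C_1,C_2,r)$ in Definition \ref{def:w} satisfies $\ddot u=-2kCe^u$: for the cosh case using $\mathrm{sech}^2=1-\tanh^2$, for the $\sin^{-2}$ case using $\csc^2=1+\cot^2$, for the $\log^{-2}$ case by a direct differentiation of $1/(t+C_2)$, and in the $s=0$ case the equation degenerates to $\ddot u=0$, solved by $u$ affine in $t$. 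A count of the initial data $(w(r_0),w'(r_0))$ then confirms the 2-parameter family assertion.

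For (2), I would apply equation (\ref{eq:wpsc3}) of Lemma \ref{lem:wpsc}: with $\rho=\sqrt{w}$ and $\xi=kw/r^2$ one computes $(\rho')^2/\xi=(\dot u)^2/(4k)$, and the first integral from (1) turns this into $C_1/k - Cw$. Substituting yields
\[
K^g(a,b) = \frac{1}{w}\!\left(K^{g_Y}(a,b) - \frac{C_1}{k} + Cw\right) = \frac{1}{w}\!\left(K^{g_Y}(a,b) - \frac{C_1}{k}\right) + C,
\]
which is (\ref{eq:sec ab}). Part (3) then follows from Corollary \ref{cor:wpcsc}: since $K^g(\partial_r)=C$ holds by (1), the condition $K^g=C$ reduces to $K^g(a,b)=C$ for all linearly independent $a,b\in TY$, and by the formula above this is equivalent to $K^{g_Y}(a,b)=C_1/k$ for all such $a,b$, i.e., $g_Y$ has constant sectional curvature $C_1/k$.

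The hardest step will be the symbolic reduction of $-2\rho''\xi+\rho'\xi'$ to $-kw^{3/2}\ddot u/r^4$: it is direct but bookkeeping-heavy, and it is what makes the whole proof line up with the classical Liouville equation. Once this reduction is in hand, parts (1), (2) and (3) all fall out from either a one-line integration, substitution into (\ref{eq:wpsc3}), or direct verification of the tabulated formulas.
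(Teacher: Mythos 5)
Your proposal is correct and follows essentially the same route as the paper: the substitution $u=\log w$, $t=\log r$ is just the paper's $w=e^{2W}$ written multiplicatively, and both reduce $K^{g(w)}(\p_r)=C$ to the same Liouville equation with first integral $(\dot u)^2=4C_1-4kC e^u$ (the paper's $(rW')^2=C_1-kCe^{2W}$), after which (2) and (3) follow from (\ref{eq:wpsc3}) and Corollary \ref{cor:wpcsc} exactly as in the paper. The only difference is that the paper integrates the first-order equation by separation of variables to derive the formulas of Definition \ref{def:w}, whereas you verify them by direct differentiation and a count of initial data; both are adequate.
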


\begin{remark}
By fixing $(k C, C_1, C_2)$, 
the function $w (k C, C_1, C_2, r)$ of $r$ is defined for all $r>0$ when $k C \geq 0$. 
When $k C < 0$, it is only defined 
on the complement of the discrete set of $\R_{>0}$. 
\end{remark}

\begin{proof}
Setting $w(r) = e^{2 W(r)}$ for a smooth function $W:\R_{>0} \rightarrow \R$, we have
$$
\xi = \frac{k e^{2 W}}{r^2}, \qquad \r = e^W.
$$
Then 
\begin{align*}
-2 \r'' \xi + \r' \xi'
=&
k \left( -2(W''+(W')^2) e^W \cdot \frac{e^{2 W}}{r^2} 
+ W' e^W \left( \frac{2 W' e^{2 W}}{r^2} - \frac{2 e^{2 W}}{r^3} \right) \right)\\
=&
k e^{3 W} \left(- \frac{2 W''}{r^2} - \frac{2 W'}{r^3} \right)\\
=&
\frac{-2 k e^{3 W}}{r^3} \frac{d}{dr} (r W').
\end{align*}
Since $2 \r \xi^2 = 2 k^2 e^{5 W}/r^4$, we obtain 
\begin{align*} 
K^{g(w)} (\p_r) = 
\frac{-2 \r'' \xi + \r' \xi'}{2 \r \xi^2} = 
- \frac{r }{k e^{2 W}} \frac{d}{dr} (r W'). 
\end{align*}
Thus $K^{g(w)} (\p_r) =C$ is equivalent to  
$$
r \frac{d}{dr} (r W') = - k C e^{2 W}.
$$
Multiplying $W'$ on both sides, we have 
$$
\frac{d}{dr} ((r W')^2) = -k C \frac{d}{d r} e^{2 W}.
$$
Thus we obtain 
\begin{align} \label{eq:ode}
(r W')^2 = -k C e^{2 W} + C_1 
\end{align}
for $C_1 \in \R$. 
This can be solved by the method of separation of variables. 
After a straightforward computation, we obtain the following.

\begin{itemize}
\item When $k C >0$, we have 
$$
W(r)=- \log \left( \sqrt{\frac{k C}{C_1}} \cosh (\sqrt{C_1} (\log r + C_2)) \right)
\qquad (C_1>0, \ C_2 \in \R). 
$$
\item When $C =0$, we have 
$$
W(r)= \pm \sqrt{C_1} \cdot \log r + \frac{C_2}{2}
\qquad (C_1 \geq 0, \ C_2 \in \R), 
$$
\item When $k C <0$, we have 
$$
W(r) = -\log \left| \sqrt{-k C} (\log r + C_2) \right| \qquad (C_2 \in \R),
$$
which corresponds to $C_1=0$, 
or 
$$
W(r) = - \log \left| \sqrt{\frac{k C}{C_1}} \sin (\sqrt{- C_1} (\log r + C_2)) \right|  \qquad (C_1<0, \ C_2 \in \R).  
$$
\end{itemize}
Then we obtain (1) via $w (r) = e^{2 W(r)}$. 
For the proof of (2), 
recall by (\ref{eq:wpsc3}) that 
$K^g (a, b) 
= \frac{1}{\rho^2} \left( K^{g_Y}(a, b) - \frac{(\rho')^2}{\xi} \right).
$
Then we compute 
$$
- \frac{(\r')^2}{\xi} 
= - \frac{(W')^2 e^{2 W} \cdot r^2}{k e^{2 W}}
\stackrel{(\ref{eq:ode})}
= 
\frac{1}{k} (- C_1+ k C e^{2 W}) 
=
- \frac{C_1}{k} + C \r^2, 
$$
which gives the proof of (2). 
The statement (3) is immediate from (1), (2) and Corollary \ref{cor:wpcsc}. 
\end{proof}

\begin{remark} \label{rem:symm gw}
For a function $w_1: \R_{>0} \rightarrow \R_{>0}$, 
define a function $w_2: \R_{>0} \rightarrow \R_{>0}$ by 
$w_2 (r) =w_1 (1/r)$. Then 
$(\R_{>0} \times Y, g(w_1))$ and $(\R_{>0} \times Y, g(w_2))$ are 
isometric via $(r,y) \mapsto (1/r, y)$. 

This is because $dr^2/r^2 = (d \log r)^2$ is invariant under $r \mapsto 1/r$. 
In particular, the space of solutions $w$ of $K^{g(w)} (\p_r) = C$ given in 
Proposition \ref{prop:tech} (1) is invariant under $w(r) \mapsto w(1/r)$. 
\end{remark}

We have the following sectional curvature bound 
by Corollary \ref{cor:sc bound} and Proposition \ref{prop:tech}.

\begin{corollary} \label{cor:sc bound wp}
Use the notation of Definition \ref{def:w} and Proposition \ref{prop:tech}. 
Suppose that $g = g(w(k C, C_1, C_2, \cdot))$, where $(k C, C_1, C_2) \in \Delta_1 \cup \Delta_2 \cup \Delta_3$, 
is definite in the sense of Definition \ref{def:def}. Then we have 
\begin{itemize}
\item
$K^g \geq C$ when $K^{g_Y} \geq C_1/k$, and 
\item
 $K^g \leq C$ when $K^{g_Y} \leq C_1/k$. 
\end{itemize}
Furthermore, $K^g = C$ if and only if $K^{g_Y}=C_1/k$. 
\end{corollary}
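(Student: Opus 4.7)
The plan is to reduce everything to Corollary \ref{cor:sc bound} by plugging in the explicit values of the sectional curvatures of the two ``canonical'' types of planes provided by Proposition \ref{prop:tech}. Concretely, for $g = g(w(kC, C_1, C_2, \cdot))$, Proposition \ref{prop:tech}(1) gives $K^g(\p_r) = C$ as a constant function on $\R_{>0}$, and (\ref{eq:sec ab}) in Proposition \ref{prop:tech}(2) gives
\begin{equation*}
K^g(a,b) - C \;=\; \frac{1}{w(r)}\left(K^{g_Y}(a,b) - \frac{C_1}{k}\right)
\end{equation*}
for linearly independent $a,b \in TY$. Since $w(r) > 0$, the sign of $K^g(a,b) - C$ coincides with the sign of $K^{g_Y}(a,b) - C_1/k$ pointwise.

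Assuming $g$ is definite in the sense of Definition \ref{def:def}, Corollary \ref{cor:sc bound} says that $K^g$ is sandwiched between the infimum of $\min\{K^g(\p_r), K^g(a,b)\}$ and the supremum of $\max\{K^g(\p_r), K^g(a,b)\}$ taken over $r \in \R_{>0}$ and $\{a,b\} \in Gr_2(TY)$. If $K^{g_Y} \geq C_1/k$ everywhere, then by the displayed identity $K^g(a,b) \geq C = K^g(\p_r)$ for every $r$ and every plane in $TY$, so the lower bound in Corollary \ref{cor:sc bound} is at least $C$, yielding $K^g \geq C$. The inequality $K^g \leq C$ when $K^{g_Y} \leq C_1/k$ follows by an entirely symmetric application to the upper bound.

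For the ``furthermore'' clause, the implication $K^{g_Y} = C_1/k \Rightarrow K^g = C$ is exactly Proposition \ref{prop:tech}(3). For the converse, if $K^g \equiv C$ then in particular $K^g(a,b) = C$ for every linearly independent $a,b \in T_y Y$ and every $y \in Y$, and rearranging the boxed identity above immediately forces $K^{g_Y}(a,b) = C_1/k$ identically. I do not expect any serious obstacle: the whole corollary is essentially a bookkeeping combination of Proposition \ref{prop:tech} with the general bound of Corollary \ref{cor:sc bound}, the only delicate point being to notice that positivity of $w(r)$ is what preserves inequalities when passing from $K^{g_Y}$ to $K^g(a,b)$.
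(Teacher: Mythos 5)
Your proposal is correct and follows exactly the route the paper intends: the paper offers no written proof, stating only that the corollary follows from Corollary \ref{cor:sc bound} and Proposition \ref{prop:tech}, and your argument is precisely that combination ($K^g(\p_r)=C$ from part (1), the sign of $K^g(a,b)-C$ controlled by $K^{g_Y}(a,b)-C_1/k$ via (\ref{eq:sec ab}) and positivity of $w$, then the sandwich of Corollary \ref{cor:sc bound}). The ``furthermore'' clause is likewise handled correctly; note it is in fact already the statement of Proposition \ref{prop:tech}(3) verbatim, so either direction could simply be cited.
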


When $\dim Y =1$, we do not need the 
assumption on $K^{g_Y}$ by Corollary \ref{cor:2dim csc}. 
Then Proposition \ref{prop:tech} (1) implies the following.

\begin{corollary} \label{cor:2dim csc wp}
Use the notation of Definition \ref{def:w} and Proposition \ref{prop:tech}. 
In addition to the assumptions of Proposition \ref{prop:tech}, 
suppose further that $\dim Y = 1$.  
Then given $C \in \R$, 
$g=g(w)$ has constant sectional curvature $C$ if 
$w=w(k C, C_1, C_2, \cdot)$, 
where $C_1, C_2 \in \R$ such that $(k C, C_1, C_2) \in \Delta_1 \cup \Delta_2 \cup \Delta_3$. 
\end{corollary}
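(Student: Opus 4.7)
The plan is to combine the two preparatory results that have already been established in this subsection: Corollary \ref{cor:2dim csc}, which characterises constant sectional curvature of a warped product with one-dimensional base purely via the scalar $K^g(\p_r)$, and Proposition \ref{prop:tech}(1), which classifies the solutions of $K^{g(w)}(\p_r) = C$ in the particular coordinate shape $g(w) = (k w(r)/r^2)\,dr^2 + w(r) g_Y$. Under the added hypothesis $\dim Y = 1$, there exist no linearly independent $a,b \in TY$, so the auxiliary condition on $K^{g_Y}$ that forced $g_Y$ to have constant sectional curvature $C_1/k$ in Proposition \ref{prop:tech}(3) becomes vacuous.

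First I would invoke Corollary \ref{cor:2dim csc}: with $(X,g_X) = (\R_{>0}, \xi(r)\,dr^2)$ and $\dim Y = 1$, the warped product $g = \xi(r)\,dr^2 + \rho^2 g_Y$ has $K^g \equiv C$ if and only if $K^g(\p_r) = C$ on all of $\R_{>0}$. Second, for the specific shape in Proposition \ref{prop:tech}, the identifications $\xi(r) = k w(r)/r^2$ and $\rho(r) = \sqrt{w(r)}$ convert $K^g(\p_r) = C$ into the ODE already treated in Proposition \ref{prop:tech}(1). Third, reading off the two-parameter family of solutions listed there, the admissible weights are precisely $w = w(kC, C_1, C_2, \cdot)$ with $(C_1, C_2) \in \R^2$ such that $(kC, C_1, C_2) \in \Delta_1 \cup \Delta_2 \cup \Delta_3$. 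Chaining these three equivalences produces the claimed one-parameter-family statement (genuinely two-parameter in $(C_1, C_2)$, for each fixed $C$).

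There is essentially no obstacle: both ingredients are already proved, and the argument is a one-line reduction. The only bookkeeping point worth emphasising is the vacuity remark that removes the need to impose $K^{g_Y} = C_1/k$ from Proposition \ref{prop:tech}(3); this is what makes the $\dim Y = 1$ case strictly stronger than the higher-dimensional one and is the sole content added beyond what was already shown in Proposition \ref{prop:tech}.
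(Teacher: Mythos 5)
Your proposal is correct and follows exactly the paper's route: the statement is obtained by combining Corollary \ref{cor:2dim csc} (which, for $\dim Y=1$, reduces $K^g\equiv C$ to the single condition $K^g(\p_r)=C$) with the classification of solutions of that ODE in Proposition \ref{prop:tech}(1). Your observation that the hypothesis on $K^{g_Y}$ becomes vacuous in dimension one is precisely the point the paper makes in the sentence preceding the corollary.
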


\subsubsection{The geodesics}
By Proposition \ref{prop:geod re} and Lemma \ref{lem:wp geod 2}, 
we now describe the geodesics explicitly 
for $g(w(0,C_1,C_2, \cdot))$ for $C_1 \geq 0$ and $C_2 \in \R$. 
(We tried to describe the geodesics explicitly for 
$g(w(s,C_1,C_2, \cdot))$ for any $s$, but we could do it only when $s=0$. )

Setting $w(r)= r^{C_0}$ for $C_0 \in \R$, 
we consider the geodesics for the pseudo-Riemannian metric 
\begin{align}\label{eq:geod met}
g=g(r^{C_0})= k r^{C_0-2} dr^2+ r^{C_0} g_Y. 
\end{align}
Note that since $g(\lambda w) = \lambda g(w)$ for $\lambda > 0$ 
and $w:\R_{>0} \rightarrow \R_{>0}$, 
and the Levi-Civita connection is invariant under the scalar multiplication of a pseudo-Riemannian metric, 
we may assume that the coefficient of $r^{C_0}$ is $1$.

\begin{proposition}\label{prop:geod wp explicit}
Use the notation of Definition \ref{def:w} and Proposition \ref{prop:tech}. 
The geodesic $\gamma: (-\epsilon, \epsilon) \ni t \mapsto (r(t), y(t)) \in \R_{>0} \times Y$ 
with the initial position $(r_0, y_0) \in \R_{>0} \times Y$ and 
the initial velocity $(\dot r_0, \dot y_0) \in \R \times T_{y_0} Y$ 
w.r.t. $g=g(r^{C_0})$ is given as follows. 

\begin{enumerate}
\renewcommand{\labelenumi}{(\arabic{enumi})}
\item 
When $C_0 \neq 0$, 
\begin{align}\label{eq:geod wp}
\begin{split}
r(t) &= r_0 \left(1 + C_0 \frac{\dot r_0}{r_0} t + C_0^2 F t^2 \right)^{\frac{1}{C_0}}, \\
y(t) &= \hat y \left( \int^t_0 \frac{d \tau}{1 + C_0 \frac{\dot r_0}{r_0} \tau + C_0^2 F \tau^2} \right), 
\end{split}
\end{align}
where $F= \frac{1}{4} \left(\left( \frac{\dot r_0}{r_0} \right)^2 + \frac{g_Y (\dot y_0, \dot y_0)}{k} \right)$
and $\hat y (s)$ is the geodesic in $(Y, g_Y)$ 
with the initial position $y_0 \in Y$ and the initial velocity $\dot y_0 \in T_{y_0} Y$. 
\item
When $C_0=0$, 
\begin{align}\label{eq:geod wp 0}
r(t) = r_0 e^{\frac{\dot r_0}{r_0} t}, \qquad
y(t)= \hat y (t). 
\end{align}
\end{enumerate}
\end{proposition}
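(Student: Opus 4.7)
The plan is to verify directly that the curves in (\ref{eq:geod wp}) and (\ref{eq:geod wp 0}) solve the geodesic equations of Proposition \ref{prop:geod re} with the prescribed initial data; uniqueness of solutions to the geodesic ODE then identifies them with $\gamma(t)$. The computation splits into identifying the constants $E_1,E_2,E_3$ from the warped product data, reducing $y(t)$ to a quadrature, and solving the first-order ODE (\ref{eq:wp geod 1dim 1}) for $r(t)$.

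First I would specialize Lemma \ref{lem:wp geod 2} and Proposition \ref{prop:geod re} to $\xi(r)=k r^{C_0-2}$ and $\rho(r)=r^{C_0/2}$, obtaining
\[
E_1=g_Y(\dot y_0,\dot y_0)\,r_0^{2C_0},\qquad E_2=k r_0^{C_0-2}\dot r_0^{\,2}+g_Y(\dot y_0,\dot y_0)\,r_0^{C_0},\qquad E_3=r_0^{C_0}.
\]
With these values, the $y$-component follows at once from (\ref{eq:wp geod 4}) once $r(t)$ is known: in Case (1) the ansatz $r(t)=r_0\,u(t)^{1/C_0}$ with $u(t):=1+C_0(\dot r_0/r_0)\,t+C_0^2 F t^2$ gives $E_3/(\rho\circ r)^2=1/u$, matching the stated integrand; in Case (2) we have $\rho\equiv 1$, so $E_3/\rho^2\equiv 1$ and $y(t)=\hat y(t)$.

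The content is therefore the $r$-equation. For Case (2), the metric in the radial direction is $k(d\log r)^2$, which is flat in the coordinate $\log r$, so $\log r$ must be affine in $t$ and $r(t)=r_0 e^{(\dot r_0/r_0)t}$ is forced by the initial data; substitution into (\ref{eq:wp geod 1dim 0}) confirms it. For Case (1), I would substitute the ansatz into the conservation law (\ref{eq:wp geod 1dim 1}) and, after multiplying by $u(t)$, reduce it to the polynomial identity
\[
\frac{k\,r_0^{C_0}}{C_0^2}\,\dot u(t)^2+g_Y(\dot y_0,\dot y_0)\,r_0^{C_0}=E_2\,u(t),
\]
which I would verify coefficient by coefficient in $t$ using $\ddot u\equiv 2C_0^2 F$ and the definition of $F$. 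Combined with the direct checks $r(0)=r_0$ and $\dot r(0)=\dot r_0$, either differentiating the identity on the open set $\{\dot r\neq 0\}$ and extending by continuity, or plugging into (\ref{eq:wp geod 1dim 0}) directly, upgrades the first integral to a solution of the full geodesic ODE.

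The main obstacle is guessing the correct quadratic ansatz $u(t)$ for Case (1); once $u(t)$ is in hand, every step reduces to a routine polynomial check, and the degenerate directions $\dot r_0=0$ or $\dot y_0=0$ are handled uniformly by the same identity.
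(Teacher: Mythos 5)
Your proposal is correct and follows essentially the same route as the paper: both reduce to Proposition \ref{prop:geod re} and Lemma \ref{lem:wp geod 2}, identify $E_1,E_2,E_3$, solve the radial motion through the first integral (\ref{eq:wp geod 1dim 1}) by working with $r^{C_0}$, and obtain $y(t)$ by the quadrature (\ref{eq:wp geod 4}). The only difference is cosmetic: the paper \emph{derives} that $s=r^{C_0}$ is quadratic by differentiating the conservation law (treating the degenerate constant case separately), whereas you \emph{verify} the quadratic ansatz by a coefficient check, which handles the degenerate initial data uniformly; both versions correctly note that the first integral must still be upgraded to the full second-order equation (\ref{eq:wp geod 1dim 0}).
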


\begin{remark} \label{rem:explicit geod}
By a straightforward computation, 
the integral $\int^t_0 \frac{d \tau}{1 + C_0 \frac{\dot r_0}{r_0} \tau + C_0^2 F \tau^2}$ 
in (\ref{eq:geod wp}) can be explicitly computed as follows. 
$$
\left\{ \begin{array}{ll}
\frac{2}{C_0} \sqrt{\frac{k}{g_Y (\dot y_0, \dot y_0)}} \arctan 
\left ( \frac{\sqrt{\frac{g_Y (\dot y_0, \dot y_0)}{k}} t}{\frac{\dot r_0}{r_0} t + \frac{2}{C_0}} \right )
 & \mbox{if} \quad F \neq 0, \ k g_Y (\dot y_0, \dot y_0) >0, \\
\frac{4 r_0^2 F t}{\dot r_0 (2 r_0 C_0 F t + \dot r_0)} 
& \mbox{if} \quad F \neq 0, \ k g_Y (\dot y_0, \dot y_0) = 0, \\
\frac{2}{C_0} \sqrt{\frac{-k}{g_Y (\dot y_0, \dot y_0)}} {\rm arctanh}  
\left ( \frac{\sqrt{\frac{- g_Y (\dot y_0, \dot y_0)}{k}} t}{\frac{\dot r_0}{r_0} t + \frac{2}{C_0}} \right )
 & \mbox{if} \quad F \neq 0, \ k g_Y (\dot y_0, \dot y_0) < 0, \\
\frac{r_0}{C_0 \dot r_0} \log \left( 1+ C_0 \frac{\dot r_0}{r_0} t \right)
& \mbox{if} \quad F = 0, \ \dot r_0 \neq 0, \\
t & \mbox{if} \quad F = 0, \ \dot r_0 = 0. \\
\end{array} \right.
$$
These formulae implies that geodesics are not defined for all $t \in \R$ in general. 
In particular, $g(r^{C_0})$ in (\ref{eq:geod met}) is incomplete if $C_0 \neq 0$. 
It is complete 
if $C_0=0$ and $g_Y$ is complete.  
This is consistent with Theorem \ref{thm:comp wp}. 
\end{remark}

\begin{proof}
Setting  
$$
\xi (r) =k r^{C_0-2}, \qquad 
\r (r) = r^{C_0/2}, 
$$
we have to solve (\ref{eq:wp geod 1dim 0}) and compute (\ref{eq:wp geod 4})
by Proposition \ref{prop:geod re} and Lemma \ref{lem:wp geod 2}. 
By Lemma \ref{lem:wp geod 2}, 
we first solve (\ref{eq:wp geod 1dim 1}). 
It is equivalent to 
\begin{align}\label{eq:geod wp ori}
k r^{2 C_0-2} (\dot r)^2 + E_1 = E_2 r^{C_0}. 
\end{align}

Now suppose that $C_0=0$. Then (\ref{eq:geod wp ori}) implies that 
$r^{-2} (\dot r)^2$ is constant, 
and hence $r(t)=L_1 e^{t L_2}$ for $L_1, L_2 \in \R$. 
Since $r(0)=r_0$ and $\dot r (0)= \dot r_0$, we have 
$r(t)=r_0 e^{t \dot r_0/r_0}$. 
It is straightforward to see that this satisfies (\ref{eq:wp geod 1dim 0}).  
Since $(\r (r))^2 = 1$, (\ref{eq:wp geod 4}) implies that $y(t)=\hat y(t)$. \\

Next, suppose that $C_0 \neq 0$. 
Setting $s(t)=(r(t))^{C_0}$, we have 
$\dot s = C_0 r^{C_0-1} \dot r$. 
Then (\ref{eq:geod wp ori}) becomes 
$$
\frac{k}{C_0^2} (\dot s)^2 + E_1 =E_2 s,
$$
Differentiating this equation, we have 
$$
\dot s \left(\frac{2 k}{C_0^2}  \ddot s - E_2 \right)=0,
$$
which implies that 
$s(t) = F_0 + F_1 t + \frac{C_0^2 E_2}{4 k} t^2$ for $F_0, F_1 \in \R$ or $s(t)$ is constant. 
When 
$s(t) = F_0 + F_1 t + \frac{C_0^2 E_2}{4 k} t^2$, 
since 
$r(0)=r_0$ and $\dot r(0) = \dot r_0$, 
we have 
$F_0=r_0^{C_0}$ and 
$F_1 = C_0 r_0^{C_0-1} \dot r_0$. 
Since 
\begin{align*}
\frac{C_0^2 E_2}{4 k} 
= \frac{C_0^2}{4 k} \left( \xi (r_0)  (\dot r_0)^2 + g_Y (\dot y_0, \dot y_0) (\r(r_0))^2 \right)
= C_0^2 r_0^{C_0} \cdot \frac{1}{4} \left( \left( \frac{\dot r_0}{r_0} \right)^2 
+ \frac{g_Y (\dot y_0, \dot y_0)}{k} \right), 
\end{align*}
we see that $r(t)=(s(t))^{1/C_0}$ is given by the first equation of (\ref{eq:geod wp}). 
It is straightforward to see that this satisfies (\ref{eq:wp geod 1dim 0}). 

When $s(t)$ is constant, and hence $r(t)$ is constant, 
(\ref{eq:wp geod 1dim 0}) implies that $E_1=0$. 
Then (\ref{eq:geod wp ori}) implies that $E_2=0$. 
By the definitions of $E_1$ and $E_2$ in 
Proposition \ref{prop:geod re} and Lemma \ref{lem:wp geod 2}, 
it follows that $g(\dot y_0, \dot y_0) =0$ and $\dot r_0=0$. 
Hence this case is reduced to (\ref{eq:geod wp}). 

Since 
$(\r \circ r (t))^2 = r(t)^{C_0} = s(t)$, 
(\ref{eq:wp geod 4}) implies the second equation of (\ref{eq:geod wp}). 
\end{proof}

The next corollary is used to prove Proposition \ref{prop:convex}. 

\begin{corollary}\label{cor:convex wp}
Let 
$\gamma: (-\epsilon, \epsilon) \ni t \mapsto (r(t), y(t)) \in \R_{>0} \times Y$ 
be a geodesic 
w.r.t. the pseudo-Riemannian metric $g(r^{C_0})$ in (\ref{eq:geod met}). 

\begin{enumerate}
\renewcommand{\labelenumi}{(\arabic{enumi})}
\item
The function
$r(t)$ is a convex function if one of the following conditions holds. 
\begin{itemize}
\item $C_0=0$, 
\item $0 < C_0 \leq 2$ and $k g_Y$ is positive definite, 
\item $C_0 < 0$ and $k g_Y$ is negative definite, 
\end{itemize}

\item
The function $r(t)$ is a concave function if 
$C_0 \geq 2$ and $k g_Y$ is negative definite. 
\end{enumerate}
\end{corollary}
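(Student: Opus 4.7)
The plan is to read off the sign of $\ddot r$ directly from the geodesic equation (\ref{eq:wp geod 1dim 0}) rather than differentiating the explicit formula (\ref{eq:geod wp}) twice. Substituting $\xi(r) = k r^{C_0-2}$ and $\rho(r) = r^{C_0/2}$ into (\ref{eq:wp geod 1dim 0}) gives, after simplification,
\begin{equation*}
\ddot r = \frac{2-C_0}{2 r} (\dot r)^2 + \frac{C_0 E_1}{2 k}\, r^{1-2 C_0},
\end{equation*}
where, by Proposition \ref{prop:geod re} and Lemma \ref{lem:wp geod 2}, $E_1 = g_Y(\dot y_0, \dot y_0)\, r_0^{2 C_0}$. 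Since $r_0^{2 C_0} > 0$, the sign of $E_1/k$ equals that of $g_Y(\dot y_0, \dot y_0)/k$. Thus the sign of $\ddot r$ is controlled by the signs of $2-C_0$, of $C_0$, and of $k g_Y(\dot y_0, \dot y_0)$.

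Now I would go through each case. When $C_0 = 0$, the second term disappears and $\ddot r = (\dot r)^2 / r \geq 0$, giving convexity. For $0 < C_0 \leq 2$ with $k g_Y$ positive definite, the coefficient $(2-C_0)/(2 r)$ is non-negative and, since $g_Y(\dot y_0, \dot y_0)/k \geq 0$, the term $C_0 g_Y(\dot y_0, \dot y_0)/k$ is also non-negative; hence $\ddot r \geq 0$. For $C_0 < 0$ with $k g_Y$ negative definite, $(2-C_0)/(2 r) > 0$ and $g_Y(\dot y_0, \dot y_0)/k \leq 0$, so $C_0 g_Y(\dot y_0,\dot y_0)/k \geq 0$ and convexity follows. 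For the concavity assertion $C_0 \geq 2$ with $k g_Y$ negative definite, $(2-C_0)/(2 r) \leq 0$ and $C_0 g_Y(\dot y_0,\dot y_0)/k \leq 0$, whence $\ddot r \leq 0$.

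There is really no obstacle here beyond careful bookkeeping of signs; the only point worth double-checking is the derivation of the displayed formula for $\ddot r$, in particular that $\xi'/(2\xi) = (C_0-2)/(2 r)$ and $(\rho'/\xi) \cdot (\rho)^{-3} = \frac{C_0}{2 k}\, r^{1-2 C_0}$, which both follow from a direct computation with the power functions $\xi$ and $\rho$. An alternative would be to differentiate (\ref{eq:geod wp}) twice, writing $r(t) = r_0 h(t)^{1/C_0}$ for the quadratic $h(t) = 1 + C_0 (\dot r_0/r_0) t + C_0^2 F t^2$ and using $\ddot h = 2 C_0^2 F$ together with the sign of $F$, but using the geodesic equation avoids having to handle the $C_0 = 0$ case separately beyond a single substitution.
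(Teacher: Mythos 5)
Your argument is correct, and it takes a genuinely different route from the paper. The paper proves this corollary by differentiating the explicit solution (\ref{eq:geod wp}) twice, obtaining $\frac{d^2 r}{dt^2} = \frac{r_0}{C_0}\left(1 + C_0 \frac{\dot r_0}{r_0} t + C_0^2 F t^2\right)^{\frac{1}{C_0}-2} p(t)$ for an explicit quadratic $p(t)$, treating $C_0 = 0$ and $C_0 = 2$ as separate cases, and then determining the sign of $p$ via its discriminant ${\rm disc}(p(t)) = 4(C_0-2)C_0^3 F^2 \cdot \frac{g_Y(\dot y_0,\dot y_0)}{k}$. You instead read the sign of $\ddot r$ straight off the geodesic ODE (\ref{eq:wp geod 1dim 0}), using only the power-law substitutions $\xi'/(2\xi) = (C_0-2)/(2r)$ and $(\rho'/\xi)\rho^{-3} = \frac{C_0}{2k} r^{1-2C_0}$ together with the conserved quantity $E_1 = g_Y(\dot y_0,\dot y_0)\,r_0^{2C_0}$, whose sign relative to $k$ is exactly the definiteness hypothesis. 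Your computations check out, all four cases become uniform sign inspections with no discriminant and no special handling of $C_0 = 2$, and your argument has the additional merit of not depending on the explicit integration of the geodesic flow in Proposition \ref{prop:geod wp explicit} (only on Lemma \ref{lem:wp geod 2} and the constancy of $g_Y(\dot y,\dot y)(\rho\circ r)^4$ along geodesics, which follows from (\ref{eq:wp geod 2})). What the paper's route buys in exchange is that the explicit formula for $r(t)$ is needed elsewhere anyway (Proposition \ref{prop:geod explicit}), and the discriminant computation localizes precisely where convexity fails when the hypotheses do not hold; but as a proof of the stated corollary, your version is cleaner.
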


\begin{proof}
By Proposition \ref{prop:geod wp explicit}, 
it is obvious that $r(t)$ is convex when $C_0=0$. 
Suppose that $C_0 \neq 0$. 
A straightforward calculation gives that 
$$
\frac{d^2 r(t)}{d t^2} 
=
\frac{r_0}{C_0} \cdot 
\left(1 + C_0 \frac{\dot r_0}{r_0} t + C_0^2 F t^2 \right)^{\frac{1}{C_0}-2} p(t),
$$
where $p(t)$ is a polynomial given by 
$$
p(t) = 
(4-2 C_0) \left( C_0^3 F^2 t^2 + C_0^2 \frac{\dot r_0}{r_0} F t \right)
+ 
\left(
(1-C_0) C_0 \left( \frac{\dot r_0}{r_0} \right)^2 + 2 C_0^2 F \right). 
$$
When $C_0=2$, we have 
$$
p(t) 
= -2 \left( \frac{\dot r_0}{r_0} \right)^2 + 8 F
= \frac{2 g_Y (\dot y_0, \dot y_0)}{k}, 
$$
which gives the statement for $C_0=2$. 

Suppose that $C_0 \neq 0, 2$. 
It is also straightforward to see that 
the discriminant ${\rm disc}(p(t))$ of the quadratic $p(t)$ is given by 
$$
{\rm disc}(p(t)) = 4 (C_0 -2) C_0^3 F^2 \cdot \frac{g_Y (\dot y_0, \dot y_0)}{k}. 
$$
Since 
$r(t)$ is convex (resp. concave) 
if $4-2 C_0 > 0$ (resp. $4-2 C_0 < 0$) and ${\rm disc}(p(t)) \leq 0$, 
we obtain the statement.  
\end{proof}

\begin{remark} \label{rem:kgY g gY}
Use the notation of Definition \ref{def:def}. 
By the definition of $g=g(r^{C_0})$ in (\ref{eq:geod met}), we can rephrase the conditions 
in Corollary \ref{cor:convex wp} as follows. 
\begin{itemize}
\item
The pseudo-Riemannian metric 
$k g_Y$ is positive definite if and only if $g$ and $g_Y$ are definite. 
\item
The pseudo-Riemannian metric 
$k g_Y$ is negative definite if and only if $g$ is Lorentzian and $g_Y$ is definite. 
\end{itemize}
\end{remark}

\subsubsection{The metric completion}

In this subsection, 
we consider the pseudo-Riemannian metric $g=g(w)$ given in (\ref{eq:def gw}) again. 
We assume the following. 
\begin{itemize}
\item
The pseudo-Riemannian metric $g=g(w)$ given in (\ref{eq:def gw}) is positive definite. 
That is, $k>0$ and $g_Y$ is positive definite. 
\item
The pseudometric $d_g$ induced from $g=g(w)$
is a metric.
(This is always true when $Y$ is finite dimensional. 
In the infinite dimensional case, 
there are examples of a Riemannian metric 
whose induced pseudometric is identically zero (\cite{MM}). 
Note that Lemmas \ref{lem:unif conti} and \ref{lem:R12} (1)
imply that $d_g$ is a metric if 
the pseudometric $d_{g_Y}$ induced from $g_Y$ is a metric. 
)
\end{itemize}

We study the metric completion of $\R_{>0} \times Y$ w.r.t. $d_g$ following \cite[Section 5]{CR2}. 
Recall that the metric $d_g$ between $(r_0, y_0)$ and $(r_1, y_1) \in \R_{>0} \times Y$ 
is given by
\begin{align*}
&d_g ((r_0, y_0), (r_1, y_1)) \\
=& 
\inf \left\{ L_g (c)  \ \middle| \ 
\begin{array}{l}
c = (r \circ c, y \circ c): [0,1] \rightarrow \R_{>0} \times Y \mbox{ is a piecewise smooth path} \\
\mbox{with } c(0) = (r_0, y_0) \mbox{ and } c(1)=(r_1, y_1) \\
\end{array} 
\right \}, 
\end{align*}
where 
\begin{align} \label{eq:def length}
\begin{split}
L_g (c) 
=& \int_0^1 \left| \frac{d c}{d t} (t) \right|_g dt\\
=& \int_0^1 \sqrt{\frac{k w ((r \circ c) (t))}{((r \circ c) (t))^2}   \left( \frac{d (r \circ c)}{dt} (t) \right)^2
+ w ((r \circ c) (t)) \left| \frac{d (y \circ c)}{d t} (t) \right|_{g_Y}^2 } dt. 
\end{split}
\end{align}
Here, we use the notation of Appendix \ref{app:notation}. 
Similarly, we can define the metric $d_{g_Y}$ induced from $g_Y$. \\

To study the metric completion, we first prove the following lemmas. 
Fixing $R_0 \in \R_{>0}$, define a strictly increasing function $T:\R_{>0} \rightarrow \R$ by 
\begin{align} \label{eq:def T}
T(r)=\int^r_{R_0} \frac{\sqrt{k w(q)}}{q} dq. 
\end{align}

\begin{lemma} \label{lem:unif conti}
For any $(r_0, y_0), (r_1, y_1) \in \R_{>0} \times Y$, we have 
$$
d_g ((r_0, y_0), (r_1, y_1)) \geq |T(r_1)-T(r_0)|. 
$$
In particular, 
$\R_{>0} \times Y \ni (r,y) \mapsto r \in \R_{>0}$ is continuous w.r.t. $d_g$. 
\end{lemma}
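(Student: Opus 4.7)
The plan is to bound the length of an arbitrary piecewise smooth path from below by the variation of $T \circ r$ along it, and then take the infimum.

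First I would fix a piecewise smooth path $c=(r\circ c,\,y\circ c):[0,1]\to\R_{>0}\times Y$ with $c(0)=(r_0,y_0)$ and $c(1)=(r_1,y_1)$, and bound its length by discarding the non-negative $g_Y$-contribution in \eqref{eq:def length}:
\[
L_g(c)\;\ge\;\int_0^1 \frac{\sqrt{k\,w((r\circ c)(t))}}{(r\circ c)(t)}\,\left|\frac{d(r\circ c)}{dt}(t)\right|\,dt.
\]
Here I use that $k>0$ and $w>0$ (so the integrand is real and non-negative), which is part of the standing positivity assumption in this subsection.

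Next I would recognize the integrand as $\bigl|(T\circ r\circ c)'(t)\bigr|$, since the definition \eqref{eq:def T} gives $T'(q)=\sqrt{k\,w(q)}/q$ and the chain rule yields $(T\circ r\circ c)'(t)=T'((r\circ c)(t))\cdot (r\circ c)'(t)$. Hence
\[
L_g(c)\;\ge\;\int_0^1 \bigl|(T\circ r\circ c)'(t)\bigr|\,dt\;\ge\;\bigl|T(r_1)-T(r_0)\bigr|,
\]
the last inequality being the elementary fact that the total variation of a piecewise smooth real function on $[0,1]$ is at least the absolute difference of its endpoint values. Taking the infimum over all admissible paths $c$ gives the asserted inequality $d_g((r_0,y_0),(r_1,y_1))\ge |T(r_1)-T(r_0)|$.

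For the continuity statement, I would use that $T:\R_{>0}\to T(\R_{>0})$ is strictly increasing and continuous (since $T'(r)=\sqrt{k\,w(r)}/r>0$), so its inverse is continuous on its image. If $(r_n,y_n)\to(r,y)$ with respect to $d_g$, then by the displayed inequality $T(r_n)\to T(r)$, hence $r_n\to r$ in $\R_{>0}$. There is essentially no hard step here; the only thing to be careful about is that the lower bound step requires $w>0$ and $k>0$ so that the bound is not vacuous, and that $T$ is strictly monotone (guaranteed by these same positivity assumptions) so that $T^{-1}$ exists and is continuous.
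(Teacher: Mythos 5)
Your argument is correct and coincides with the paper's own proof: both discard the $g_Y$-term in the length integrand, identify what remains as $\bigl|\frac{d}{dt}(T\circ r\circ c)\bigr|$, bound the integral from below by $|T(r_1)-T(r_0)|$, and take the infimum over paths. The continuity claim then follows exactly as you describe from the strict monotonicity of $T$.
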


\begin{proof}
Let $c = (r \circ c, y \circ c): [0,1] \rightarrow \R_{>0} \times Y$ be a piecewise smooth path 
with $c(0) = (r_0, y_0)$ and $c(1)=(r_1, y_1)$.
By (\ref{eq:def length}), we compute 
$$
L_g(c) \geq 
\int_0^1 \frac{\sqrt{k w (r \circ c)}}{r \circ c} 
\left|\frac{d}{dt} (r \circ c) \right| dt
\geq
\left|
\int_0^1  
\frac{\sqrt{k w (r \circ c)}}{r \circ c} 
\frac{d}{dt} (r \circ c) dt 
\right|
= 
|T(r_1) - T(r_0)|. 
$$
\end{proof}

As $T$ is strictly increasing, 
it converges in $\R \cup \{ - \infty \}$ 
(resp. $\R \cup \{ \infty \}$) as $r \rightarrow 0$ (resp. $r \rightarrow \infty$). 
Set 
$$
T_0 = \lim_{r \rightarrow 0} T(r) \in \R \cup \{ - \infty \}, \qquad
T_\infty = \lim_{r \rightarrow \infty} T(r) \in \R \cup \{ \infty \}. 
$$
Then the following is immediate from Lemma \ref{lem:unif conti}. 
This is useful to study the metric completion w.r.t. $d_g$.

\begin{corollary} \label{cor:conv r}
If $\{ (r_k, y_k) \} \subset \R_{>0} \times Y$ is a $d_g$-Cauchy sequence, 
$\{ r_k \}$ converges in 
$$
\left\{ \begin{array}{lll}
\R_{> 0}                              & \mbox{if} \quad T_0= - \infty, \  &T_\infty = \infty, \\
\{ 0 \} \cup \R_{> 0}                         & \mbox{if} \quad T_0 \in \R, \  &T_\infty = \infty, \\
\R_{> 0} \cup \{ \infty \}      & \mbox{if} \quad T_0= - \infty, \  &T_\infty \in \R, \\
\{ 0 \} \cup \R_{> 0} \cup \{ \infty \} & \mbox{if} \quad T_0 \in \R, \  &T_\infty \in \R. \\
\end{array} \right.
$$
\end{corollary}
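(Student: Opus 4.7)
The plan is to reduce the question about convergence of $\{r_k\}$ to the convergence of $\{T(r_k)\}$ in $\mathbb{R}$, using the one-sided Lipschitz-type bound from Lemma \ref{lem:unif conti}.

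First I would apply Lemma \ref{lem:unif conti} directly: for any $j, k$,
\[
|T(r_j) - T(r_k)| \leq d_g((r_j, y_j), (r_k, y_k)).
\]
Since $\{(r_k, y_k)\}$ is $d_g$-Cauchy, the right-hand side tends to $0$ as $j, k \to \infty$. Thus $\{T(r_k)\}$ is a Cauchy sequence in $\mathbb{R}$ and converges to some $T_* \in \mathbb{R}$.

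Next I would use the fact that $T: \mathbb{R}_{>0} \to \mathbb{R}$, defined by (\ref{eq:def T}) with integrand $\sqrt{kw(q)}/q > 0$, is a smooth strictly increasing function, hence a homeomorphism onto its image, which is the open interval $(T_0, T_\infty)$ (allowing $\pm\infty$ as endpoints). Since $T(r_k) \to T_*$ and $T(r_k) \in (T_0, T_\infty)$, the limit $T_*$ lies in the closure $[T_0, T_\infty] \cap \mathbb{R}$. I would then split into three cases: if $T_* \in (T_0, T_\infty)$, then by continuity of $T^{-1}$ we get $r_k \to T^{-1}(T_*) \in \mathbb{R}_{>0}$; if $T_* = T_0$, which forces $T_0 \in \mathbb{R}$, then monotonicity of $T$ gives $r_k \to 0$; and if $T_* = T_\infty$, which forces $T_\infty \in \mathbb{R}$, then $r_k \to \infty$. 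Enumerating which of these three possibilities can occur according to the finiteness of $T_0$ and $T_\infty$ produces exactly the four cases in the statement.

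There is no serious obstacle here; the proof is essentially a one-line consequence of Lemma \ref{lem:unif conti} together with the observation that a strictly monotone continuous function has a continuous inverse on its image. The only mild point of care is noting that the cases $T_* = T_0$ and $T_* = T_\infty$ are automatically excluded when these limits are $\pm\infty$ (since $T_*$ is a finite real number), which is precisely what rules out the endpoints $0$ and $\infty$ in the first three cases of the dichotomy.
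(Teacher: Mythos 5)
Your argument is correct and is exactly the reasoning the paper has in mind: the paper states the corollary is "immediate from Lemma \ref{lem:unif conti}," and your proof simply fleshes out that step by showing $\{T(r_k)\}$ is Cauchy in $\R$ and then transferring the limit back through the strictly increasing function $T$. No gaps.
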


\begin{lemma} \label{lem:R12}
Fix $0<R_1 <R_2$. There exist 
$\delta =\delta (R_1, R_2, T)$, a constant depending on $R_1,R_2$ and $T$, 
and  
$C' =C' (R_1, R_2, w), C''=C'' (R_1, R_2, w) > 0$, 
constants depending on $R_1,R_2$ and $w$, 
such that for any 
$(r_0,y_0), (r_1,y_1) \in (R_1, R_2) \times Y$
\begin{enumerate}
\renewcommand{\labelenumi}{(\arabic{enumi})}
\item 
$d_g \left((r_0,y_0), (r_1,y_1) \right) < \delta \Rightarrow 
d_{g_Y} (y_0,y_1) \leq C' d_g \left( (r_0,y_0), (r_1,y_1) \right). 
$
\item
$
d_g ((r_0,y_0), (r_1,y_1)) \leq 
\left| T(r_0) - T(r_1) \right| + C'' d_{g_Y} (y, y'). 
$
\end{enumerate}
\end{lemma}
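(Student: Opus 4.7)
The plan is to prove (2) by constructing an explicit piecewise-smooth comparison path from $(r_0, y_0)$ to $(r_1, y_1)$, and (1) by combining Lemma \ref{lem:unif conti} with a compactness argument to trap the $r$-coordinate inside a subinterval on which $w$ is bounded away from zero.

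For (2), I would concatenate a \emph{radial} segment and a \emph{fiber} segment. First, take any monotone $\alpha:[0,1] \to \R_{>0}$ with $\alpha(0) = r_0$, $\alpha(1) = r_1$, and set $c_1(t) = (\alpha(t), y_0)$; since the only nonzero component of $\dot c_1$ is the $r$-component, the change of variables implicit in the definition of $T$ gives $L_g(c_1) = |T(r_0) - T(r_1)|$. Second, for $\epsilon > 0$, pick a path $\beta:[0,1]\to Y$ from $y_0$ to $y_1$ with $L_{g_Y}(\beta) < d_{g_Y}(y_0, y_1) + \epsilon$, and set $c_2(t) = (r_1, \beta(t))$, which has $g$-length $\sqrt{w(r_1)}\, L_{g_Y}(\beta)$. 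Concatenating $c_1$ with $c_2$ and letting $\epsilon \to 0$ yields
$$d_g((r_0, y_0),(r_1,y_1)) \leq |T(r_0) - T(r_1)| + \sqrt{w(r_1)}\, d_{g_Y}(y_0, y_1),$$
and setting $C'' := \sup_{r \in [R_1, R_2]} \sqrt{w(r)}$, which is finite by continuity of $w$ on the compact interval $[R_1,R_2]$, finishes the proof of (2) (interpreting $d_{g_Y}(y,y')$ in the statement as $d_{g_Y}(y_0,y_1)$).

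For (1), I would first fix auxiliary radii $R_1' \in (0, R_1)$ and $R_2' \in (R_2, \infty)$ and set
$$\delta := \min\bigl\{\, T(R_1) - T(R_1'),\ T(R_2') - T(R_2) \,\bigr\} > 0,$$
which depends only on $R_1, R_2, T$. Suppose $d_g((r_0, y_0), (r_1, y_1)) < \delta$. For any sufficiently small $\epsilon > 0$ pick a piecewise-smooth path $c = (r \circ c,\ y \circ c)$ with $L_g(c) < d_g + \epsilon < \delta$. The argument proving Lemma \ref{lem:unif conti}, applied to the restriction $c|_{[0,t]}$, gives $|T((r \circ c)(t)) - T(r_0)| \leq L_g(c|_{[0,t]}) \leq L_g(c) < \delta$ for every $t$, so the strict monotonicity of $T$ together with our choice of $\delta$ forces $(r \circ c)(t) \in [R_1', R_2']$ throughout. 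Setting $m := \min_{r \in [R_1', R_2']} w(r) > 0$, the pointwise inequality
$$L_g(c) \geq \int_0^1 \sqrt{w((r \circ c)(t))}\, \left|\frac{d(y \circ c)}{dt}\right|_{g_Y} dt \geq \sqrt{m}\, L_{g_Y}(y \circ c) \geq \sqrt{m}\, d_{g_Y}(y_0, y_1)$$
and $\epsilon \to 0$ yield (1) with $C' := 1/\sqrt{m}$.

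The main obstacle is less conceptual than organizational: in (1) one must choose $\delta$ using only $R_1, R_2$ and $T$ so as to force every sufficiently short path to remain inside a compact subinterval $[R_1', R_2'] \subset \R_{>0}$, on which the lower bound $m$ for $w$ exists and depends only on $R_1, R_2, w$. Once this confinement is arranged, both asserted inequalities reduce to the defining integral formula for $L_g$ combined with the change of variables encoded in $T$.
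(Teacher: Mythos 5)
Your proposal is correct and follows essentially the same strategy as the paper: for (2) an explicit comparison path whose radial contribution is exactly $|T(r_0)-T(r_1)|$ and whose fiber contribution is controlled by $\sup_{[R_1,R_2]}\sqrt{w}$ (the paper moves $r$ and $y$ simultaneously and splits the length integral, while you concatenate a radial and a fiber segment, which is an immaterial variation); for (1) the same confinement argument, using Lemma \ref{lem:unif conti} to trap $r\circ c$ in a compact subinterval of $\R_{>0}$ on which $w$ is bounded below, the paper choosing $[R_1/2,\,2R_2]$ where you choose generic $[R_1',R_2']$. No gaps.
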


\begin{proof}
First, we show that 
there exists $\delta = \delta (R_1, R_2, T) > 0$  
such that for any 
$(r,y) \in (R_1, R_2) \times Y$ and $(r',y') \in \R_{>0} \times Y$ 
\begin{align}\label{eq:unif conti cor}
d_g \left( (r,y), (r',y') \right) < 2 \delta \Rightarrow \frac{R_1}{2} < r' < 2 R_2. 
\end{align}

By Lemma \ref{lem:unif conti}, the map 
$\R_{>0} \times Y \ni (r,y) \mapsto T(r) \in \R$ is uniformly continuous w.r.t. $d_g$. 
Then for $\epsilon = 
{\rm min} \left\{ T(R_1) - T \left(R_1/2 \right), T(2 R_2) - T(R_2) 
\right\} >0$, 
there exists $\delta = \delta (R_1, R_2, T) >0$ such that 
for any $(r,y), (r',y') \in \R_{>0} \times Y$
$$
d_g \left( (r,y), (r',y') \right) < 2 \delta \Rightarrow |T(r) - T(r')| < \epsilon. 
$$
In particular, if $(r,y) \in (R_1,R_2) \times Y$, we see that 
$$
T \left( \frac{R_1}{2} \right)
\leq T(R_1) - \epsilon 
< T(r) - \epsilon 
< T(r')
< T(r) + \epsilon
< T(R_2) + \epsilon
\leq T(2R_2), 
$$
hence we obtain (\ref{eq:unif conti cor}). \\

Now we prove (1). Suppose that $d_g \left( (r_0,y_0), (r_1,y_1) \right) < \delta$ for $\delta$ given above. 
For any $0 < \epsilon < \delta$, take a piecewise smooth path 
$\{ c(t) \}_{t \in [0,1]}$ 
connecting $(r_0,y_0)$ and $(r_1,y_1)$ 
such that 
$
L_g (c) < d_g \left( (r_0,y_0), (r_1,y_1) \right)  + \epsilon. 
$
Then for any $t \in [0,1]$, we have 
$$
d_g ((r_0,y_0), c(t)) \leq L_g (c|_{[0,t]}) \leq L_g (c) < \delta + \epsilon < 2 \delta. 
$$
Hence by (\ref{eq:unif conti cor}), it follows that 
$\frac{R_1}{2} < (r  \circ c) (t) < 2 R_2$ for any $t \in [0,1]$. 
Thus 
setting 
$1/C' = {\rm min} \left \{ \sqrt{w(r)} \mid r \in \left[ \frac{R_1}{2}, 2 R_2 \right] \right \},$
we obtain by (\ref{eq:def length})
$$
L_g (c) 
\geq \int_0^1 \sqrt{w ((r \circ c) (t))} \left| \frac{d (y \circ c)}{d t} (t) \right|_{g_Y} dt
\geq  \frac{L_{g_Y} (y \circ c)}{C'} 
\geq \frac{d_{g_Y} (y_0,y_1)}{C'}. 
$$
Since $L_g (c) < d_g \left( (r_0,y_0), (r_1,y_1) \right)  + \epsilon$ and $\epsilon$ 
is arbitrarily small, we obtain (1). \\

Next, we prove (2). 
Define a path $c:[0,1] \rightarrow \R_{>0} \times Y$ by 
$c(t)= \left( (r_1-r_0)t+ r_0, \tilde y (t) \right)$, 
where $\tilde y: [0,1] \rightarrow Y$ is a path such that 
$\tilde y (0)=y_0$ and $\tilde y (1)=y_1$. Then by (\ref{eq:def length}), we see that 
$$
d_g \left( (r_0,y_0), (r_1,y_1) \right) \leq L(c) \leq I_1 + I_2, 
$$
where 
\begin{align*}
I_1&= \int_0^1 \frac{\sqrt{k w (r \circ c)}}{r \circ c} |r_1-r_0| dt 
= \left| \int_0^1 \frac{\sqrt{k w (r \circ c)}}{r \circ c} (r_1-r_0) dt \right|
= \left|T(r_1)-T(r_0) \right|, \\ 
I_2 &= \int_0^1 \sqrt{w (r \circ c)} \left| \frac{d \tilde y}{d t} (t) \right|_{g_Y} dt. 
\end{align*}
Set $C''= \max \{ \sqrt{w(r)} \mid r \in [R_1, R_2] \}$. 
Since $(r \circ c)(t) = (r_1-r_0) t + r_0 \in (R_1,R_2)$ for any $t \in [0,1]$, we see that 
$$
I_2 \leq C'' L_{g_Y} (\tilde y). 
$$
Since $\tilde y$ is arbitrary, we obtain (2). 
\end{proof}

For a subset $S \subset \R_{>0} \times Y$, 
denote by ${\rm diam}_{d_g} (S)$ the diameter of $S$ w.r.t. $d_g$. 
Assuming the behaviors of $w(r)$ around $r=0$ and $\infty$, 
we have the following estimates. 
These are very useful to control the $d_g$-Cauchy sequences 
$\{ (r_k, y_k)\} \subset \R_{>0} \times Y$ 
with $\lim_{k \rightarrow \infty} r_k =0$ or $\infty$.

\begin{lemma}\label{lem:diam}
For any $(r_0, y_0), (r_1, y_1) \in \R_{>0} \times Y$, we have the following. 

\begin{enumerate}
\renewcommand{\labelenumi}{(\arabic{enumi})}
\item 
If $T_0 \in \R$ and $\lim_{r \rightarrow 0} w(r) =0$, we have  
\begin{align}\label{eq:estimate for diam 0} 
d_g \left((r_0, y_0), (r_1, y_1) \right) \leq T(r_0) + T(r_1) - 2 T_0. 
\end{align}
In particular, we have for  $R>0$
$$
{\rm diam}_{d_g} \{ (r,y) \in \R_{>0} \times Y \mid r \leq R \} \leq 2 (T(R)-T_0). 
$$

\item 
If $T_\infty \in \R$ and $\lim_{r \rightarrow \infty} w(r) =0$, we have 
\begin{align}\label{eq:estimate for diam inf} 
d_g \left((r_0, y_0), (r_1, y_1) \right) \leq 2 T_\infty - T(r_0) - T(r_1). 
\end{align}
In particular, we have for  $R>0$ 
$$
{\rm diam}_{d_g} \{ (r,y) \in \R_{>0} \times Y \mid r \geq R \} \leq 2 (T_\infty - T(R)). 
$$ 
\end{enumerate}
\end{lemma}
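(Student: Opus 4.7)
The plan is to construct, for each part, an explicit three-segment path that routes the $Y$-direction traverse through a region where $w$ is small, so that the $Y$-contribution to the length is killed while the $r$-contribution is controlled by the finite limit of $T$.

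For part (1), fix any piecewise smooth curve $\tilde y : [0,1] \to Y$ from $y_0$ to $y_1$ of finite $g_Y$-length $L = L_{g_Y}(\tilde y)$; such a curve exists because $d_g$ is a metric, so the endpoints lie in the same path component (and projecting any finite-length path in $\R_{>0} \times Y$ to $Y$ yields one). For each $\epsilon \in (0, \min(r_0, r_1))$, define $c$ on $[0,3]$ by descending $(r_0, y_0) \mapsto (\epsilon, y_0)$ along the $r$-axis on $[0,1]$, traversing $(\epsilon, y_0) \mapsto (\epsilon, y_1)$ via $c(t) = (\epsilon, \tilde y(t-1))$ on $[1,2]$, then ascending $(\epsilon, y_1) \mapsto (r_1, y_1)$ along the $r$-axis on $[2,3]$. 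By the change of variable $u = (r \circ c)(t)$ used in Lemma \ref{lem:unif conti}, the first and third segments contribute $T(r_0) - T(\epsilon)$ and $T(r_1) - T(\epsilon)$ respectively; the middle segment has $r$ constant, so by (\ref{eq:def length}) it contributes exactly $\sqrt{w(\epsilon)}\, L$. Thus
\[
d_g((r_0, y_0), (r_1, y_1)) \leq L_g(c) = T(r_0) + T(r_1) - 2T(\epsilon) + \sqrt{w(\epsilon)}\, L.
\]
Letting $\epsilon \to 0$, the hypotheses $T_0 \in \R$ and $\lim_{r \to 0} w(r) = 0$ give $T(\epsilon) \to T_0$ and $\sqrt{w(\epsilon)}\, L \to 0$, yielding (\ref{eq:estimate for diam 0}). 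The diameter bound is then immediate: if $r_0, r_1 \leq R$, then $T(r_i) \leq T(R)$ by monotonicity of $T$, so the right-hand side is bounded by $2(T(R) - T_0)$.

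Part (2) is entirely symmetric. Now choose $\epsilon > \max(r_0, r_1)$ and use the same three-segment construction but with $\epsilon$ large; the same computation yields
\[
d_g((r_0, y_0), (r_1, y_1)) \leq T(\epsilon) - T(r_0) + T(\epsilon) - T(r_1) + \sqrt{w(\epsilon)}\, L,
\]
and letting $\epsilon \to \infty$ together with the hypotheses $T_\infty \in \R$ and $\lim_{r \to \infty} w(r) = 0$ produces (\ref{eq:estimate for diam inf}) and the corresponding diameter estimate.

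There is no real obstacle here: the argument is a direct construction, and the only point that needs checking is that the middle-segment length genuinely vanishes in the limit. This holds because we have fixed one path $\tilde y$ of fixed finite length $L$ and let $\sqrt{w(\epsilon)} \to 0$ by hypothesis, so the $Y$-traverse cost is driven to zero independently of the choice of $\tilde y$.
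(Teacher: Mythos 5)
Your proof is correct and follows essentially the same route as the paper: both construct a concatenated path that drops to small $r$ (resp.\ climbs to large $r$), traverses the $Y$-direction there where $\sqrt{w}$ is small, and returns, then pass to the limit. The only difference is cosmetic --- you hold $r$ constant on the middle leg (giving exactly $\sqrt{w(\epsilon)}\,L$), whereas the paper interpolates $r$ from $s r_0$ to $s r_1$ and picks up an extra $|T(sr_1)-T(sr_0)|$ term that also vanishes in the limit.
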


\begin{remark}
If $T_0 \in \R$, we easily see $\liminf_{r \rightarrow 0} w(r) =0$. 
However, $T_0 \in \R$ does not imply $\lim_{r \rightarrow 0} w(r) =0$. 

Indeed, setting $q=e^x$ for $x \in \R$ 
and defining $u:\R \rightarrow \R_{>0}$ by $u(x) = \sqrt{k w (e^x)}$, 
the condition $T_0 \in \R$ is equivalent to $\int_{-\infty}^{-1} u(x) dx < \infty$. 
Suppose that 
$u(x) = \frac{1}{x^2} +S(x)$ for $x \in (-\infty, -1]$, where $S: \R \rightarrow \R$ is given by 
$$
S(x) = \sum_{n \in \mathbb{Z} - \{ 0 \}} S_n (x), \qquad 
S_n (x) = 
\left\{ \begin{array}{ll}
n^2 \left( x-n+\frac{1}{n^2} \right)   & \mbox{for } x \in \left[ n-\frac{1}{n^2}, n \right], \\
n^2 \left( -x+n+\frac{1}{n^2} \right) &\mbox{for }  x \in \left[ n, n + \frac{1}{n^2} \right], \\
0 & \mbox{otherwise.} \\
\end{array} \right.
$$
Then we see that $\int_{-\infty}^{-1} u(x) dx < \infty$ and $\lim_{x \rightarrow -\infty} u(x) \neq 0$. 
Though the function $S$ is not smooth, 
we may replace $S$ with a smooth function which approximates $S$. 
Similar statement also holds for $T_\infty$. 
\end{remark}

\begin{proof}
For any path $c$ connecting $(r_0, y_0)$ and $(r_1, y_1)$, 
we have $d_g \left((r_0, y_0), (r_1, y_1) \right) \leq L_g (c)$. 
We will take the following path to show (\ref{eq:estimate for diam 0}) and 
(\ref{eq:estimate for diam inf}). 

Fixing $s >0$, define $c_1,c_2,c_3:[0,1] \rightarrow \R_{>0} \times Y$ by 
\begin{align*}
c_1(t)&=\left( \left( (s-1)t + 1 \right)r_0, y_0 \right), \\
c_2(t)&=\left( s \left( (r_1 - r_0) t + r_0 \right), \tilde{y}(t) \right), \\
c_3(t)&=\left( \left( (s-1) (1-t) + 1 \right)r_1, y_1 \right), 
\end{align*}
where $\tilde y: [0,1] \rightarrow Y$ is a path such that 
$\tilde y (0)=y_0$ and $\tilde y (1)=y_1$. 
That is, $c_1$ is a path connecting $(r_0, y_0)$ and $(s r_0, y_0)$, 
$c_2$ is a path connecting $(s r_0, y_0)$ and $(s r_1, y_1)$, and 
$c_3$ is a path connecting $(s r_1, y_1)$ and $(r_1, y_1)$. 
Define $c:[0,1] \rightarrow \R_{>0} \times Y$ by the concatenation of these paths: 
$$c = c_1 * c_2 * c_3.$$ 
Then we compute 
\begin{align*}
L_g(c_1) 
=& \int_0^1 r_0 |s-1| \frac{\sqrt{k w (r \circ c_1)}}{r \circ c_1} dt\\
=& \left| \int_0^1  r_0 (s-1) \frac{\sqrt{k w (r \circ c_1)}}{r \circ c_1} dt \right| 
= \left| \int_0^1 \frac{d}{dt} T((r \circ c_1)) dt \right| 
= |T(s r_0) - T(r_0)|. 
\end{align*}
Similarly, we obtain $L(c_3) = |T(s r_1) - T(r_1)|.$
We also have 
$$
L_g (c_2) \leq I_3 + I_4,
$$
where 
\begin{align*}
I_3 &= 
\int_0^1 |s (r_1- r_0)| \frac{\sqrt{k w (r \circ c_2)}}{r \circ c_2} dt
= \left| \int_0^1 \frac{d}{dt} T((r \circ c_2)) dt \right| 
= |T(s r_1) - T(s r_0)|, \\
I_4 &= \int_0^1 
\sqrt{ w (r \circ c_2) \left| \frac{d \tilde y}{d t} (t) \right|_{g_Y}^2 } dt
= 
\int_0^1 
\sqrt{w (r \circ c_2)} \left| \frac{d \tilde y}{d t} (t) \right|_{g_Y} dt. 
\end{align*}
Since $(r \circ c_2) (t) \in [ s {\rm min} \{r_0, r_1 \}, s {\rm max} \{r_0, r_1 \}]$ 
for any $t \in [0, 1]$, 
setting 
$$
C'''=C'''(s, r_0,r_1,w) = \max \left \{ \sqrt{w(r)} \mid r \in 
[s {\rm min} \{r_0, r_1 \}, s {\rm max} \{r_0, r_1 \}] \right \}, $$
we see that  
$$
I_4 \leq C''' L_{g_Y} (\tilde y). 
$$
Summarizing these estimates, we obtain 
\begin{align} \label{eq:diam estimate 0}
\begin{split}
d_g \left((r_0, y_0), (r_1, y_1) \right) 
\leq & 
|T(s r_0) - T(r_0)| + |T(s r_1) - T(r_1)| \\
& + |T(s r_1) - T(s r_0)| + C''' L_{g_Y} (\tilde y). 
\end{split}
\end{align}

Now suppose that $T_0 \in \R$ and $\lim_{r \rightarrow 0} w(r) =0$. 
Then we have $\lim_{s \rightarrow 0} C''' = 0$. 
Letting $s \rightarrow 0$ in (\ref{eq:diam estimate 0}), we obtain 
$$
d_g \left((r_0, y_0), (r_1, y_1) \right) 
\leq 
|T_0 - T(r_0)| + |T_0 - T(r_1)| = T(r_0) + T(r_1) -2 T_0. 
$$

Next, suppose that $T_\infty \in \R$ and $\lim_{r \rightarrow \infty} w(r) =0$. 
Then we have $\lim_{s \rightarrow \infty} C''' = 0$. 
Letting $s \rightarrow \infty$ in (\ref{eq:diam estimate 0}), we obtain 
$$
d_g \left((r_0, y_0), (r_1, y_1) \right) 
\leq 
|T_\infty - T(r_0)| + |T_\infty - T(r_1)| = 2 T_\infty -T(r_0) - T(r_1). 
$$
\end{proof}

From these lemmas, we can determine the metric completion of $\R_{>0} \times Y$ w.r.t. $d_g$. 

\begin{theorem}\label{thm:comp wp}
The metric completion $\overline{\R_{>0} \times Y}$ of $\R_{>0} \times Y$ 
w.r.t. the metric $d_g$ induced from the Riemannian metric $g=g(w)$ given in (\ref{eq:def gw}) 
is homeomorphic to the following. 
\begin{enumerate}
\renewcommand{\labelenumi}{(\arabic{enumi})} 
\item 
If $T_0 = - \infty$ and $T_\infty = \infty$, 
$$
\R_{>0} \times \overline{Y} \qquad \mbox{with the product topology. }
$$ 
\item 
If $T_0 \in \R$, $T_\infty = \infty$ and $\lim_{r \rightarrow 0} w(r) =0$, 
$$
(\{ 0 \} \cup \R_{> 0}) \times \overline{Y} / \left( \{ 0 \} \times \overline{Y} \right)
= \left(\R_{>0} \times \overline{Y} \right) \cup \{ * \}
$$
with the topology $\Oo_0$ given below. 
\item 
If $T_0 = - \infty$, $T_\infty \in \R$ and $\lim_{r \rightarrow \infty} w(r) =0$,  
$$
(\R_{> 0} \cup \{ \infty \} ) \times \overline{Y} / 
\left( \{ \infty \} \times \overline{Y} \right)
= \left(\R_{>0} \times \overline{Y} \right) \cup \{ * \} 
$$
with the topology $\Oo_\infty$ given below. 
\item 
If $T_0 \in \R$, $T_\infty \in \R$, $\lim_{r \rightarrow 0} w(r) =0$ and 
$\lim_{r \rightarrow \infty} w(r) =0$,  
$$
\left(\{ 0\} \cup \R_{> 0} \cup \{ \infty \} \right) \times \overline{Y} / 
\left( \{0, \infty \} \times \overline{Y} \right)
= \left(\R_{>0} \times \overline{Y} \right) \cup \{ * \} \cup \{ * \} 
$$
with the topology $\Oo_{0, \infty}$ given below. 

\end{enumerate}
Here, $\overline{Y}$ is the metric completion of $Y$ w.r.t. the metric $d_{g_Y}$ induced from $g_Y$. 
Let $\pi_0: 
(\{ 0 \} \cup \R_{> 0}) \times \overline{Y} \rightarrow 
(\{ 0 \} \cup \R_{> 0}) \times \overline{Y} / \left( \{ 0 \} \times \overline{Y} \right) 
$
be the projection. 
Set $*_0 = \pi_0 (\{ 0 \} \times \overline{Y})$. 
The topology $\Oo_0$ is defined by the 
fundamental system of neighborhoods $\Uu (x)$ given below. 
If $x \neq *_0$, $\Uu (x)$ consists of $\epsilon$-balls centered at $x$ for $\epsilon >0$ 
w.r.t. the product metric.   
If $x = *_0$, we set 
$$
\Uu (*_0) = \{ \pi_0 ([0, \epsilon) \times \overline{Y}) \mid \epsilon > 0 \}. 
$$

Let $\pi_\infty: 
(\R_{> 0} \cup \{ \infty \} ) \times \overline{Y} \rightarrow 
(\R_{> 0} \cup \{ \infty \} ) \times \overline{Y} / \left( \{ \infty\} \times \overline{Y} \right) 
$
be the projection. 
Set $*_\infty = \pi_\infty (\{ \infty \} \times \overline{Y})$. 
The topology $\Oo_\infty$ is given by the 
fundamental system of neighborhoods $\Uu (x)$ given below. 
If $x \neq *_\infty$, $\Uu (x)$ consists of $\epsilon$-balls centered at $x$ for $\epsilon >0$ 
w.r.t. the product metric.   
If $x = *_\infty$, we set 
$
\Uu (*_\infty) = \{ \pi_\infty ((R, \infty] \times \overline{Y}) \mid R > 0 \}. 
$
The topology $\Oo_{0, \infty}$ is similarly defined 
by setting the fundamental systems of neighborhoods as above. 
\end{theorem}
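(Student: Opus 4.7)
The plan is to classify $d_g$-Cauchy sequences $\{(r_k,y_k)\}$ up to equivalence and match the resulting set with the claimed topological spaces. The starting point is Corollary \ref{cor:conv r}, which tells us that $\{r_k\}$ converges in one of four partial compactifications of $\R_{>0}$, corresponding exactly to the four cases of the theorem. I then separate two regimes: sequences with $r_k \to r_\infty \in \R_{>0}$, and sequences with $r_k \to 0$ or $r_k \to \infty$ (the latter being possible only under the hypotheses of cases (2)--(4)).

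For the first regime, $\{r_k\}$ eventually lies in some compact $[R_1,R_2] \subset \R_{>0}$, so Lemma \ref{lem:R12}(1) forces $\{y_k\}$ to be $d_{g_Y}$-Cauchy, producing a limit $y_\infty \in \overline{Y}$ and hence a point $(r_\infty,y_\infty)$ of the completion. The two-sided bounds in Lemmas \ref{lem:unif conti} and \ref{lem:R12} show that two such sequences are equivalent iff they produce the same $(r_\infty,y_\infty)$, and that the $d_g$-topology agrees locally with the product topology on $\R_{>0} \times \overline{Y}$. This already accounts for case (1) and for the $\R_{>0} \times \overline{Y}$ part of cases (2)--(4).

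For the second regime, under the standing hypothesis $T_0 \in \R$ and $\lim_{r \to 0} w(r) = 0$, Lemma \ref{lem:diam}(1) gives
\[
d_g\bigl((r_k,y_k),(r_k',y_k')\bigr) \;\leq\; T(r_k) + T(r_k') - 2T_0,
\]
which tends to $0$ as $r_k,r_k' \to 0$ regardless of $y_k,y_k'$; hence all such sequences collapse to a single new point $*_0$, and every sequence with $r_k \to 0$ is automatically Cauchy. The symmetric argument using Lemma \ref{lem:diam}(2) produces $*_\infty$ under the hypotheses $T_\infty \in \R$ and $\lim_{r \to \infty} w(r) = 0$. Combining the two regimes yields the set-theoretic description in each of the four cases.

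Finally I identify the topologies. At interior points of $\R_{>0} \times \overline{Y}$, Lemma \ref{lem:R12} gives a bi-Lipschitz comparison between $d_g$ and the product metric, hence the product topology. At $*_0$, Lemma \ref{lem:diam}(1) gives $d_g((r,y),*_0) \leq T(r) - T_0$, so each slab $\pi_0([0,\epsilon) \times \overline{Y})$ sits inside a small $d_g$-ball around $*_0$; conversely Lemma \ref{lem:unif conti} forces $r$ to be small on any such $d_g$-ball, so every small $d_g$-ball is contained in such a slab. This matches $\Uu(*_0)$, and $\Uu(*_\infty)$ follows identically. The main technical obstacle is precisely this last step, verifying that the quotient topology on $(\{0\} \cup \R_{>0}) \times \overline{Y} / (\{0\} \times \overline{Y})$ at the collapsed point really coincides with the metric completion topology; this is exactly what the diameter estimates of Lemma \ref{lem:diam} combined with the Lipschitz control of Lemma \ref{lem:unif conti} deliver.
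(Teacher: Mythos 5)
Your proposal is correct and follows essentially the same route as the paper: classify $d_g$-Cauchy sequences via Corollary \ref{cor:conv r}, use Lemma \ref{lem:R12} for the two-sided comparison with the product metric away from $r=0,\infty$ (giving well-definedness, bijectivity and bicontinuity of the identification with $\R_{>0}\times\overline{Y}$), use Lemma \ref{lem:diam} to collapse all sequences with $r_k\to 0$ (resp.\ $\infty$) to a single point, and verify the neighborhood basis at the collapsed point by combining the diameter estimate with the lower bound of Lemma \ref{lem:unif conti}. The paper packages this as explicit maps $\Theta_1,\Theta_2$ with a small sub-lemma identifying $\Uu(*_0)$, but the content is the same.
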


\begin{remark}
Roughly speaking, 
the metric completion is the cylinder of $\overline{Y}$ in the case (1), 
the cone (with the apex) of $\overline{Y}$ in the cases (2) and (3), 
and the suspension of $\overline{Y}$ in the case (4). 

In general, the topologies $\Oo_0, \Oo_\infty$ and $\Oo_{0, \infty}$ are 
weaker than the quotient topologies. 
If $\overline{Y}$ is compact, they agree with the quotient topologies. 
In particular, 
in the case (4), the metric completion $\overline{\R_{>0} \times Y}$ 
is compact if $\overline{Y}$ is compact 
because there is a surjection from 
$\left(\{ 0\} \cup \R_{> 0} \cup \{ \infty \} \right) \times \overline{Y} \cong [0,1] \times \overline{Y}$. 
\end{remark}

\begin{proof}
Use the notation of Definition \ref{def:comp}. 
Consider the case (1). Define a map 
\begin{align} \label{eq:bij comp}
\Theta_1: \overline{\R_{>0} \times Y} \rightarrow \R_{>0} \times \overline{Y}, 
\qquad 
[ (r_k, y_k) ] \mapsto \left(\lim_{k \rightarrow \infty} r_k, [ y_k ] \right).
\end{align}
This map is well-defined. 
Indeed, by Corollary \ref{cor:conv r}, we have $\lim_{k \rightarrow \infty} r_k \in \R_{>0}$. 
Then we may assume that $\{ (r_k, y_k) \} \subset (R_1, R_2) \times Y$ for some $0<R_1<R_2$. 
Then Lemma \ref{lem:R12} (1) implies that 
$\{ y_k \}$ is a $d_{g_Y}$-Cauchy sequence. 
If 
$\lim_{k \rightarrow \infty} d_g ((r_k, y_k), (r'_k, y'_k)) =0$ for 
$d_g$-Cauchy sequences $\{(r_k, y_k) \}$ and $\{(r'_k, y'_k) \}$, 
Lemma \ref{lem:unif conti} and Lemma \ref{lem:R12} (1) 
imply that $\lim_{k \rightarrow \infty} r_k = \lim_{k \rightarrow \infty} r'_k$ 
and $\lim_{k \rightarrow \infty} d_g (y_k, y'_k) = 0$, 
and hence $\Theta_1$ is well-defined. 

We show that $\Theta_1$ is bijective. 
For any $\left(r_0, [ y_k ] \right) \in \R_{>0} \times \overline{Y}$, 
$\{ (r_0, y_k) \}$ is a $d_g$-Cauchy sequence by Lemma \ref{lem:R12} (2). 
Hence we see that $\Theta_1$ is surjective. 
Suppose that 
$\lim_{k \rightarrow \infty} r_k = \lim_{k \rightarrow \infty} r'_k$ 
and $\lim_{k \rightarrow \infty} d_g (y_k, y'_k) = 0$ for 
$d_g$-Cauchy sequences $\{(r_k, y_k) \}$ and $\{(r'_k, y'_k) \}$. 
Then Lemma \ref{lem:R12} (2) implies that 
$\lim_{k \rightarrow \infty} d_g ((r_k, y_k), (r'_k, y'_k)) =0$, 
and hence $\Theta_1$ is injective. 

We show that $\Theta_1$ is homeomorphic. 
Let $\{ [(r_{k j}, y_{k j})] \}_j$ be a sequence in $\overline{\R_{>0} \times Y}$ 
converging to $[(r_{k}, y_{k})]$. 
That is, 
$$\lim_{j \rightarrow \infty} \lim_{k \rightarrow \infty} 
d_g \left( (r_{k j}, y_{k j}), (r_k, y_k) \right) =0. 
$$
By Lemma \ref{lem:unif conti}, we have 
$\lim_{j \rightarrow \infty} \lim_{k \rightarrow \infty} |r_{k j} - r_k| =0$. 
Since $\lim_{k \rightarrow \infty} r_k >0$, we can apply Lemma \ref{lem:R12} (1) and 
it follows that 
$
\lim_{j \rightarrow \infty} \lim_{k \rightarrow \infty} d_{g_Y} (y_{k j}, y_k) = 0. 
$
Hence $\Theta_1$ is continuous. 

Let $\{ (r_{0 j}, [y_{k j}]) \}_j$ be a sequence in $\R_{>0} \times \overline{Y}$ 
converging to $(r_0, [y_{k}])$. 
Since $\Theta_1^{-1}(r_0, [y_k]) = [(r_0, y_k)]$, Lemma \ref{lem:R12} (2) implies that 
$$
\lim_{j \rightarrow \infty} d_g \left([(r_{0 j}, y_{k j})], [(r_0, y_k)] \right)
=
\lim_{j \rightarrow \infty} \lim_{k \rightarrow \infty} d_g ((r_{0 j}, y_{k j}), (r_0, y_k)) = 0. 
$$
Hence $\Theta_1^{-1}$ is continuous. 
\\

Next, we consider the case (2). 
Define a map 
$\Theta_2: \overline{\R_{>0} \times Y} \rightarrow 
(\{ 0 \} \cup \R_{> 0}) \times \overline{Y} / \left( \{ 0 \} \times \overline{Y} \right)$ by 
\begin{align*}
\Theta_2 ([ (r_k, y_k) ]) 
= 
\left\{ \begin{array}{ll}
\left(\lim_{k \rightarrow \infty} r_k, [ y_k ] \right) & \mbox{if} \quad \lim_{k \rightarrow \infty} r_k >0, \\
*_0 & \mbox{if} \quad \lim_{k \rightarrow \infty} r_k = 0. \\
\end{array} \right.
\end{align*}
This map is well-defined and bijective. 
Indeed, 
Corollary \ref{cor:conv r} implies that $\lim_{k \rightarrow \infty} r_k \in \{ 0\} \cup \R_{> 0}$. 
Every $d_g$-Cauchy sequence with $\lim_{k \rightarrow \infty} r_k > 0$ 
corresponds to an element of $\R_{>0} \times \overline{Y}$ 
as in the case (1). 
For $d_g$-Cauchy sequences $\{(r_k, y_k) \}$ and $\{(r'_k, y'_k) \}$ 
such that $\lim_{k \rightarrow \infty} r_k = \lim_{k \rightarrow \infty} r'_k = 0$,  
Lemma \ref{lem:diam} (1) implies that 
$\lim_{k \rightarrow \infty} d_g ((r_k, y_k), (r'_k, y'_k)) =0$. 
Hence $\Theta_2$ is well-defined and bijective. 
 
We show that $\Theta_2$ is homeomorphic. 
Denote by $*$ the unique equivalence class $[(r_k, y_k)] \in \overline{\R_{>0} \times Y}$ 
such that $\lim_{k \rightarrow \infty} r_k =0$. 
By (1), we see that 
$\Theta_2|_{\overline{\R_{>0} \times Y} - \{ * \} }: 
\overline{\R_{>0} \times Y} - \{ * \} \rightarrow \R_{>0} \times \overline{Y}$ is homeomorphic. 
To prove the continuity of $\Theta_2$ at $*$, 
we prove the following. 

\begin{lemma}
The fundamental system of neighborhoods at  $*$ w.r.t. the topology 
induced from $d_g$ is given by  
$$
\{ U_\epsilon \mid \epsilon > 0 \} \qquad 
\mbox{where} \qquad 
U_\epsilon = \left\{ [(r_k, y_k)] \in \overline{\R_{>0} \times Y} 
\ \middle| \ \lim_{k \rightarrow \infty} r_k  < \epsilon \right \}. 
$$
\end{lemma}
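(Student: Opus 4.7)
The plan is to verify the two inclusions that characterize a fundamental system of neighborhoods: for every $\delta > 0$ there exists $\epsilon > 0$ with $U_\epsilon \subset B_\delta(*)$, and conversely for every $\epsilon > 0$ there exists $\delta > 0$ with $B_\delta(*) \subset U_\epsilon$. Here $B_\delta(*)$ denotes the $d_g$-ball of radius $\delta$ centered at $*$ in the completion, where $d_g$ is extended to $\overline{\R_{>0} \times Y}$ by continuity in the standard way, i.e. $d_g([(r_k,y_k)],[(r_k',y_k')]) = \lim_{k \to \infty} d_g((r_k,y_k),(r_k',y_k'))$.

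For the inclusion $U_\epsilon \subset B_\delta(*)$, I would take any $\xi = [(r_k,y_k)] \in U_\epsilon$, so by definition $r^* := \lim_{k\to\infty} r_k < \epsilon$ (this limit exists in $\{0\} \cup \R_{>0}$ by Corollary \ref{cor:conv r}, and is strictly less than $\epsilon$). Pick a representative $\{(r_k',y_k')\}$ of $*$ with $r_k' \to 0$. Applying Lemma \ref{lem:diam} (1) we get
\[
d_g((r_k',y_k'),(r_k,y_k)) \leq T(r_k') + T(r_k) - 2T_0,
\]
and passing to the limit $k \to \infty$ yields $d_g(*,\xi) \leq T(r^*) - T_0 \leq T(\epsilon) - T_0$. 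Since $T$ is continuous and strictly increasing with $T(r) \to T_0$ as $r \to 0$, choosing $\epsilon$ so that $T(\epsilon) - T_0 < \delta$ gives the desired $U_\epsilon \subset B_\delta(*)$.

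For the reverse inclusion $B_\delta(*) \subset U_\epsilon$, I would use Lemma \ref{lem:unif conti} instead: for any $\xi = [(r_k,y_k)] \in B_\delta(*)$ and any representative $\{(r_k',y_k')\}$ of $*$ with $r_k' \to 0$,
\[
d_g((r_k',y_k'),(r_k,y_k)) \geq |T(r_k) - T(r_k')|,
\]
so taking $k \to \infty$ yields $d_g(*,\xi) \geq T(r^*) - T_0$. Hence $\xi \in B_\delta(*)$ forces $T(r^*) < T_0 + \delta$, and again by continuity and strict monotonicity of $T$ near $0$, choosing $\delta$ with $T_0 + \delta < T(\epsilon)$ forces $r^* < \epsilon$, i.e. $\xi \in U_\epsilon$.

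I do not expect a serious obstacle here: the two estimates Lemma \ref{lem:unif conti} (lower bound on $d_g$ by $|T(r_0)-T(r_1)|$) and Lemma \ref{lem:diam} (1) (upper bound by $T(r_0)+T(r_1)-2T_0$ under the hypotheses $T_0 \in \R$ and $\lim_{r\to 0} w(r) = 0$ that are in force for case (2)) dovetail exactly to give matching upper and lower bounds for $d_g(*,\xi)$ in terms of $T(r^*) - T_0$. The only point that needs mild care is the passage to limits inside the completion — one must check that $d_g(*,\xi)$ is independent of the choice of representatives, but this is a standard consequence of the triangle inequality applied to Cauchy sequences, and in fact is implicit in the preceding parts of the proof of Theorem \ref{thm:comp wp}.
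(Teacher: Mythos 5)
Your proof is correct and follows essentially the same route as the paper: the key inclusion $U_\epsilon \subset B_\delta$ is obtained exactly as in the paper from the diameter estimate (\ref{eq:estimate for diam 0}) of Lemma \ref{lem:diam} (1) together with the continuity of $T$ at $0$. Your additional verification of the reverse inclusion $B_\delta \subset U_\epsilon$ via the lower bound of Lemma \ref{lem:unif conti} is a point the paper leaves implicit (it reduces immediately to showing $U_\epsilon \subset B_\delta$, taking for granted that each $U_\epsilon$ is a neighborhood), so making it explicit is a harmless and slightly more complete presentation of the same argument.
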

\begin{proof}
Since $(\overline{\R_{>0} \times Y}, d_g)$ is a metric space, 
the fundamental system of neighborhoods at $*$ consists of 
the $\delta$-balls $B_\delta$ centered at $*$ for $\delta > 0$. 
Hence we only have to show that for any $\delta >0$, there exists $\epsilon >0$ 
such that $U_\epsilon \subset B_\delta$. 

Since the function $T$ in (\ref{eq:def T}) is continuous 
at $0$ under the assumption of (2), 
for any $\delta >0$, there exists $\epsilon >0$ such that 
$r < \epsilon \Rightarrow T(r) - T_0 < \delta$. Then (\ref{eq:estimate for diam 0}) 
implies that for any $[(r_k, y_k)] \in U_\epsilon$, 
$$
d_g (*, [(r_k, y_k)]) \leq \lim_{k \rightarrow \infty} T(r_k) - T_0 < \delta, 
$$
which implies that $U_\epsilon \subset B_\delta$.
\end{proof}

Then since 
$\Theta_2 (U_\epsilon) = \pi_0 ([0, \epsilon) \times \overline{Y})$, 
we see that $\Theta_2$ is continuous at $*$ 
and $\Theta_2^{-1}$ is continuous at $*_0$. 
We can prove (3) and (4) similarly. 
\end{proof}

Finally, we give a description of $\overline{Y}$ in terms of $\overline{\R_{>0} \times Y}$. 
The following implies that 
we can recover $\overline{Y}$ from $\overline{\R_{>0} \times Y}$. 

\begin{proposition}\label{prop:comp 1}
Use the notation of Definition \ref{def:comp}. 
For any $R>0$, the map 
$$
I_R:\overline{Y} \rightarrow \left\{ [ (r_k, y_k) ] \in \overline{\R_{>0} \times Y} 
\ \middle| \ \lim_{k \rightarrow \infty} r_k = R \right \}, 
\qquad 
[ y_k ] \mapsto [ (R, y_k) ]
$$
is homeomorphic.
\end{proposition}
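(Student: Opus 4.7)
The plan is to deduce all four required properties of $I_R$ (well-definedness, bijectivity, and bicontinuity) from the two localized Lipschitz-type estimates in Lemma \ref{lem:R12}, exploiting the fact that $R > 0$ lets us fix an interval $(R_1, R_2)$ with $0 < R_1 < R < R_2$ so that tails of the relevant $d_g$-Cauchy sequences lie in $(R_1, R_2) \times Y$. First, for any $d_{g_Y}$-Cauchy sequence $\{y_k\}$ in $Y$, Lemma \ref{lem:R12} (2) gives $d_g((R, y_k), (R, y_\ell)) \leq C'' d_{g_Y}(y_k, y_\ell)$, so $\{(R, y_k)\}$ is $d_g$-Cauchy; the same inequality applied across two sequences shows that equivalent $d_{g_Y}$-Cauchy sequences map to equivalent $d_g$-Cauchy sequences. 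Hence $I_R$ is well-defined, and the image lies in the claimed subset since the first coordinate is constantly $R$.

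For injectivity, suppose $[(R, y_k)] = [(R, y_k')]$, i.e.\ $\lim_{k\to\infty} d_g((R, y_k), (R, y_k')) = 0$. Since the first coordinate is constant at $R \in (R_1, R_2)$, Lemma \ref{lem:R12} (1) applies for all sufficiently large $k$ and gives $d_{g_Y}(y_k, y_k') \leq C' d_g((R, y_k), (R, y_k')) \to 0$, so $[y_k] = [y_k']$. For surjectivity, take $[(r_k, y_k)] \in \overline{\R_{>0} \times Y}$ with $\lim_{k\to\infty} r_k = R$. For large $k, \ell$ we have $r_k, r_\ell \in (R_1, R_2)$, so Lemma \ref{lem:R12} (1) applied to the tail of the $d_g$-Cauchy sequence yields that $\{y_k\}$ is $d_{g_Y}$-Cauchy. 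Moreover, Lemma \ref{lem:R12} (2) gives $d_g((r_k, y_k), (R, y_k)) \leq |T(r_k) - T(R)|$, which tends to zero by continuity of $T$; hence $[(r_k, y_k)] = [(R, y_k)] = I_R([y_k])$.

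Continuity of $I_R$ is immediate from the inequality $d_g((R, y_k), (R, y_k')) \leq C'' d_{g_Y}(y_k, y_k')$ applied to a convergent sequence in $\overline{Y}$, and continuity of $I_R^{-1}$ follows from Lemma \ref{lem:R12} (1) together with Lemma \ref{lem:unif conti}: the latter ensures that for a convergent sequence in the target subset the first coordinates of the representatives can be arranged to lie in $(R_1, R_2)$, so that the estimate $d_{g_Y} \leq C' d_g$ applies with a uniform constant. The main bookkeeping obstacle is the double limit inherent in comparing equivalence classes of Cauchy sequences with a convergent sequence of such classes; I would address this by passing to representatives whose indices are large enough to guarantee that the tails lie in $(R_1, R_2) \times Y$, so that the constants $C'$ and $C''$ in Lemma \ref{lem:R12} are uniform over the range involved.
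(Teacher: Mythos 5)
Your proposal is correct and follows essentially the same route as the paper's proof: well-definedness and continuity of $I_R$ from Lemma \ref{lem:R12} (2), surjectivity from the estimate $d_g((r_k,y_k),(R,y_k))\leq|T(r_k)-T(R)|$, and injectivity and continuity of $I_R^{-1}$ from Lemma \ref{lem:R12} (1). Your extra care in localizing the tails to $(R_1,R_2)\times Y$ so that the constants $C'$, $C''$ and the $\delta$-hypothesis of Lemma \ref{lem:R12} (1) apply uniformly is a point the paper leaves implicit, but it does not change the argument.
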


\begin{proof}
The proof is similar to that of Theorem \ref{thm:comp wp}. 
Let $\{ y_k \}$ is a $d_{g_Y}$-Cauchy sequence. 
Then $\{ (R, y_k) \}$ is a $d_g$-Cauchy sequence by Lemma \ref{lem:R12} (2). 
Hence $I_R$ is well-defined. 

Let $\{(r_k, y_k) \}$ be a $d_g$-Cauchy sequence with 
$\lim_{k \rightarrow \infty} r_k = R$. 
Then Lemma \ref{lem:R12} (1) implies that 
$\{ y_k \}$ is a $d_{g_Y}$-Cauchy sequence. 
By Lemma \ref{lem:R12} (2), we have 
$$
d_g ((r_k, y_k), (R, y_k)) \leq |T(r_k) - T(R)| \rightarrow 0 
\qquad \mbox{as } \quad k \rightarrow \infty. 
$$
Then $\{(r_k, y_k) \} \sim \{(R, y_k) \}$, and hence $I_R$ is surjective.  

Suppose that 
$\lim_{k \rightarrow \infty} d_g ((R, y_k), (R, y'_k)) = 0$ 
for 
$d_{g_Y}$-Cauchy sequences $\{ y_k \}$ and $\{ y'_k \}$. 
Then Lemma \ref{lem:R12} (1) implies that 
$\lim_{k \rightarrow \infty} d_{g_Y} (y_k, y'_k) =0$, 
and hence $I_R$ is injective. 

We show that $I_R$ is homeomorphic. 
Let $\{ [y_{k j}] \}_j$ be a sequence in $\overline{Y}$ 
converging to $[y_{k}]$. 
Then by Lemma \ref{lem:R12} (2), 
$\lim_{j \rightarrow \infty} d_{g} ([(R, y_{k j}], [(R, y_k)]) = 0$, 
and hence $I_R$ is continuous.
By the proof above, we have $I_R^{-1}([(r_k, y_k)]) = [y_k]$. 
Then by Lemma \ref{lem:R12} (1), we see that $I_R^{-1}$ is continuous.
\end{proof}

\begin{remark}
Thus if we know $\overline{\R_{>0} \times Y}$, 
we see $\overline{Y}$. 
In particular, by Theorem \ref{thm:comp wp}, 
if we know $\overline{(\R_{>0} \times Y, d_{g(w)})}$, the metric completion of $\R_{>0} \times Y$
w.r.t. $d_{g(w)}$, for one $w$, 
we can obtain $\overline{(\R_{>0} \times Y, d_{g(\tilde w)})}$ for $\tilde w$ 
satisfying one of four assumptions in Theorem \ref{thm:comp wp}. 
\end{remark}


\section{Conformal transformations of the pseudo-Riemannian metric of a homogeneous pair} \label{sec:hp}
\subsection{The splitting theorem}

In this section, 
we give the definition of a homogeneous pair 
for a pseudo-Riemannian metric $g$ and a positive function $f$
on a manifold $M$ admitting a free $\R_{>0}$-action in more detail. 
Then we study the geometric structures of 
the pseudo-Riemannian manifold $(M, (v \circ f) g)$, where 
$v: \R_{>0} \rightarrow \R_{>0}$ is a smooth function.

\begin{definition}[Definition \ref{def:homog pair intro}] \label{def:homog pair} 
Let $(M,g)$ be a pseudo-Riemannian manifold 
which admits a free $\R_{>0}$-action. 
Denote by $m:\R_{>0} \times M \rightarrow M$ the $\R_{>0}$-action 
and set $m_\lambda = m(\lambda, \cdot)$ for $\lambda \in \R_{>0}.$ 
Let $P \in \mathfrak{X}(M)$ be a vector field generated by the $\R_{>0}$-action. That is, 
$$
P_x = \left. \frac{d}{dt} m(e^t, x) \right|_{t=0} 
$$
for $x \in M$. 
Suppose that 
$f: M \rightarrow \R_{>0}$ is a smooth function and 
$\alpha \in \R - \{ 0 \}$. 

The pair $(g, f)$ is called a 
{\bf homogeneous pair} of degree $\alpha$ if 
\begin{align}
m_\lambda^* g &= \lambda^\alpha g , \label{eq:homog g}\\
m_\lambda^* f &= \lambda^\alpha f, \label{eq:homog f}\\
g(P, \cdot) &= d f \label{eq:thm assump}
\end{align}
for any $\lambda > 0$. 
\end{definition}

\begin{remark}
The degree of $g$ must be equal to that of $f$. 
That is, if 
$m_\lambda^* g = \lambda^\alpha g$ and 
$m_\lambda^* f = \lambda^\beta f$, 
the equation $g(P, \cdot) = d f$ implies that $\alpha = \beta$. 

Indeed, by $g(P, \cdot) = d f$, we have 
$$
m_\lambda^* (g(P, \cdot)) = m_\lambda^* d f = \lambda^\beta d f. 
$$

Since 
$P_{m_\lambda(x)} = \left. \frac{d}{dt} m_{e^t} m_\lambda (x) \right|_{t=0} = (m_\lambda)_* P_{x}$ 
for any $x \in M$, 
we compute 
$$
m_\lambda^* (g(P, \cdot)) 
= (g \circ m_\lambda) (P \circ m_\lambda, (m_\lambda)_* (\cdot))
= (m_\lambda^* g)(P, \cdot)
= \lambda^\alpha g(P, \cdot)
= \lambda^\alpha d f.
$$
Hence we obtain $\alpha = \beta$. 
\end{remark}

We first show that $(M, (v \circ f) g)$ admits the structure of a warped product. 
This is a generalization of  
the splitting theorem for Hessian manifolds that are cones given in 
\cite[Theorem 1]{Loftin} and \cite[Lemmas 2.1 and 2.4]{Totaro} 
(cf. Remark \ref{rem:TotaroLoftin}).

\begin{theorem} \label{thm:split}
Let $(M,g)$ be a pseudo-Riemannian manifold 
which admits a free $\R_{>0}$-action 
and let $f: M \rightarrow \R_{>0}$ be a smooth function. 
Suppose that $(g, f)$ is a homogeneous pair of degree $\alpha$. 
Then 
\begin{enumerate}
\renewcommand{\labelenumi}{(\arabic{enumi})}
\item
we have 
$(df)_x \neq 0$ for any $x \in M$. Thus for any $l >0$ 
$$
M_l = \{ x \in M \mid f (x) = l \} 
$$
is a submanifold of $M$. 
Denote by $g_l$ the pullback of $g$ to $M_l$.  
Then $g_l$ is a pseudo-Riemannian metric on $M_l$.

\item For a function $v: \R_{>0} \rightarrow \R_{>0}$, 
the map  
\begin{align} \label{eq:psi}
\psi :\R_{>0} \times M_l \rightarrow M, \qquad 
(r, y) \mapsto m \left( \left(\frac{r}{l} \right)^{\frac{1}{\alpha}}, y \right) 
\end{align}
gives an isometry between 
$\left(\R_{>0} \times M_l, v (r) \left( \frac{1}{\alpha r} dr^2 
+ \frac{r}{l} g_l \right) \right)$
and $(M,(v \circ f) g)$. 
\end{enumerate}
\end{theorem}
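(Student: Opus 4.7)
The plan is to separate the two parts cleanly: first extract the identity $g(P,P)=\alpha f$ from the three defining axioms, which immediately gives (1); then verify (2) by a direct pullback computation on the obvious candidate inverse to $\psi$.

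\textbf{Step 1 (the key identity $P(f)=\alpha f$).} Differentiating $m_\lambda^* f = \lambda^\alpha f$ in $\lambda$ at $\lambda = 1$, and using that the infinitesimal generator of $m_{e^t}$ is $P$, yields $P(f) = \alpha f$. Pairing this with the assumption $g(P,\cdot) = df$ gives
\[
g(P,P) \;=\; df(P) \;=\; P(f) \;=\; \alpha f,
\]
which is nowhere zero since $f>0$ and $\alpha\neq 0$.

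\textbf{Step 2 (proof of (1)).} Because $(df)_x(P_x)=\alpha f(x)\neq 0$, we have $(df)_x\neq 0$ for every $x$, so $M_l=f^{-1}(l)$ is a codimension-one submanifold. The tangent space is $T_xM_l = \ker(df)_x = \{v\in T_xM : g_x(P_x,v)=0\} = (P_x)^\perp$. Since $g_x(P_x,P_x)=\alpha l\neq 0$ on $M_l$, standard linear algebra (orthogonal complement of a non-null vector in a nondegenerate form is nondegenerate, with the full space splitting as $\R P_x\oplus(P_x)^\perp$) gives that $g_l=g|_{TM_l}$ is a pseudo-Riemannian metric on $M_l$.

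\textbf{Step 3 (the map $\psi$ is a diffeomorphism).} For $x\in M$, the equation $f(m_\lambda(x))=\lambda^\alpha f(x)=l$ has the unique solution $\lambda=(l/f(x))^{1/\alpha}$, so each $\R_{>0}$-orbit meets $M_l$ in exactly one point; combined with freeness of the action, the inverse $x\mapsto\bigl(f(x),\,m_{(l/f(x))^{1/\alpha}}(x)\bigr)$ is a smooth inverse to $\psi$. Also $f(\psi(r,y))=(r/l)^{\alpha/\alpha}\cdot l = r$, i.e.\ $\psi^*f=r$, so $\psi^*((v\circ f)g)=v(r)\psi^*g$ and it suffices to identify $\psi^*g$.

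\textbf{Step 4 (pullback computation).} Writing $\lambda(r)=(r/l)^{1/\alpha}$, I compute $\lambda'(r)=\lambda(r)/(\alpha r)$ and, from $\partial_\lambda m_\lambda(y)=\lambda^{-1}P_{m_\lambda(y)}$, obtain
\[
\psi_*(\partial_r)\big|_{(r,y)} \;=\; \frac{\lambda'(r)}{\lambda(r)}\,P_{\psi(r,y)} \;=\; \frac{1}{\alpha r}\,P_{\psi(r,y)},
\qquad \psi_*(v) \;=\; (m_{\lambda(r)})_*v \quad (v\in T_yM_l).
\]
Then using Step 1, the three components of $\psi^*g$ read:
\[
(\psi^*g)(\partial_r,\partial_r) = \frac{1}{\alpha^2r^2}\,g(P,P) = \frac{1}{\alpha^2r^2}\cdot\alpha r = \frac{1}{\alpha r},
\]
\[
(\psi^*g)(\partial_r,v) = \frac{1}{\alpha r}\,df\bigl((m_{\lambda})_*v\bigr) = \frac{\lambda^\alpha}{\alpha r}\,df_y(v) = 0,
\]
because $v\in T_yM_l=\ker df_y$, and finally
\[
(\psi^*g)(v_1,v_2) = (m_{\lambda(r)}^*g)(v_1,v_2) = \lambda(r)^\alpha\,g_l(v_1,v_2) = \frac{r}{l}\,g_l(v_1,v_2).
\]
Assembling these three pieces gives $\psi^*g = \tfrac{1}{\alpha r}dr^2 + \tfrac{r}{l}g_l$, and multiplying by $v(r)=v(\psi^*f)$ yields (2).

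The step I expect to require the most care is the derivation of the formula for $\psi_*(\partial_r)$ in terms of $P$, since it demands chasing the chain rule through the exponential reparametrization used to define $P$; everything else is forced once $g(P,P)=\alpha f$ and the homogeneity $m_\lambda^*g=\lambda^\alpha g$ are in hand.
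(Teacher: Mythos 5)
Your proposal is correct and follows essentially the same route as the paper: the identity $df(P)=\alpha f$ from differentiating the homogeneity of $f$, the orthogonal splitting $T_yM=\ker(df)_y\oplus\R P_y$ for part (1), the explicit inverse $x\mapsto(f(x),m_{(l/f(x))^{1/\alpha}}(x))$, and the same three pullback computations for part (2). The only (cosmetic) difference is that you evaluate $(\psi^*g)(\p_r,\p_r)$ directly at the image point via $g(P,P)=\alpha f=\alpha r$, whereas the paper transports back to the level set with $m_\lambda^*g=\lambda^\alpha g$ and uses $g(P_y,P_y)=\alpha l$; both give $\tfrac{1}{\alpha r}$.
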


\begin{remark} \label{rem:indep l}
For $l_1,l_2 >0$, the diffeomorphism 
$$
\psi_{l_1,l_2}: M_{l_1} \rightarrow M_{l_2}, \qquad x \mapsto 
m \left( \left( \frac{l_2}{l_1} \right)^{1/\alpha}, x \right) 
$$
gives an isometry $(M_{l_1}, g_{l_1}/l_1) \cong (M_{l_2}, g_{l_2}/l_2)$ 
by (\ref{eq:homog g}).  
Hence the isometry (\ref{eq:psi}) 
is independent of $l>0.$ 
\end{remark}

\begin{remark}
We do not use the local coordinates to prove Theorem \ref{thm:split}. 
Thus the statement formally holds when $M$ is infinite dimensional. 

The subtle point in the infinite dimensional case 
is the notion of submanifolds. 
In (1), we use implicit function theorem 
to prove that $M_l$ is a submanifold of $M$ by $(df)_x \neq 0$. 
However, 
there is no implicit function theorem in the infinite dimensional case in general. 
(For example, if $M$ is a Banach manifold, there is an implicit function theorem.) 
For the details of the theory of infinite dimensional manifolds, 
see \cite{KM, Lang}. 
\end{remark}

\begin{remark} \label{rem:hp wp}
By Theorem \ref{thm:split}, we have an isometry between 
$(M, (v \circ f) g)$ and 
$\left(\R_{>0} \times M_l, v(r) \left( \frac{1}{\alpha r} dr^2 + \frac{r}{l} g_l \right) \right)$. 
Setting 
$$k = \frac{l}{\alpha}, \qquad
w(r) =\frac{r v(r)}{l} \qquad \mbox{and} \qquad (Y,g_Y) = (M_l, g_l),$$  
this pseudo-Riemannian metric is of the form $g(w)$ in (\ref{eq:def gw}). 
\end{remark}

\begin{proof}[Proof of Theorem \ref{thm:split}]
First, we prove (1). 
For any $x \in M$, we compute 
\begin{align}\label{eq:df P}
(d f)_x (P_x) 
= (d f)_x \left( \left. \frac{d}{d t} m(e^t, x) \right|_{t=0} \right)
\stackrel{(\ref{eq:homog f})}
= \left. \frac{d}{d t} e^{\alpha t} f(x) \right|_{t=0}
= \alpha f(x). 
\end{align}
Since $f$ is a positive function, we see that $(df)_x \neq 0$. 
For $y \in M_l$, we have the decomposition
\begin{align} \label{eq:decomp}
T_y M = T_y M_l \oplus \R P_y = \ker (df)_y \oplus \R P_y, \qquad  
A= \left(A- \frac{(df)_y (A)}{\alpha l} P_y \right) + \frac{(df)_y (A)}{\alpha l} P_y, 
\end{align}
which is orthogonal by (\ref{eq:thm assump}). 
Then it is immediate to see that $g_l$ is a pseudo-Riemannian metric on $M_l$. \\

Next, we prove (2). 
Since the inverse $\psi^{-1}:M \rightarrow \R_{>0} \times M_l$ is given by 
\begin{align}\label{eq:psi inv}
\psi^{-1}(x) = \left(f(x), m \left(\left( \frac{l}{f(x)} \right)^{\frac{1}{\alpha}}, x \right) \right), 
\end{align}
we see that $\psi$ is a diffeomorphism. 
We compute $\psi^* ((v \circ f) g)$ to show that $\psi$ is an isometry. 
For $(r, y) \in \R_{>0} \times M_l$, we have  
$$
\psi^* (v \circ f) (r,y) 
= v \left(f \left(m \left( \left(\frac{r}{l} \right)^{\frac{1}{\alpha}}, y \right) \right) \right)
\stackrel{(\ref{eq:homog f})}
= v(r). 
$$
Thus we only have to compute 
$(\psi^* g)_{(r,y)} (\p_r, \p_r), (\psi^* g)_{(r,y)} (\p_r, a)$, and 
$(\psi^* g)_{(r,y)} (a, a)$ for any $a \in \ker (df)_y$. 
Since 
\begin{align*}
(\psi_*)_{(r,y)} (\p_r) 
&= 
\left. \frac{d}{d s} m \left( \left( \frac{r+s}{l}\right)^{\frac{1}{\alpha}}, y \right) \right|_{s=0}\\
&=
\left. \frac{d}{d s} 
m_{\left( \frac{r}{l} \right)^{\frac{1}{\alpha}}} \circ 
m_{\left( 1+\frac{s}{r}\right)^{\frac{1}{\alpha}}} (y) 
\right|_{s=0}
=
\frac{1}{\alpha r} (m_{\left( \frac{r}{l} \right)^{\frac{1}{\alpha}}})_* P_y, 
\end{align*}
we have by (\ref{eq:homog g}), (\ref{eq:thm assump}) and (\ref{eq:df P})
$$
(\psi^* g)_{(r, y)} (\p_r, \p_r) 
= \frac{1}{(\alpha r)^2} \frac{r}{l}  g(P_y,P_y)
= \frac{1}{(\alpha r)^2} \cdot \frac{r}{l} \cdot \alpha l
= \frac{1}{\alpha r}. 
$$
Since 
$$
(\psi_*)_{(r,y)} a = (m_{\left( \frac{r}{l} \right)^{\frac{1}{\alpha}}})_* a, 
$$
we have $(\psi^* g)_{(r,y)} (\p_r, a)=0$ by (\ref{eq:homog g}) and (\ref{eq:thm assump}). 
By (\ref{eq:homog g}), we obtain 
$$
(\psi^* g)_{(r,y)} (a, a) = \frac{r}{l} \cdot g(a, a).
$$
Hence the proof is completed. 
\end{proof}

Note that there is the following isometry between 
$(M, (v \circ f) g)$ and $(M, (\tilde v \circ f) g)$ for some $\tilde v: \R_{>0} \rightarrow \R_{>0}$. 
Hence they have the same sectional curvature, geodesics and 
the metric completion. 

\begin{lemma} \label{lem:symm homog met}
In the setting of Theorem \ref{thm:split}, 
the pseudo-Riemannian manifolds 
$(M, (v \circ f) g)$ and $\left( M, \frac{1}{f^2} v (\frac{1}{f}) g \right)$ 
are isometric via 
$\psi_M: M \rightarrow M$ defined by $\psi_M (x) = m(f(x)^{-2/\alpha}, x)$. 
\end{lemma}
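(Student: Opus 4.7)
The plan is to reduce the lemma to the warped-product involution $\sigma:(r,y)\mapsto(1/r,y)$ of Remark \ref{rem:symm gw} by conjugating with the splitting isometry $\psi$ of Theorem \ref{thm:split}. The two key observations are that $\psi^{-1}\circ\psi_M\circ\psi=\sigma$, and that under the correspondence $v\leftrightarrow w(r)=rv(r)/l$ of Remark \ref{rem:hp wp}, the replacement $v\mapsto \tilde v$ with $\tilde v(r)=v(1/r)/r^2$ corresponds to $w\mapsto w(1/\,\cdot\,)$.

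First I would verify $f\circ\psi_M=1/f$ using the homogeneity (\ref{eq:homog f}): $f(\psi_M(x))=f(m(f(x)^{-2/\alpha},x))=f(x)^{-2}f(x)=1/f(x)$. Next, fix $l>0$ and set $x=\psi(r,y)=m((r/l)^{1/\alpha},y)$. Since $f(y)=l$ and $f$ is homogeneous of degree $\alpha$, we get $f(x)=r$, so
$\psi_M(x)=m(r^{-2/\alpha}(r/l)^{1/\alpha},y)=m((1/(rl))^{1/\alpha},y)=\psi(1/r,y),$
which yields $\psi^{-1}\circ\psi_M\circ\psi=\sigma$.

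For the conformal factors, Remark \ref{rem:hp wp} gives $\psi^*((v\circ f)g)=g(w)$ with $w(r)=rv(r)/l$, and likewise $\psi^*((\tilde v\circ f)g)=g(\tilde w)$ with
$\tilde w(r)=r\tilde v(r)/l=v(1/r)/(rl)=w(1/r).$
Remark \ref{rem:symm gw} then says $\sigma$ is an isometry from $g(w)$ to $g(\tilde w)$. Composing with the two $\psi$-isometries gives $\psi_M:(M,(v\circ f)g)\to(M,(\tilde v\circ f)g)$ as an isometry, which is the desired conclusion.

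The main obstacle is simply the bookkeeping between the exponent $-2/\alpha$ in $\lambda(x)=f(x)^{-2/\alpha}$ and the swap $r\mapsto 1/r$, and translating the target factor $f^{-2}v(1/f)$ into the warping $w(1/r)$. Alternatively, one could argue directly: writing $(\psi_M)_*X=-\frac{2}{\alpha f(x)}df_x(X)\,P_{\psi_M(x)}+(m_{\lambda(x)})_*X$, and using $g(P,\cdot)=df$, $g_y(P_y,P_y)=\alpha f(y)=\alpha/f(x)$, and $m_\lambda^*g=\lambda^\alpha g$, the two cross terms cancel and one obtains $g_{\psi_M(x)}((\psi_M)_*X,(\psi_M)_*Y)=f(x)^{-2}g_x(X,Y)$; multiplying by $(\tilde v\circ f)(\psi_M(x))=f(x)^2 v(f(x))$ then gives the claim.
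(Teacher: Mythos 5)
Your proposal is correct and follows essentially the same route as the paper: both reduce the lemma to the involution $j:(r,y)\mapsto(1/r,y)$ of Remark \ref{rem:symm gw} conjugated by the splitting isometry $\psi$ of Theorem \ref{thm:split}, with the conformal factor tracked via $w(r)=rv(r)/l$. You additionally verify explicitly that $\psi_M=\psi\circ j\circ\psi^{-1}$ (i.e.\ that $\psi_M(\psi(r,y))=\psi(1/r,y)$), a computation the paper asserts without writing out.
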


\begin{proof}
Recall Remark \ref{rem:hp wp}. 
By Remark \ref{rem:symm gw}, 
we have an isometry 
\begin{align*}
\left(\R_{>0} \times M_l, v(r) \left( \frac{1}{\alpha r} dr^2 + \frac{r}{l} g_l \right) \right)
\cong &
\left(\R_{>0} \times M_l, v \left(\frac{1}{r} \right) 
\left( \frac{r}{\alpha} \left( d  \left(\frac{1}{r} \right) \right)^2 + \frac{1}{l r} g_l \right) \right)\\
= & 
\left(\R_{>0} \times M_l, 
\frac{1}{r^2} v \left(\frac{1}{r} \right) 
\left( \frac{1}{\alpha r} d r^2 + \frac{r}{l} g_l \right) \right)
\end{align*}
via $j: (r,y) \mapsto (1/r,y).$
Since the map $\psi$ in (\ref{eq:psi}) gives an isometry 
between 
$\left(\R_{>0} \times M_l, 
\frac{1}{r^2} v \left(\frac{1}{r} \right) \left( \frac{1}{\alpha r} d r^2 + \frac{r}{l} g_l \right) \right)
$
and $\left( M, \frac{1}{f^2} v (\frac{1}{f}) g \right)$, 
the map $\psi_M = \psi \circ j \circ \psi^{-1}: M \rightarrow M$ 
gives an isometry between $(M, (v \circ f) g)$ and $\left( M, \frac{1}{f^2} v (\frac{1}{f}) g \right)$. 
\end{proof}

\begin{definition}
Given a homogeneous pair $(g,f)$ of degree $\alpha \in \R - \{ 0,1 \}$, 
define a new pseudo-Riemannian metric $\hat g$ by 
\begin{align}\label{eq:def hat g}
\hat g = \frac{df \otimes df}{f} + (1 - \alpha) g. 
\end{align}
\end{definition}
As we see below, $(\hat g, f)$ is also a homogeneous pair of degree $\alpha$. 
This pseudo-Riemannian metric appears in many examples. 
See Sections \ref{sec:Hess} and \ref{sec:app}. 
The signature of $\hat g$ is different from that of $g$, 
and hence we can produce a definite pseudo-Riemannian metric 
in the sense of Definition \ref{def:def} in some cases. 

\begin{lemma}
The tensor $\hat g$ is a pseudo-Riemannian metric. 
The pair $(\hat g, f)$ is also a homogeneous pair of degree $\alpha$. 
\end{lemma}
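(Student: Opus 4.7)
The plan is to verify the two claims—non-degeneracy of $\hat g$ and the three homogeneous-pair conditions—by exploiting the orthogonal decomposition $T_y M = T_y M_l \oplus \R P_y$ from (\ref{eq:decomp}), which is orthogonal with respect to $g$ precisely because of the identity $g(P,\cdot) = df$.

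First, to show $\hat g$ is a pseudo-Riemannian metric, I would decompose everything along this splitting. Since $df$ vanishes on $\ker(df)_y = T_y M_l$, the tensor $df \otimes df$ contributes nothing on $T_y M_l \times T_y M_l$ or on $T_y M_l \times \R P_y$, so on these pieces $\hat g$ is simply $(1-\alpha) g$, which is a non-degenerate bilinear form on $T_y M_l$ as long as $\alpha \neq 1$. On the remaining piece, I use (\ref{eq:df P}), namely $df(P) = \alpha f$, to compute
\[
\hat g(P,P) \;=\; \frac{(df(P))^2}{f} + (1-\alpha)\, g(P,P) \;=\; \alpha^2 f + (1-\alpha)\,\alpha f \;=\; \alpha f,
\]
which is nonzero because $\alpha \neq 0$ and $f > 0$. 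The mixed blocks vanish because $\hat g(P, A) = df(A) + (1-\alpha)\,df(A)\cdot 0 = 0$ for $A \in T_y M_l$. Thus $\hat g$ is block-diagonal and non-degenerate, hence a pseudo-Riemannian metric.

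Next, for the homogeneity condition $m_\lambda^* \hat g = \lambda^\alpha \hat g$, I would pull back term-by-term: $m_\lambda^*(df) = d(m_\lambda^* f) = \lambda^\alpha df$ gives $m_\lambda^*(df\otimes df) = \lambda^{2\alpha}\, df\otimes df$, while $m_\lambda^*(1/f) = \lambda^{-\alpha}/f$, so the first summand transforms by $\lambda^\alpha$; the second summand transforms by $\lambda^\alpha$ by hypothesis (\ref{eq:homog g}). The condition $m_\lambda^* f = \lambda^\alpha f$ is inherited directly. Finally, for $\hat g(P,\cdot) = df$, I would compute, for any tangent vector $X$,
\[
\hat g(P, X) \;=\; \frac{df(P)\, df(X)}{f} + (1-\alpha)\, g(P,X) \;=\; \alpha\, df(X) + (1-\alpha)\, df(X) \;=\; df(X),
\]
again using (\ref{eq:df P}) and (\ref{eq:thm assump}).

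The only point requiring genuine care is non-degeneracy, which is really just the linear-algebra observation that the hypothesis $\alpha \in \R - \{0, 1\}$ is exactly what makes both blocks $(1-\alpha) g_l$ and $\alpha f$ invertible; everything else reduces to a mechanical application of the three defining identities of a homogeneous pair together with the scalar identity $df(P) = \alpha f$.
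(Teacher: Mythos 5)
Your proof is correct and follows essentially the same route as the paper: both rest on the orthogonal decomposition $T_yM = T_yM_l \oplus \R P_y$ from (\ref{eq:decomp}) together with the key identities $\hat g(P,P)=\alpha f$, $\hat g(P,a)=0$ for $a\in\ker(df)$, and $\hat g|_{\ker(df)}=(1-\alpha)g$; the paper phrases non-degeneracy as a kernel computation (if $\hat g(kP+a,\cdot)=0$ then pairing with $P$ forces $k=0$ and then $a=0$), which is just your block-diagonal argument in different clothing. Your explicit verification of the homogeneity conditions matches what the paper dismisses as ``clear to see.''
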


\begin{proof}
Recalling the decomposition (\ref{eq:decomp}), 
suppose that $\hat g (k P + a, \cdot) = 0$ for $k \in \R$ and $a \in \ker (d f)$. 
Then we have 
$$
0 
= \hat g (k P + a, P) 
= \frac{k (df (P))^2}{f} + (1-\alpha) g(k P + a, P) 
\stackrel{(\ref{eq:thm assump}), (\ref{eq:df P})}
= k \alpha^2 f + k (1-\alpha) \alpha f 
= k \alpha f, 
$$
and hence we have $k=0$. 
Then it follows that $\hat g (a, \cdot) = (1-\alpha) g(a, \cdot)=0$, 
which implies that $a=0$. 
Hence $\hat g$ is a pseudo-Riemannian metric. 

It is clear to see that $m_\lambda^* \hat g = \lambda^\alpha \hat g$. 
By (\ref{eq:thm assump}) for $(g, f)$ and (\ref{eq:df P}), 
we see that $\hat g(P, \cdot) = df$. 
\end{proof}

By the definition of $\hat g$, we see that 
$\hat g_l = (1-\alpha) g_l$. Then by Theorem \ref{thm:split}, we have an isometry 
$$
(M, \hat g) \cong 
\left(\R_{>0} \times M_l, \frac{1}{\alpha r} dr^2 + (1-\alpha) \frac{r}{l} g_l \right). 
$$
Comparing this decomposition with 
$(M, g) \cong \left(\R_{>0} \times M_l, \frac{1}{\alpha r} dr^2 + \frac{r}{l} g_l \right)$, 
the definiteness of $\hat g$ is characterized in terms of the signature of $g$ as follows. 

\begin{lemma}
Setting $n= \dim M$, we have the following. 
\begin{enumerate}
\renewcommand{\labelenumi}{(\arabic{enumi})}
\item 
When $\alpha >1$, $g$ has signature $(1,n-1)$ if and only if 
$\hat g$ is positive definite.

\item
When $0 < \alpha <1$, $g$ is positive definite if and only if 
$\hat g$ is positive definite. 

\item
When $\alpha <0$, $g$ is negative if and only if 
$\hat g$ is negative definite. 
\end{enumerate}
\end{lemma}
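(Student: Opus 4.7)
The plan is to reduce the statement to a sign analysis by invoking the splitting theorem (Theorem \ref{thm:split}) together with the identity $\hat g_l = (1-\alpha) g_l$ recorded just before the statement. This yields the two parallel warped-product presentations
\begin{align*}
(M, g) &\cong \left(\R_{>0} \times M_l,\; \tfrac{1}{\alpha r}\, dr^2 + \tfrac{r}{l}\, g_l\right), \\
(M, \hat g) &\cong \left(\R_{>0} \times M_l,\; \tfrac{1}{\alpha r}\, dr^2 + (1-\alpha)\tfrac{r}{l}\, g_l\right),
\end{align*}
which differ only in the coefficient in front of $g_l$. Since the metrics are block-diagonal with respect to the splitting $T(\R_{>0} \times M_l) = \R \partial_r \oplus T M_l$, the definiteness and signature of each metric is completely determined by (a) the sign of the scalar coefficient of $dr^2$, and (b) the signature of the (rescaled) induced metric on $M_l$, with a sign flip whenever the rescaling constant is negative.

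The strategy is then to read off these signs case by case. In each case, I would observe that $\frac{1}{\alpha r}$ and $\frac{r}{l}$ have fixed signs, while the sign of $1-\alpha$ flips between positive (cases (2), (3)) and negative (case (1)), and express both statements as equivalent conditions on the signature of $g_l$ on $M_l$ (whose dimension is $n-1$). For instance, in case (1) with $\alpha > 1$, the coefficient $\frac{1}{\alpha r}$ is positive and $1-\alpha < 0$, so $\hat g$ is positive definite precisely when $g_l$ is negative definite; feeding this back into the expression for $g$, whose $M_l$-block is $\frac{r}{l} g_l$ with positive prefactor, yields exactly signature $(1, n-1)$. Cases (2) and (3) are handled by the same elementary sign bookkeeping: in (2) every coefficient is positive so both definiteness conditions collapse to positive definiteness of $g_l$, and in (3) one has $\frac{1}{\alpha r} < 0$ and $1-\alpha > 0$, so both $\hat g$ negative definite and $g$ negative definite are equivalent to $g_l$ being negative definite.

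There is essentially no obstacle here: the whole content of the lemma lies in translating signature questions through the explicit splittings above. The only minor care needed is to check that the coefficients $\frac{1}{\alpha r}$, $\frac{r}{l}$, and $(1-\alpha)\frac{r}{l}$ are indeed nowhere zero on $\R_{>0}$ under the stated hypotheses on $\alpha$ (so that no degeneracy arises and the signature is truly the direct sum of the two block signatures), and to record that $\dim M_l = n-1$ so that signature $(1, n-1)$ in (1) matches one positive direction along $\partial_r$ and $n-1$ negative directions along $T M_l$.
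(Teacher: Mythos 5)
Your proposal is correct and matches the paper's intended argument exactly: the paper gives no separate proof of this lemma but derives it from precisely the comparison of the two warped-product decompositions $(M,g)\cong\left(\R_{>0}\times M_l,\ \frac{1}{\alpha r}dr^2+\frac{r}{l}g_l\right)$ and $(M,\hat g)\cong\left(\R_{>0}\times M_l,\ \frac{1}{\alpha r}dr^2+(1-\alpha)\frac{r}{l}g_l\right)$ stated immediately before it, followed by the same sign bookkeeping you carry out. Nothing further is needed.
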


\subsection{The sectional curvature} \label{subsec:csc met}

Let $(g, f)$ be a homogeneous pair on a manifold $M$. 
By Remark \ref{rem:hp wp}, we can apply results in Section \ref{sec:wp}. 
First, by Proposition \ref{prop:tech} (3), 
we can find a function $v: \R_{>0} \rightarrow \R_{>0}$ 
such that $(v \circ f) g$ has the constant sectional curvature 
if the level set $(M_l, g_l)$ has constant sectional curvature.

\begin{proposition} \label{prop:conf csc}
Use the notation of Definition \ref{def:w} and Theorem \ref{thm:split}. 
Let $(g, f)$ be a homogeneous pair of degree $\alpha$. 
Suppose that $g_l$  
has constant sectional curvature $\hat C_l \in \R$: $K^{g_l} = \hat C_l$.

Then given $C \in \R$, 
$(v \circ f) g$ has constant sectional curvature $C$ if 
$$
v(r) = \frac{1}{r} w \left(\frac{C}{\alpha}, \frac{l \hat C_l}{\alpha}, C_2, r \right), 
$$ 
where $C_2 \in \R$ such that 
$
\left(\frac{C}{\alpha}, \frac{l \hat C_l}{\alpha}, C_2 \right) 
\in \Delta_1 \cup \Delta_2 \cup \Delta_3$. 
\end{proposition}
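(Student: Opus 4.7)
The plan is to reduce the statement directly to Proposition \ref{prop:tech}(3) via the splitting Theorem \ref{thm:split} and the identification in Remark \ref{rem:hp wp}. First I would apply Theorem \ref{thm:split} to identify $(M,(v\circ f)g)$ isometrically with the warped product
$$\left(\R_{>0}\times M_l,\ v(r)\left(\tfrac{1}{\alpha r}\,dr^2+\tfrac{r}{l}g_l\right)\right),$$
and then use Remark \ref{rem:hp wp}: setting $k=l/\alpha$, $w(r)=rv(r)/l$, and $(Y,g_Y)=(M_l,g_l)$, this becomes the pseudo-Riemannian manifold $(\R_{>0}\times M_l, g(w))$ in the sense of \eqref{eq:def gw}. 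At this point the problem of producing a conformally related metric of constant sectional curvature $C$ on $M$ is exactly the problem addressed by Proposition \ref{prop:tech}(3) applied to $g(w)$.

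Invoking Proposition \ref{prop:tech}(3), the metric $g(w)$ has constant sectional curvature $C$ precisely when $w=w(kC,C_1,C_2,\cdot)$ for some $(C_1,C_2)$ with $(kC,C_1,C_2)\in\Delta_1\cup\Delta_2\cup\Delta_3$ \emph{and} $g_Y=g_l$ has constant sectional curvature $C_1/k$. By hypothesis $K^{g_l}=\hat C_l$, so the compatibility condition forces $C_1/k=\hat C_l$, i.e.\ $C_1=k\hat C_l=l\hat C_l/\alpha$; meanwhile $kC=lC/\alpha$. Translating back through $v(r)=l\,w(r)/r$, this gives
$$v(r)=\frac{l}{r}\,w\!\left(\frac{lC}{\alpha},\ \frac{l\hat C_l}{\alpha},\ C_2,\ r\right).$$

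It remains to reconcile this formula with the form stated in the proposition, where the first argument of $w$ is $C/\alpha$ rather than $lC/\alpha$ and the prefactor is $1/r$ rather than $l/r$. The key observation is the scaling identity
$$l\cdot w\!\left(\frac{lC}{\alpha},\ C_1,\ C_2,\ r\right)=w\!\left(\frac{C}{\alpha},\ C_1,\ C_2,\ r\right),$$
for each $C_1$, which I would verify case by case using the explicit formulas in Definition \ref{def:w}: on $\Delta_1$ and $\Delta_3$ the factor $l$ cancels between the numerator $C_1$ and the denominator $s$ in $w(s,C_1,C_2,r)=C_1/(s(\cdots)^2)$, and on $\Delta_2$ the first argument does not enter $w$ at all. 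Simultaneously, one checks that $(lC/\alpha,l\hat C_l/\alpha,C_2)$ lies in $\Delta_i$ iff $(C/\alpha,l\hat C_l/\alpha,C_2)$ does, since multiplication by $l>0$ preserves signs. Substituting the identity yields the desired form $v(r)=r^{-1}w(C/\alpha,l\hat C_l/\alpha,C_2,r)$.

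The only mildly non-routine point is the parameter bookkeeping in the last step, so I would execute it carefully but briefly on each of the three subdomains $\Delta_i$; no genuinely new idea beyond Proposition \ref{prop:tech}(3) is required.
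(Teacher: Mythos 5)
Your proposal is correct and follows essentially the same route as the paper's proof: reduce via Remark \ref{rem:hp wp} to Proposition \ref{prop:tech}(3) with $k=l/\alpha$ and $w(r)=rv(r)/l$, then absorb the factor $l$ into $w$ using the explicit formulas of Definition \ref{def:w}. One small caveat on the bookkeeping step: on $\Delta_2$ the identity $l\,w(0,C_1,C_2,r)=w(0,C_1,C_2,r)$ is false pointwise precisely \emph{because} the first argument does not enter the formula (so nothing cancels the factor $l$); it holds only after replacing $C_2$ by $C_2+\log l$, which is harmless since $C_2$ is a free parameter, and the paper glosses over the same point.
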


\begin{proof}
By Remark \ref{rem:hp wp} and Proposition \ref{prop:tech} (3), 
$(v \circ f) g$ has constant sectional curvature $C$ if 
$$
v(r) 
= \frac{l}{r} w \left(\frac{l C}{\alpha}, C_1, C_2, r \right) 
= \frac{1}{r} w \left(\frac{C}{\alpha}, C_1, C_2, r \right) 
\qquad \mbox{and} \qquad 
\frac{C_1 \alpha}{l} = \hat C_l, 
$$ 
where 
$C_2 \in \R$ such that 
$
\left(\frac{l C}{\alpha}, C_1, C_2 \right) 
\in \Delta_1 \cup \Delta_2 \cup \Delta_3$. 
The last equation of $v(r)$ follows by Definition \ref{def:w}. 
\end{proof}

\begin{remark}
Remark \ref{rem:indep l} implies that $l K^{g_l}$ is independent of $l>0$ 
because $K^{g_l/l} = l K^{g_l}.$
Thus if $K^{g_l} = \hat C_l$, $l \hat C_l$ is independent of $l>0$. 

The function $v(r)$ given in Proposition \ref{prop:conf csc} 
is defined for all $r>0$ when $\alpha C \geq 0$. 
When $\alpha C < 0$, it is only defined 
on the complement of the discrete set of $\R_{>0}$. 
\end{remark}

To apply Proposition \ref{prop:conf csc}, 
$(M_l, g_l)$ needs to have constant sectional curvature. 
This is the case if $g$ is flat. 
The following is a generalization of \cite[Corollaries 2.2 and 2.3]{Totaro}. 

\begin{lemma} \label{lem:sc flat}
Use the notation of Theorem \ref{thm:split}. 
Let $(g, f)$ be a homogeneous pair of degree $\alpha$. 

\begin{enumerate}
\renewcommand{\labelenumi}{(\arabic{enumi})}
\item
We have 
$$
K^g (a,b) = \frac{l}{r} \left( K^{g_l} (a,b) - \frac{\alpha}{4 l} \right)
$$
for linearly independent $a,b \in TM_l$.
\item 
The pseudo-Riemannian metric $g$ is flat if and only if 
$g_l$ has constant sectional curvature $\frac{\alpha}{4 l}.$ 
\end{enumerate}
\end{lemma}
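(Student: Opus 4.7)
The plan is to deduce both statements from the warped-product description provided by Theorem \ref{thm:split}, together with the curvature formulas for warped products with one-dimensional base. By Remark \ref{rem:hp wp}, taking $v\equiv 1$ identifies $(M,g)$ isometrically with the warped product
\begin{equation*}
\left(\R_{>0}\times M_l,\ \tfrac{1}{\alpha r}\,dr^{2}+\tfrac{r}{l}\,g_{l}\right),
\end{equation*}
which is of the form studied in Section \ref{sec:wp} with $\xi(r)=\tfrac{1}{\alpha r}$, $\r(r)=\sqrt{r/l}$, and $(Y,g_Y)=(M_l,g_l)$. Hence both the sectional curvature and flatness questions can be handled purely at the level of this concrete warped product.

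For part (1), I would substitute the above $\xi$ and $\r$ directly into the formula (\ref{eq:wpsc3}) of Lemma \ref{lem:wpsc}. A short routine computation gives $\r^{2}=r/l$ and $(\r')^{2}/\xi=\alpha/(4l)$, so
\begin{equation*}
K^{g}(a,b)\;=\;\frac{1}{\r^{2}}\!\left(K^{g_{Y}}(a,b)-\frac{(\r')^{2}}{\xi}\right)\;=\;\frac{l}{r}\!\left(K^{g_{l}}(a,b)-\frac{\alpha}{4l}\right),
\end{equation*}
which is exactly the desired identity (since under $\psi$ a vector tangent to $M_{l}$ corresponds to a vector tangent to the $Y$-factor, and for such vectors $g_{Y}=g_{l}$). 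Alternatively, one can match $w(r)=r/l$ with Proposition \ref{prop:tech} (the parameters $C=0$, $C_{1}=1/4$, so that $C_{1}/k=\alpha/(4l)$) and read the identity off (\ref{eq:sec ab}); this latter viewpoint is convenient for part (2).

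For part (2), by Corollary \ref{cor:wpcsc} flatness of $g$ (i.e.\ $K^{g}\equiv 0$) is equivalent to the pair of conditions $K^{g}(\p_{r})=0$ and $K^{g}(a,b)=0$ for all linearly independent $a,b\in TM_{l}$. The first condition holds automatically for our particular $\xi,\r$: a short direct calculation of $-2\r''\xi+\r'\xi'$ shows the two summands cancel (equivalently, our $w$ corresponds to $C=0$ in Proposition \ref{prop:tech}(1)). The second condition, by part (1), is equivalent to $K^{g_{l}}(a,b)=\alpha/(4l)$ for every linearly independent pair, that is, $g_{l}$ has constant sectional curvature $\alpha/(4l)$.

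No step is really an obstacle: the work is all routine once the warped-product identification of Theorem \ref{thm:split} is in hand, and the only thing to be careful about is matching the parameters $(k,w,g_{Y})$ of Section \ref{sec:wp} with $(l/\alpha,\,r/l,\,g_{l})$ coming from the homogeneous pair, and verifying the automatic vanishing $K^{g}(\p_{r})=0$ so that Corollary \ref{cor:wpcsc} gives an ``if and only if.''
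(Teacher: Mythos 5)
Your proposal is correct and follows essentially the same route as the paper: the paper also sets $v=1$, identifies $w(r)=r/l=w(0,1/4,-\log l,r)$ in the notation of Definition \ref{def:w}, and reads off both parts from Proposition \ref{prop:tech} (2) and (3), which is exactly your "alternative" parameter-matching viewpoint. Your direct substitution into (\ref{eq:wpsc3}) and the verification that $K^{g}(\p_r)=0$ are just the unpacked form of that same argument, and all the computations ($(\r')^{2}/\xi=\alpha/(4l)$, the cancellation in $-2\r''\xi+\r'\xi'$) check out.
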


\begin{proof}
Suppose that $v=1$ in Remark \ref{rem:hp wp}. 
Since $w(r) = r/l = w(0,1/4,-\log l,r)$ in the notation of Definition \ref{def:w}, 
the statement follows from Proposition \ref{prop:tech} (2) and (3). 
\end{proof}

The following is immediate from 
Definition \ref{def:w}, Proposition \ref{prop:conf csc} and 
Lemma \ref{lem:sc flat}. 
The flatness of $g/f^2$ is also implied by Lemma \ref{lem:symm homog met}.

\begin{corollary} \label{cor:conf csc flat}
Let $(g, f)$ be a homogeneous pair of degree $\alpha$. 
Suppose that $g$ is flat. 
Then the following holds.  
\begin{itemize}
\item 
For $C \in \R$ such that $\alpha C >0$, set 
$$
v(r)= \frac{\alpha}{4 C r \left(\cosh (\frac{1}{2} (\log r + C_2)) \right)^2} 
\qquad (C_2 \in \R). 
$$
Then $(v \circ f) g$ has constant sectional curvature $C$. 

\item 
The pseudo-Riemannian metric $g/f^2$ is flat  on $M$. 
\end{itemize}
\end{corollary}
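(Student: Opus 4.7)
The plan is to derive both bullets directly from the three results immediately preceding the corollary, namely Lemma \ref{lem:sc flat} and Proposition \ref{prop:conf csc} for the first bullet, and Lemma \ref{lem:symm homog met} for the second. The crucial input is the flatness hypothesis, which by Lemma \ref{lem:sc flat} (2) forces the level set $(M_l, g_l)$ to have the explicit constant sectional curvature $\hat C_l = \alpha/(4l)$; in particular $l \hat C_l / \alpha = 1/4$, which is the single number needed to invoke Proposition \ref{prop:conf csc}.

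For the first bullet I would feed this value into Proposition \ref{prop:conf csc}: for $C \in \R$ with $\alpha C > 0$, the prescription there reads
$$
v(r) \;=\; \frac{1}{r}\, w\!\left(\frac{C}{\alpha},\, \frac{l \hat C_l}{\alpha},\, C_2,\, r\right)
\;=\; \frac{1}{r}\, w\!\left(\frac{C}{\alpha},\, \frac{1}{4},\, C_2,\, r\right).
$$
The sign hypothesis $\alpha C > 0$ is equivalent to $C/\alpha > 0$, so the triple $(C/\alpha, 1/4, C_2)$ belongs to $\Delta_1$, and the first branch of Definition \ref{def:w} gives
$$
w\!\left(\frac{C}{\alpha},\, \frac{1}{4},\, C_2,\, r\right)
\;=\; \frac{1/4}{(C/\alpha)\, \cosh\!\left(\tfrac{1}{2}(\log r + C_2)\right)^2}
\;=\; \frac{\alpha}{4C\, \cosh\!\left(\tfrac{1}{2}(\log r + C_2)\right)^2}.
$$
Dividing by $r$ reproduces the claimed formula for $v(r)$, and Proposition \ref{prop:conf csc} then certifies that $(v \circ f) g$ has constant sectional curvature $C$.

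For the second bullet, the cleanest route is Lemma \ref{lem:symm homog met}: specializing $v \equiv 1$ there supplies an isometry between $(M,g)$ and $\left(M, \tfrac{1}{f^2} g\right)$. Flatness is an isometry invariant, so flatness of $g$ transfers immediately to $g/f^2$. (One could alternatively recover this by letting $C \to 0$ in the first bullet, but the direct isometry is shorter.)

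There is essentially no obstacle to execution; the corollary is a bookkeeping consequence of the machinery already in place. The only point requiring care is tracking the sign condition $\alpha C > 0$ so as to land in the stratum $\Delta_1$ of Definition \ref{def:w}, where $w$ is given by the $\cosh$ expression rather than by a $\sin$ or by the logarithmic limiting case.
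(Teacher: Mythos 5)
Your proposal is correct and follows essentially the same route as the paper, which derives the corollary from Definition \ref{def:w}, Proposition \ref{prop:conf csc} and Lemma \ref{lem:sc flat}, and explicitly notes that the flatness of $g/f^2$ also follows from Lemma \ref{lem:symm homog met} exactly as you argue. The computation $l\hat C_l/\alpha = 1/4$ and the identification of the $\Delta_1$ branch under $\alpha C>0$ are precisely the intended bookkeeping.
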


If $g$ is definite in the sense of Definition \ref{def:def} 
and the bound of the sectional curvature of $g_l$ is given, 
we can give the bounds of the sectional curvature of $g$.

\begin{corollary} \label{cor:bound Hess}
Use the notation of Definitions \ref{def:w} and \ref{def:def}. 
Let $(g, f)$ be a homogeneous pair of degree $\alpha$. 
Suppose that $g$ is definite. 
Given $C \in \R$, set 
$$
v(r) 
= \frac{1}{r} w \left(\frac{C}{\alpha}, C_1, C_2, r \right), 
$$ 
where 
$C_1, C_2 \in \R$ such that 
$
\left(\frac{C}{\alpha}, C_1, C_2 \right) 
\in \Delta_1 \cup \Delta_2 \cup \Delta_3$. 
Then 
\begin{itemize}
\item
$K^{(v \circ f) g} \geq C$ when $l K^{g_l} \geq \alpha C_1$, and 
\item
 $K^{(v \circ f) g} \leq C$ when $l K^{g_l} \leq \alpha C_1$. 
\end{itemize}
Furthermore, 
$K^{(v \circ f) g} = C$ 
if $l K^{g_l} = \alpha C_1$. 
\end{corollary}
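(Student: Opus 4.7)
The plan is to apply the warped product framework from Section \ref{sec:wp} via the splitting theorem. First I would invoke Theorem \ref{thm:split}, which (as recorded in Remark \ref{rem:hp wp}) provides an isometry between $(M, (v \circ f) g)$ and $(\R_{>0} \times M_l, g(w_{\mathrm{eff}}))$ with $k = l/\alpha$, $(Y, g_Y) = (M_l, g_l)$, and $w_{\mathrm{eff}}(r) = r v(r)/l$. Substituting the prescribed $v$ gives $w_{\mathrm{eff}}(r) = \frac{1}{l}\, w(C/\alpha, C_1, C_2, r)$. Since isometries preserve sectional curvature, it suffices to bound $K^{g(w_{\mathrm{eff}})}$.

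Next I would verify that $w_{\mathrm{eff}}$ is of the form $w(kC, C_1, \tilde C_2, r) = w(lC/\alpha, C_1, \tilde C_2, r)$ for a suitable $\tilde C_2$, so that Proposition \ref{prop:tech} and Corollary \ref{cor:sc bound wp} apply with curvature parameter $C$ on the $\p_r$-slice. A case-by-case inspection of Definition \ref{def:w} makes this immediate: on $\Delta_1$ and $\Delta_3$ the factor $1/l$ is absorbed by replacing the first argument $C/\alpha$ with $lC/\alpha$ while keeping $C_1$ fixed (the $\cosh^2$ and $\sin^2$ structures depend only on $\sqrt{\pm C_1}(\log r + C_2)$, not on the first argument); on $\Delta_2$ the factor $1/l$ is absorbed into $C_2 \mapsto C_2 - \log l$. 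In every case, $C_1$ is preserved, so the parameter appearing in the bound is the same $C_1$ as in the statement.

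Having identified $w_{\mathrm{eff}} = w(lC/\alpha, C_1, \tilde C_2, \cdot)$, I would then invoke Corollary \ref{cor:sc bound wp}. Since $g$ is assumed definite and $v > 0$, the conformal rescaling $(v \circ f)g$ is also definite, and hence so is $g(w_{\mathrm{eff}})$ via the isometry of Theorem \ref{thm:split}, verifying the hypothesis of that corollary. It then yields $K^{g(w_{\mathrm{eff}})} \geq C$ (resp.\ $\leq C$) when $K^{g_Y} \geq C_1/k$ (resp.\ $\leq C_1/k$), with equality on the nose when $K^{g_Y} = C_1/k$. Translating $k = l/\alpha$, the condition $K^{g_l} \geq C_1/k$ becomes $l K^{g_l} \geq \alpha C_1$, giving the desired bound on $K^{(v\circ f)g}$.

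The main obstacle is the bookkeeping in the second paragraph: checking that the $1/l$ prefactor on $w(C/\alpha, C_1, C_2, r)$ can genuinely be re-expressed within the parameter family of Definition \ref{def:w} while leaving $C_1$ unchanged, across all three cases $\Delta_1, \Delta_2, \Delta_3$. Once this identification is made, the statement reduces to a direct citation of Corollary \ref{cor:sc bound wp}, with the factor $\alpha$ and $l$ in the final inequality accounting for $k = l/\alpha$.
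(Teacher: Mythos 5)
Your proposal is correct and follows essentially the same route as the paper: pass to the warped product via Remark \ref{rem:hp wp}, observe that $r v(r)/l = \tfrac{1}{l} w(C/\alpha, C_1, C_2, r)$ equals $w(lC/\alpha, C_1, \tilde C_2, r)$ within the family of Definition \ref{def:w}, and cite Corollary \ref{cor:sc bound wp} with $k = l/\alpha$. Your extra care on the $\Delta_2$ case (absorbing $1/l$ into $C_2 \mapsto C_2 - \log l$) is a minor refinement of the paper's one-line identification, which keeps $C_2$ unchanged; nothing of substance differs.
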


\begin{proof}
Suppose that $v(r) = \frac{1}{r} w \left(\frac{C}{\alpha}, C_1, C_2, r \right)$ 
in Remark \ref{rem:hp wp}. 
Then we have 
$$w(r) =\frac{r v(r)}{l} = \frac{1}{l} w \left(\frac{C}{\alpha}, C_1, C_2, r \right)
= w \left(\frac{l C}{\alpha}, C_1, C_2, r \right), $$  
and hence the statement follows by Corollary \ref{cor:sc bound wp}. 
\end{proof}

\begin{remark}
In particular, we can apply this when $g$ is flat. 
By Lemma \ref{lem:sc flat}, this is the case $K^{g_l} = \frac{\alpha}{4 l}$. 

More generally, if $K^g \geq 0$ (resp. $\leq 0$), we have 
$l K^{g_l} \geq \frac{\alpha}{4}$ (resp. $\leq \frac{\alpha}{4}$). 
Then by Corollary \ref{cor:bound Hess}, 
we have $K^{(v \circ f) g} \geq C$ (resp. $\leq C$)
for 
$v(r) 
= \frac{1}{r} w \left(\frac{C}{\alpha}, \frac{1}{4}, C_2, r \right)$, 
where $C_2 \in \R$ such that 
$
\left(\frac{C}{\alpha}, \frac{1}{4}, C_2 \right) \in \Delta_1 \cup \Delta_2 \cup \Delta_3$. 
\end{remark}

When $\dim M =2$, we do not need the 
assumption on $K^{g_l}$ by Corollary \ref{cor:2dim csc wp}. 
We can prove the following in the same way as Proposition \ref{prop:conf csc}. 

\begin{corollary} \label{cor:2dim csc Hess}
Use the notation of Definition \ref{def:w}. 
Let $(g, f)$ be a homogeneous pair of degree $\alpha$. 
Suppose that $\dim M =2$.

Then given $C \in \R$, 
$(v \circ f) g$ has constant sectional curvature $C$ if 
$$
v(r) = \frac{1}{r} w \left(\frac{C}{\alpha}, C_1, C_2, r \right), 
$$ 
where $C_1, C_2 \in \R$ such that 
$
\left(\frac{C}{\alpha}, C_1, C_2 \right) 
\in \Delta_1 \cup \Delta_2 \cup \Delta_3$. 
In particular, setting $C=0$, we see that 
$f^\beta g$ is flat for any $\beta \in \R$. 
\end{corollary}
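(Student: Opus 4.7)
The strategy is to reduce to the one-dimensional fibre warped product case of Corollary \ref{cor:2dim csc wp} via the splitting provided by Theorem \ref{thm:split}. First I would invoke Theorem \ref{thm:split} together with Remark \ref{rem:hp wp} to identify $(M, (v \circ f)g)$ isometrically with the warped product $g(w)$ on $\R_{>0} \times M_l$ of the form (\ref{eq:def gw}), with $k = l/\alpha$, $w(r) = r v(r)/l$, and fibre $(M_l, g_l)$. Since $\dim M = 2$, the level set $M_l$ is one-dimensional, which is precisely the regime in which Corollary \ref{cor:2dim csc wp} applies; crucially, in this regime no hypothesis on $K^{g_l}$ is needed, in contrast with Proposition \ref{prop:conf csc}.

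Next I would apply Corollary \ref{cor:2dim csc wp}: the warped product $g(w)$ has constant sectional curvature $C$ whenever $w(r) = w(kC, \tilde C_1, \tilde C_2, r)$ for $(\tilde C_1, \tilde C_2)$ with $(kC, \tilde C_1, \tilde C_2) \in \Delta_1 \cup \Delta_2 \cup \Delta_3$. Inverting the relation $w(r) = r v(r)/l$ produces $v(r) = \frac{l}{r}\, w(lC/\alpha, \tilde C_1, \tilde C_2, r)$, and I would check directly from Definition \ref{def:w} that this family agrees, after reparametrization of the constants, with the stated form $v(r) = \frac{1}{r}\, w(C/\alpha, C_1, C_2, r)$. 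This is a routine case check: in $\Delta_1$ and $\Delta_3$ the factor $l$ cancels with the $s = lC/\alpha$ appearing in the denominator of $w$, so $C_1 = \tilde C_1$ and $C_2 = \tilde C_2$; in $\Delta_2$ the factor $l$ is absorbed into $C_2$ through $C_2 = \tilde C_2 + \log l$. Since $\tilde C_1$ and $\tilde C_2$ range over all admissible values, this matches the parametrization of the corollary.

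For the ``in particular'' assertion, I would set $C = 0$, placing us in $\Delta_2$, where Definition \ref{def:w} gives $w(0, C_1, C_2, r) = e^{C_2} r^{\pm 2\sqrt{C_1}}$ and therefore $v(r) = e^{C_2} r^{\pm 2\sqrt{C_1} - 1}$. Given any $\beta \in \R$, the choice $C_2 = 0$ and $C_1 = ((\beta+1)/2)^2 \geq 0$, with the sign of $\pm$ selected according to whether $\beta \geq -1$ or $\beta < -1$, yields $v(r) = r^\beta$, so $f^\beta g$ is flat. There is no genuine obstacle here, as Proposition \ref{prop:tech} and Corollary \ref{cor:2dim csc wp} have absorbed all the analytic content; the only care required is the bookkeeping of constants when passing between the $v$- and $w$-parametrizations across the three regions $\Delta_1, \Delta_2, \Delta_3$.
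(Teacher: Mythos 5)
Your proposal is correct and follows essentially the same route as the paper: the paper's proof is precisely to combine Remark \ref{rem:hp wp} (giving $k=l/\alpha$, $w(r)=rv(r)/l$) with Corollary \ref{cor:2dim csc wp} and then repeat the constant-bookkeeping of the proof of Proposition \ref{prop:conf csc}, which is what you do. Your explicit verification of the ``in particular'' clause via $C_1=((\beta+1)/2)^2$ is also the intended (though unstated) computation.
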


Corollary \ref{cor:2dim csc Hess} implies the following, 
which is a generalization of \cite[Section 6]{Totaro} for Hessian manifolds.

\begin{remark} \label{rem:2dim decomp}
Suppose that 
$M=M_1 \times \cdots \times M_k$, 
where $\dim M_i \leq 2$ for any $i$. 
If $(g_i, f_i)$ is a homogeneous pair of degree $\alpha$ on $M_i$, 
$(g, f) = (g_1 + \cdots + g_k, f_1 + \cdots + f_k)$
is a homogeneous pair on $M$. 
Then 
we can construct constant sectional curvature pseudo-Riemannian metrics on $M$ by Corollary \ref{cor:2dim csc Hess}.
In particular, $g$ is flat. \\
\end{remark}

Now recall the pseudo-Riemannian metric $\hat g$ defined in (\ref{eq:def hat g}). 
Since $\hat g_l = (1- \alpha) g_l$, 
$\hat g_l$ has constant sectional curvature if $g_l$ does. 
In particular, 
we can further obtain the following in addition to Corollary \ref{cor:conf csc flat}.

\begin{corollary} \label{cor:conf csc Hessian}
Let $(g, f)$ be a homogeneous pair of degree $\alpha \in \R - \{ 0, 1 \}$. 
Suppose that $g$ is flat. Then the following holds.  

\begin{itemize}
\item When $\alpha <1$, 
\begin{itemize}
\item
for $C \in \R$ such that $\alpha C > 0$, set 
$$
v(r)=
\frac{\alpha}{4 (1 - \alpha) C \left(\cosh (\frac{1}{2 \sqrt{1 - \alpha}} (\log r + C_2)) \right)^2} 
\qquad (C_2 \in \R). 
$$
Then $(v \circ f) \hat g$ is a pseudo-Riemannian metric on $M$ 
which has constant sectional curvature $C$. 
\item
The pseudo-Riemannian metric $f^{\pm \frac{1}{\sqrt{1 - \alpha}}} \hat g$ is flat. 
\end{itemize}
\item When $\alpha >1$, 
set for $C < 0$ 
$$
v(r)=
\frac{\alpha}{4 (1- \alpha) C \left( \sin (\frac{1}{2 \sqrt{\alpha -1}} (\log r + C_2)) \right)^2} 
\qquad (C_2 \in \R).
$$
Then $(v \circ f) \hat g$ is a pseudo-Riemannian metric 
which has constant sectional curvature $C$ 
defined on 
$M - \bigcup_{N \in \mathbb{Z}} 
f^{-1} \left(\exp \left(2N \sqrt{\alpha - 1} \pi - C_2 \right) \right)$. 
\end{itemize}
\end{corollary}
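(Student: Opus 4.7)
The plan is to apply Proposition \ref{prop:conf csc} directly to the homogeneous pair $(\hat g, f)$, which is of degree $\alpha$ by the lemma preceding the corollary, so the only task is to determine the constant sectional curvature of the level set $(M_l, \hat g_l)$ and then to perform the appropriate case analysis on the parameter regions $\Delta_1, \Delta_2, \Delta_3$.

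First I would compute $K^{\hat g_l}$. Since $\hat g = \frac{df \otimes df}{f} + (1-\alpha) g$ and the kernel of $df$ restricted to $M_l$ is precisely $T M_l$, the pullback of $\hat g$ to $M_l$ is $\hat g_l = (1-\alpha) g_l$. The assumption that $g$ is flat together with Lemma \ref{lem:sc flat}(2) gives $K^{g_l} = \alpha/(4l)$, so by the standard scaling $K^{cg} = K^g/c$ for a constant $c$,
\begin{equation*}
\hat C_l := K^{\hat g_l} = \frac{1}{1-\alpha} \cdot \frac{\alpha}{4l} = \frac{\alpha}{4 l (1-\alpha)}.
\end{equation*}

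Next I would substitute this into Proposition \ref{prop:conf csc} applied to $(\hat g, f)$, which produces $(v \circ f) \hat g$ of constant sectional curvature $C$ provided
\begin{equation*}
v(r) = \frac{1}{r}\, w\!\left(\frac{C}{\alpha},\, \frac{1}{4(1-\alpha)},\, C_2,\, r\right),
\end{equation*}
subject to $(C/\alpha,\, 1/(4(1-\alpha)),\, C_2) \in \Delta_1 \cup \Delta_2 \cup \Delta_3$. The case analysis is then routine. When $\alpha < 1$ and $\alpha C > 0$, both $C/\alpha > 0$ and $1/(4(1-\alpha)) > 0$, placing us in $\Delta_1$, and the $\cosh$-branch of $w$ from Definition \ref{def:w} yields the stated formula after simplification with $\sqrt{C_1} = 1/(2\sqrt{1-\alpha})$. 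When $\alpha < 1$ and $C = 0$ we are in $\Delta_2$ with $C_1 = 1/(4(1-\alpha)) > 0$, so $w(0, C_1, C_2, r) = e^{C_2} r^{\pm 2\sqrt{C_1}} = e^{C_2} r^{\pm 1/\sqrt{1-\alpha}}$, which up to a positive constant gives the claimed flat metric $f^{\pm 1/\sqrt{1-\alpha}} \hat g$. When $\alpha > 1$ and $C < 0$, both entries $C/\alpha$ and $1/(4(1-\alpha))$ are negative, placing us in $\Delta_3$ with $C_1 < 0$, so $\sqrt{-C_1} = 1/(2\sqrt{\alpha-1})$ and the $\sin$-branch yields the stated formula; the excluded locus $\bigcup_N f^{-1}(\exp(2N\sqrt{\alpha-1}\pi - C_2))$ is exactly the preimage under $f$ of the zero set of $\sin(\sqrt{-C_1}(\log r + C_2))$, where $v$ blows up.

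There is no substantive obstacle: the content is entirely contained in Proposition \ref{prop:conf csc} and the scaling identity $K^{cg} = K^g/c$, and the work reduces to bookkeeping about signs and which $\Delta_i$ each parameter triple occupies. The only point requiring mild care is the final case, where one must identify the maximal open subset of $\R_{>0}$ on which the $\sin$-factor is nonzero and pull this back through $f$ to describe the domain of definition of $(v \circ f)\hat g$.
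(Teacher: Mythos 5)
Your approach is exactly the paper's: the proof in the text consists of precisely the observation that $\hat g_l = (1-\alpha) g_l$ together with $K^{g_l} = \frac{\alpha}{4l}$ from Lemma \ref{lem:sc flat} (2) gives $\frac{l K^{\hat g_l}}{\alpha} = \frac{1}{4(1-\alpha)}$, followed by substitution into Proposition \ref{prop:conf csc}. One point in your final bookkeeping deserves care, however. Proposition \ref{prop:conf csc} produces $v(r) = \frac{1}{r}\, w\bigl(\frac{C}{\alpha}, \frac{1}{4(1-\alpha)}, C_2, r\bigr)$, and the prefactor $\frac{1}{r}$ does not cancel: in the $\Delta_1$ branch the substitution yields $v(r) = \frac{\alpha}{4(1-\alpha)C\, r \left(\cosh\left(\frac{1}{2\sqrt{1-\alpha}}(\log r + C_2)\right)\right)^2}$, and in the $C=0$ branch it yields $e^{C_2} r^{-1 \pm \frac{1}{\sqrt{1-\alpha}}}$, which differs from $r^{\pm \frac{1}{\sqrt{1-\alpha}}}$ by the nonconstant factor $r^{-1}$, not by a positive constant as you assert. (Compare Corollary \ref{cor:conf csc flat}, where the analogous $v$ does carry the $\frac{1}{r}$ and the flat exponents come out as $-1 \pm 1$, i.e.\ $g$ and $g/f^2$.) So your derivation is the correct one, but your claim that it "yields the stated formula after simplification" papers over a genuine mismatch of a factor of $r^{-1}$ between what Proposition \ref{prop:conf csc} gives and what the corollary prints; either your matching step or the printed formulas must absorb that factor, and as written your proof does neither.
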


\begin{proof} 
Since $g$ is flat, we have $K^{g_l} = \frac{\alpha}{4 l}$ 
by Lemma \ref{lem:sc flat} (2). 
Since $\hat g_l = (1- \alpha) g_l$,  
it follows that  
$K^{\hat g_l} = \frac{\alpha}{4 (1 - \alpha) l}$, 
and hence 
$\frac{l K^{\hat g_l}}{\alpha} = \frac{1}{4 (1- \alpha)}$.  
Then by Proposition \ref{prop:conf csc}, 
it is straightforward to obtain the statement. 
\end{proof}

Finally, we give an application of Corollary \ref{cor:bound Hess}. 

\begin{corollary} \label{cor:bound log flat}
Let $(g, f)$ be a homogeneous pair of degree $\alpha >1$.  
Suppose further that
$K^{g} \geq 0$, and $\hat g$ is definite in the sense of Definition \ref{def:def}. 
Then we have 
$$
K^{f^\beta \hat g} \leq 0 \qquad \mbox{ for any } \beta \in \R. 
$$
\end{corollary}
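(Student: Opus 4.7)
The plan is to reduce the statement to Corollary \ref{cor:bound Hess} applied to the homogeneous pair $(\hat g, f)$ (which is again a homogeneous pair of degree $\alpha$, and is definite by hypothesis), targeting the curvature upper bound $C=0$.

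First I would extract the curvature of the level set from the hypothesis $K^{g} \geq 0$. By Lemma \ref{lem:sc flat} (1), for any linearly independent $a,b\in TM_l$ one has $K^g(a,b)=\frac{l}{r}\bigl(K^{g_l}(a,b)-\frac{\alpha}{4l}\bigr)$, and since this must be nonnegative at all points of the warped-product model, we obtain $l K^{g_l}\geq \alpha/4$. Because $\hat g_l=(1-\alpha)g_l$ and the sectional curvature rescales by the reciprocal of a constant, $K^{\hat g_l}=\frac{1}{1-\alpha}K^{g_l}$; here $\alpha>1$ makes $\frac{1}{1-\alpha}$ negative, so the inequality flips and yields
$$l K^{\hat g_l}\leq \frac{\alpha}{4(1-\alpha)}<0.$$

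Next I would recognise $f^\beta \hat g$ as a metric of the form $(v\circ f)\hat g$ with $v$ in the shape required by Corollary \ref{cor:bound Hess}. Choose $C=0$, $C_2=0$, and $C_1=\bigl(\frac{\beta+1}{2}\bigr)^2\geq 0$; then $(0,C_1,0)\in \Delta_2$, and picking the sign so that $\pm 2\sqrt{C_1}=\beta+1$ in the formula $w(0,C_1,C_2,r)=e^{C_2}r^{\pm 2\sqrt{C_1}}$ from Definition \ref{def:w} gives
$$v(r)=\frac{1}{r}\,w(0,C_1,0,r)=r^{\pm 2\sqrt{C_1}-1}=r^\beta,$$
so $(v\circ f)\hat g=f^\beta \hat g$.

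Finally I would invoke Corollary \ref{cor:bound Hess} for $(\hat g,f)$: its sufficient condition for $K^{(v\circ f)\hat g}\leq C=0$ is $l K^{\hat g_l}\leq \alpha C_1=\alpha\bigl(\frac{\beta+1}{2}\bigr)^2$, which holds trivially for every $\beta\in\mathbb R$ because the left side is strictly negative while the right side is $\geq 0$. This proves $K^{f^\beta \hat g}\leq 0$ for all $\beta\in\mathbb R$. No real obstacle appears; the whole argument hinges on the sign reversal in passing from $g_l$ to $\hat g_l$, which is where the assumption $\alpha>1$ enters and gives room for an arbitrary choice of $C_1\geq 0$ (hence of $\beta$).
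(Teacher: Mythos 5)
Your proof is correct and follows essentially the same route as the paper: extract $lK^{g_l}\geq \alpha/4$ from Lemma \ref{lem:sc flat}~(1), flip the inequality via $\hat g_l=(1-\alpha)g_l$ with $\alpha>1$ to get $lK^{\hat g_l}<0$, and then apply Corollary \ref{cor:bound Hess} to the homogeneous pair $(\hat g,f)$ with $C=0$ and a nonnegative $C_1$. In fact your parameter $C_1=\left(\tfrac{\beta+1}{2}\right)^2$ is the one that actually yields $v(r)=r^{\beta}$ (the paper's stated value $|\beta|/4$ appears to be a slip), though either choice satisfies the needed condition $\alpha C_1\geq 0$.
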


\begin{proof}
Since 
$K^{g} \geq 0$, 
Lemma \ref{lem:sc flat} (1) implies that 
$K^{g_l} \geq \frac{\alpha}{4 l}$.  
Since $\hat g_l = (1- \alpha) g_l$,  
it follows that  
$l K^{\hat g_l} = \frac{l K^{g_l}}{1- \alpha} \leq \frac{\alpha}{4 (1- \alpha)} <0$. 
 
On the other hand, for any $\beta \in \R$, 
we have $r^\beta = w(0, |\beta|/4, 0, r)$, where we use the notation of Definition \ref{def:w}. 
Since $\alpha \cdot |\beta|/4 \geq 0$, Corollary \ref{cor:bound Hess} implies that 
$K^{f^\beta \hat g} \leq 0$.
\end{proof}

\subsection{The geodesics}

If $v(r)=r^\beta$, where $\beta \in \R$, 
we can describe the geodesics of $(M, f^\beta g)$ in terms of those in $(M_l, g_l)$ 
by Proposition \ref{prop:geod wp explicit}.

\begin{proposition} \label{prop:geod explicit}
Let $(g, f)$ be a homogeneous pair of degree $\alpha$. 
The geodesic $\gamma: (-\epsilon, \epsilon) \rightarrow  M$ 
with the initial position $x_0 \in M_l \subset M$ and 
the initial velocity $A \in T_{x_0} M$ 
w.r.t. the pseudo-Riemannian metric $f^\beta g$, where $\beta \in \R$, 
is given as follows. 

\begin{itemize}
\item
When $\beta \neq -1$, 
$$
\gamma (t) = m \left( \mu(\beta,t)^{\frac{1}{\alpha (\beta +1)}}, \ 
\hat y_l \left( \int_0^t \frac{d \tau}{\mu (\beta, \tau)} \right) \right), 
$$
where 
$$
\mu (\beta,t)= 1+ \frac{df (A)}{l} (\beta +1) t + \frac{\alpha}{4 l} g(A,A) (\beta +1)^2 t^2
$$
and  $\hat y_l (s)$ is the geodesic in $(M_l, g_l)$ 
with the initial position $x_0 \in M_l$ and the initial velocity $A-\frac{df(A)}{\alpha l} P \in T_{x_0} M_l$, 
the $T_{x_0} M_l$ component of $A$ in the decomposition (\ref{eq:decomp}). 
\item
When $\beta=-1$, 
$$
\gamma (t) = m \left( e^{\frac{df (A)}{\alpha l} t}, \hat y_l (t) \right). 
$$
\end{itemize}
\end{proposition}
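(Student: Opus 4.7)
The plan is to reduce to Proposition \ref{prop:geod wp explicit} via the isometry of Theorem \ref{thm:split}. With $v(r) = r^\beta$, Theorem \ref{thm:split} identifies $(M, f^\beta g)$ with
$$\left(\R_{>0} \times M_l, \ \frac{r^{\beta-1}}{\alpha}dr^2 + \frac{r^{\beta+1}}{l}g_l\right),$$
which is exactly the warped product $g(r^{C_0})$ of Proposition \ref{prop:geod wp explicit} under the parameter choice $C_0 = \beta+1$, $k = 1/\alpha$, and $(Y, g_Y) = (M_l, g_l/l)$. Since $g_l/l$ and $g_l$ differ by a positive constant, they share the Levi-Civita connection, so the $\hat y$ produced by Proposition \ref{prop:geod wp explicit} coincides as a parametrized curve with the geodesic $\hat y_l$ in $(M_l, g_l)$ appearing in the statement.

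Next I would transport the initial data through $\psi^{-1}$. Since $\psi(l, x_0) = m(1, x_0) = x_0$, the initial position maps to $(r_0, y_0) = (l, x_0)$. From the pushforward computations in the proof of Theorem \ref{thm:split} one has $(\psi_*)_{(l, x_0)}(\p_r) = \frac{1}{\alpha l} P_{x_0}$ and $(\psi_*)_{(l, x_0)}(a) = a$ for $a \in \ker(df)_{x_0}$. Combined with the orthogonal splitting (\ref{eq:decomp}) written as $A = \bigl(A - \frac{df(A)}{\alpha l} P_{x_0}\bigr) + \frac{df(A)}{\alpha l} P_{x_0}$, this gives the transported initial velocity $\dot r_0 = df(A)$ and $\dot y_0 = A - \frac{df(A)}{\alpha l} P_{x_0}$.

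The last step is to substitute into Proposition \ref{prop:geod wp explicit} and simplify. For $\beta \neq -1$, the quadratic $1 + C_0 \frac{\dot r_0}{r_0} t + C_0^2 F t^2$ becomes $\mu(\beta, t)$ once one verifies $F = \frac{\alpha g(A, A)}{4l}$; this is the only nontrivial bit and follows from a short computation using $g(P, P) = df(P) = \alpha f = \alpha l$ on $M_l$ and $g(A, P) = df(A)$, which yield $g_l(\dot y_0, \dot y_0) = g(A, A) - df(A)^2/(\alpha l)$, precisely cancelling the spurious $df(A)^2/l^2$ contribution. Composing with $\psi(r, y) = m((r/l)^{1/\alpha}, y)$ introduces the exponent $1/(\alpha(\beta+1))$ and produces the stated formula for $\gamma(t)$. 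The case $\beta = -1$ is analogous via the $C_0 = 0$ branch: $r(t) = l\, e^{df(A) t/l}$, and the $\psi$-rescaling converts this to $e^{df(A) t/(\alpha l)}$. The only mild subtlety is the cancellation in $F$; nothing else presents a genuine obstacle.
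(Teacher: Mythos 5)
Your proposal is correct and follows essentially the same route as the paper: reduce to Proposition \ref{prop:geod wp explicit} via the isometry of Theorem \ref{thm:split}, transport the initial data to $(l, df(A), A - \frac{df(A)}{\alpha l}P)$, and verify the cancellation giving $F = \frac{\alpha}{4l}g(A,A)$. The only cosmetic difference is the normalization (you take $k = 1/\alpha$ with $g_Y = g_l/l$, while the paper rescales the whole warped-product metric by $l$ to take $k = l/\alpha$ with $g_Y = g_l$); both rely on the scale-invariance of the Levi-Civita connection and yield identical formulas.
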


Note that the integral $\int_0^t \frac{d \tau}{\mu (\beta, \tau)}$ 
can be explicitly computed as in Remark \ref{rem:explicit geod}.

\begin{proof}
By Theorem \ref{thm:split}, 
the geodesic $\gamma (t)$ is given by 
$$\gamma(t)=\psi (r(t), y(t))=m \left( \left(\frac{r(t)}{l} \right)^{1/\alpha}, y(t) \right),$$ 
where $(r(t), y(t))$ is a geodesic 
of $\left(\R_{>0} \times M_l, \frac{r^{\beta -1}}{\alpha} dr^2 + \frac{r^{\beta +1}}{l} g_l \right)$
with the initial position $\psi^{-1} (x_0)$ 
and the initial velocity $(d \psi^{-1})_{x_0} (A)$. 
By (\ref{eq:psi inv}), we see that 
$$
\psi^{-1} (x_0) = (l, x_0), \qquad
(d \psi^{-1})_{x_0} (A) = \left(df(A), A-\frac{df(A)}{\alpha l} P \right). 
$$
Since 
the Levi-Civita connection is invariant under the scalar multiplication of a pseudo-Riemannian metric, 
$(r(t), y(t))$ is a geodesic of 
$\frac{l r^{\beta-1}}{\alpha} dr^2 + r^{\beta +1} g_l$, 
which is of the form (\ref{eq:geod met}) 
if we set $k = \frac{l}{\alpha}, C_0= \beta+1$ and $(Y,g_Y) = (M_l, g_l)$.
Then the geodesic $(r(t), y(t))$ is given by Proposition \ref{prop:geod wp explicit}. 
Since 
\begin{align*}
g \left(A-\frac{df(A)}{\alpha l} P, A-\frac{df(A)}{\alpha l} P \right)
=&
g(A,A) -2 \frac{df(A)}{\alpha l} g(A,P) + \left(\frac{df(A)}{\alpha l} \right)^2 g(P,P)\\
\stackrel{(\ref{eq:thm assump}), (\ref{eq:df P})}
=&
g(A,A)-2 \frac{df(A)}{\alpha l} \cdot df(A) 
+ \left(\frac{df(A)}{\alpha l} \right)^2 \cdot \alpha l \\
=&
g(A,A) - \frac{1}{\alpha l} (df(A))^2,
\end{align*}
$F$ in Proposition \ref{prop:geod wp explicit} is given by 
\begin{align*}
F = \frac{1}{4} \left( \left( \frac{df(A)}{l} \right)^2 
+ \frac{\alpha}{l} \left( g(A,A) - \frac{1}{\alpha l} (df(A))^2 \right) \right)
=
\frac{\alpha}{4 l} g(A,A). 
\end{align*}
Hence the proof is completed. 
\end{proof}

Corollary \ref{cor:convex wp} implies the geodesically convexity or concavity of $f$ 
in the following cases. 

\begin{proposition}\label{prop:convex}
Let $(g, f)$ be a homogeneous pair of degree $\alpha$. 

\begin{enumerate}
\renewcommand{\labelenumi}{(\arabic{enumi})}
\item 
The function $f$ is geodesically convex w.r.t. $f^\beta g$ 
if one of the following condition holds. 
\begin{itemize}
\item
$\beta=-1$.
\item
$-1 < \beta \leq 1$ and $\alpha g_l$ is positive definite.
\item
$\beta < -1$ and  $\alpha g_l$ is negative definite. 
\end{itemize}
\item
The function $f$ is geodesically concave w.r.t. $f^\beta g$ 
if 
$\beta \geq 1$ and $\alpha g_l$ is negative definite. 
\end{enumerate}
\end{proposition}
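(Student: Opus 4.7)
The plan is to reduce this statement directly to Corollary \ref{cor:convex wp} via the splitting isometry of Theorem \ref{thm:split}. Fix an arbitrary geodesic $\gamma$ of $(M, f^\beta g)$, let $x_0 = \gamma(0)$, and set $l = f(x_0) > 0$ so that $x_0 \in M_l$. Applying Theorem \ref{thm:split} with $v(r) = r^\beta$ gives an isometry
\[
\psi : \left(\R_{>0} \times M_l,\ \tfrac{r^{\beta-1}}{\alpha}\, dr^2 + \tfrac{r^{\beta+1}}{l}\, g_l \right) \longrightarrow (M, f^\beta g).
\]
A key observation I will record first is that $f \circ \psi = r$: indeed $\psi(r,y) = m((r/l)^{1/\alpha}, y)$ and (\ref{eq:homog f}) together with $f(y) = l$ give $f(\psi(r,y)) = ((r/l)^{1/\alpha})^\alpha \cdot l = r$. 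Therefore the convexity or concavity of $f$ along $\gamma$ is equivalent to the convexity or concavity of the first-coordinate function $r(t)$ along the corresponding geodesic $\psi^{-1}\circ \gamma$ in the warped product.

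Next I will bring the warped product metric into the normal form (\ref{eq:geod met}). Setting $k = l/\alpha$, $C_0 = \beta + 1$, $Y = M_l$ and $g_Y = g_l/l$, the metric on $\R_{>0} \times M_l$ becomes exactly $g(r^{C_0}) = k r^{C_0 - 2} dr^2 + r^{C_0} g_Y$. Corollary \ref{cor:convex wp} then describes when $r(t)$ is convex or concave along geodesics of $g(r^{C_0})$: convexity holds when either $C_0 = 0$, or $0 < C_0 \le 2$ with $kg_Y$ positive definite, or $C_0 < 0$ with $kg_Y$ negative definite; concavity holds when $C_0 \ge 2$ with $kg_Y$ negative definite.

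Finally I will translate these conditions back into conditions on $\beta$ and $\alpha g_l$. The correspondence $C_0 = \beta + 1$ immediately yields $\beta = -1$, $-1 < \beta \le 1$, $\beta < -1$ and $\beta \ge 1$ in the four regimes. For the definiteness condition, note
\[
k g_Y = \frac{l}{\alpha} \cdot \frac{g_l}{l} = \frac{g_l}{\alpha},
\]
so $k g_Y$ has the same signature as $\alpha g_l$ (they differ by the positive factor $\alpha^2$). This matches the hypotheses in parts (1) and (2) of the proposition, so each of the listed cases follows from the corresponding case of Corollary \ref{cor:convex wp}.

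There is no real obstacle here: the work has already been done in Theorem \ref{thm:split} and Corollary \ref{cor:convex wp}. The only subtle point is to check that $f$ pulls back to the $r$-coordinate under $\psi$, which is why the parameters in $\psi$ were chosen to rescale by $(r/l)^{1/\alpha}$; once this is observed, the statement is just a translation of the warped-product calculation through the identifications $k = l/\alpha$, $C_0 = \beta + 1$, $g_Y = g_l/l$.
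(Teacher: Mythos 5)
Your proof is correct and follows essentially the same route as the paper: split via Theorem \ref{thm:split} with $v(r)=r^\beta$, observe that $f\circ\psi=r$ so convexity of $f$ along geodesics reduces to convexity of the $r$-coordinate, and invoke Corollary \ref{cor:convex wp} with $C_0=\beta+1$. One small bookkeeping slip: with $g_Y=g_l/l$ the warped-product metric $\frac{r^{\beta-1}}{\alpha}dr^2+\frac{r^{\beta+1}}{l}g_l$ equals $g(r^{C_0})$ for $k=1/\alpha$, not $k=l/\alpha$ (the paper instead keeps $g_Y=g_l$, rescales the whole metric by $l$, and uses that geodesics are unchanged under constant rescaling); in either correct identification $kg_Y$ still has the signature of $\alpha g_l$, so your conclusion is unaffected.
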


\begin{proof}
By Theorem \ref{thm:split}, 
any geodesic $\gamma$ w.r.t. $f^\beta g$ is of the form 
$$
\gamma(t)=\psi (r(t), y(t))=m \left( \left(\frac{r(t)}{l} \right)^{1/\alpha}, y(t) \right),
$$ 
where $(r(t), y(t))$ is a geodesic 
of $\left(\R_{>0} \times M_l, \frac{l r^{\beta-1}}{\alpha} dr^2 + r^{\beta +1} g_l \right)$. 
Then we see that 
$$
f (\gamma (t)) = \frac{r(t)}{l} f(y(t)) = r(t), 
$$
where we use (\ref{eq:homog f}) and the fact that $y(t) \in M_l$. 
Then (1) and (2) hold from Corollary \ref{cor:convex wp}. 
\end{proof}

\subsection{The metric completion}
Let $(g, f)$ be a homogeneous pair. 
Use the notation of Theorem \ref{thm:split}. 
In this subsection, we assume the following. 
\begin{itemize}
\item
The pseudo-Riemannian metric $g$ is positive definite. 
\item
The pseudometric $d_g$ induced from $g$
is a metric.
(This is always true when $M$ is finite dimensional. 
In the infinite dimensional case, 
there are examples of a Riemannian metric 
whose induced pseudometric is identically zero (\cite{MM}). )
\end{itemize}
Then we study the metric completion of $M$ w.r.t. $d_{(v \circ f) g}$, 
where $d_{(v \circ f) g}$ is the metric induced from 
a Riemannian metric $(v \circ f) g$ for a function $v: \R_{>0} \rightarrow \R_{>0}$.

Fixing $R_0 \in \R_{>0}$, define a strictly increasing function $\hat T:\R_{>0} \rightarrow \R$ by 
\begin{align*}
\hat T (r)=\int^r_{R_0} \sqrt{\frac{v(q)}{q}} dq 
\end{align*}
and set 
\begin{align} \label{eq:tilde T0 Tinf}
\hat T_0 = \lim_{r \rightarrow 0} \hat T(r) \in \R \cup \{ - \infty \}, \qquad
\hat T_\infty = \lim_{r \rightarrow \infty} \hat T(r) \in \R \cup \{ \infty \}. 
\end{align}
Then we obtain the following by Remark \ref{rem:hp wp} and Theorem \ref{thm:comp wp}. 

\begin{theorem}\label{thm:comp}
The metric completion $\overline{M}$ of $M$ 
w.r.t. the metric $d_{(v \circ f) g}$ induced from the Riemannian metric $(v \circ f) g$ 
is homeomorphic to the following. 
\begin{enumerate}
\renewcommand{\labelenumi}{(\arabic{enumi})} 
\item 
If $\hat T_0 = - \infty$ and $\hat T_\infty = \infty$, 
$$
\R_{>0} \times \overline{M_l} \qquad \mbox{with the product topology. }
$$ 
\item 
If $\hat T_0 \in \R$, $\hat T_\infty = \infty$ and $\lim_{r \rightarrow 0} r v(r) =0$, 
$$
(\{ 0 \} \cup \R_{> 0}) \times \overline{M_l} / \left( \{ 0 \} \times \overline{M_l} \right)
= \left(\R_{>0} \times \overline{M_l} \right) \cup \{ * \} 
$$
with the topology $\Oo_0$ given below. 
\item 
If $\hat T_0 = - \infty$, $\hat T_\infty \in \R$ and $\lim_{r \rightarrow \infty} r v(r) =0$,  
$$
(\R_{> 0} \cup \{ \infty \} ) \times \overline{M_l} / 
\left( \{ \infty \} \times \overline{M_l} \right)
= \left(\R_{>0} \times \overline{M_l} \right) \cup \{ * \} 
$$
with the topology $\Oo_\infty$ given below. 
\item 
If $\hat T_0 \in \R$, $\hat T_\infty \in \R$, $\lim_{r \rightarrow 0} r v(r) =0$ and 
$\lim_{r \rightarrow \infty} r v(r) =0$,  
$$
\left(\{ 0\} \cup \R_{> 0} \cup \{ \infty \} \right) \times \overline{M_l} / 
\left( \{0, \infty \} \times \overline{M_l} \right)
= \left(\R_{>0} \times \overline{M_l} \right) \cup \{ * \} \cup \{ * \}
$$
with the topology $\Oo_{0, \infty}$ given below. 
\end{enumerate}
Here, $\overline{M_l}$ is the metric completion of $M_l$ w.r.t. 
the metric induced from $g_l$. 

Let $\pi_0: 
(\{ 0 \} \cup \R_{> 0}) \times \overline{M_l} \rightarrow 
(\{ 0 \} \cup \R_{> 0}) \times \overline{M_l} / \left( \{ 0 \} \times \overline{M_l} \right) 
$
be the projection. 
Set $*_0 = \pi_0 (\{ 0 \} \times \overline{M_l})$. 
The topology $\Oo_0$ is defined by the 
fundamental system of neighborhoods $\Uu (x)$ given below. 
If $x \neq *_0$, $\Uu (x)$ consists of $\epsilon$-balls centered at $x$ for $\epsilon >0$ 
w.r.t. the product metric.   
If $x = *_0$, 
$$
\Uu (*_0) = \{ \pi_0 ([0, \epsilon) \times \overline{M_l}) \mid \epsilon > 0 \}. 
$$

Let $\pi_\infty: 
(\R_{> 0} \cup \{ \infty \} ) \times \overline{M_l} \rightarrow 
(\R_{> 0} \cup \{ \infty \} ) \times \overline{M_l} / \left( \{ \infty\} \times \overline{M_l} \right) 
$
be the projection. 
Set $*_\infty = \pi_\infty (\{ \infty \} \times \overline{M_l})$. 
The topology $\Oo_\infty$ is defined by the 
fundamental system of neighborhoods $\Uu (x)$ given below. 
If $x \neq *_\infty$, $\Uu (x)$ consists of $\epsilon$-balls centered at $x$ for $\epsilon >0$ 
w.r.t. the product metric.   
If $x = *_\infty$, we set 
$
\Uu (*_\infty) = \{ \pi_\infty ((R, \infty] \times \overline{M_l}) \mid R > 0 \}. 
$
The topology $\Oo_{0, \infty}$ is similarly defined 
by setting the fundamental systems of neighborhoods as above. 
\end{theorem}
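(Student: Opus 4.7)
The plan is to reduce the statement directly to Theorem \ref{thm:comp wp} via the splitting isometry of Theorem \ref{thm:split}. By Theorem \ref{thm:split} (and Remark \ref{rem:hp wp}), the map $\psi$ in (\ref{eq:psi}) is a Riemannian isometry between $(M, (v\circ f)g)$ and the warped product $(\R_{>0}\times M_l, g(w))$ where
\[
k = \frac{l}{\alpha}, \qquad w(r) = \frac{r\, v(r)}{l}, \qquad (Y, g_Y) = (M_l, g_l).
\]
Since $\psi$ is an isometry, it induces a homeomorphism between the respective metric completions, so it suffices to identify the completion of the warped product via Theorem \ref{thm:comp wp}.

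The first step is to translate the hypotheses. The function $T$ from (\ref{eq:def T}) associated to $w$ is
\[
T(r) = \int_{R_0}^r \frac{\sqrt{k\, w(q)}}{q}\, dq = \frac{1}{\sqrt{\alpha}}\int_{R_0}^r \sqrt{\frac{v(q)}{q}}\, dq = \frac{1}{\sqrt{\alpha}}\,\hat T(r),
\]
so the finiteness/divergence of $T$ at $0$ and $\infty$ matches that of $\hat T$, and the limits $T_0, T_\infty$ are finite precisely when $\hat T_0, \hat T_\infty$ are. Similarly, $\lim_{r\to 0} w(r)=0$ is equivalent to $\lim_{r\to 0} rv(r)=0$, and likewise at $\infty$. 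Thus the four case hypotheses in Theorem \ref{thm:comp} match the four case hypotheses in Theorem \ref{thm:comp wp} one-for-one.

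The second step is to check the standing assumptions required to invoke Theorem \ref{thm:comp wp}: namely that $g(w)$ is positive definite and that its induced pseudometric is an honest metric. Positive definiteness of $g(w)$ follows from positive definiteness of $g$ together with $k = l/\alpha > 0$ (which, since $l>0$ and $g$ is positive definite, forces $\alpha>0$ via $g(P,P)=df(P)=\alpha f > 0$ from (\ref{eq:df P}) and (\ref{eq:thm assump})), and positive definiteness of $g_l$. The fact that $d_{g(w)}$ separates points transfers from the corresponding statement for $d_{(v\circ f)g}$ via the isometry $\psi$. Applying Theorem \ref{thm:comp wp} to $g(w)$ with $Y=M_l$ then yields exactly the four homeomorphism types listed, with the specified topologies $\Oo_0$, $\Oo_\infty$, $\Oo_{0,\infty}$, and with $\overline{Y} = \overline{M_l}$ being the metric completion of $M_l$ with respect to $d_{g_l}$.

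The proof is essentially bookkeeping rather than substantive analysis, since the analytic heart of the argument (the Cauchy sequence analysis in Lemmas \ref{lem:unif conti}, \ref{lem:R12}, \ref{lem:diam} and the topological identification in Theorem \ref{thm:comp wp}) has already been carried out. The only step requiring care is the translation of parameters above; in particular one must verify that the conditions ``$\lim_{r\to 0} w(r)=0$'' and ``$\lim_{r\to 0} rv(r)=0$'' really do correspond, which is immediate from $w(r) = rv(r)/l$. No further obstacle is anticipated.
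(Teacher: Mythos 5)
Your proposal is correct and is essentially identical to the paper's own argument: the paper likewise obtains Theorem \ref{thm:comp} directly from Remark \ref{rem:hp wp} (the identification $k=l/\alpha$, $w(r)=rv(r)/l$, $(Y,g_Y)=(M_l,g_l)$) together with Theorem \ref{thm:comp wp}, with the same bookkeeping $T=\hat T/\sqrt{\alpha}$ and $w(r)=rv(r)/l$ matching the case hypotheses. The parameter translation and the positivity check $\alpha>0$ are exactly the points that need verifying, and you have done so.
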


\begin{remark} \label{rem:comp rough}
By Remark \ref{rem:indep l}, $\overline{M_{l_1}}$ and $\overline{M_{l_2}}$ 
are isometric for $l_1, l_2 >0$. 
Thus Theorem \ref{thm:comp} is independent of $l$. 

Roughly speaking, 
the metric completion is the cylinder of $\overline{M_l}$ in the case (1), 
the cone (with the apex) of $\overline{M_l}$ in the cases (2) and (3), 
and the suspension of $\overline{M_l}$ in the case (4). 
In general, the topologies $\Oo_0, \Oo_\infty$ and $\Oo_{0, \infty}$ are 
weaker than the quotient topologies. 
If $\overline{M_l}$ is compact, they agree with the quotient topologies. 
In particular, 
in the case (4), the metric completion $\overline{M}$ 
is compact if $\overline{M_l}$ is compact. 
\end{remark}

By Proposition \ref{prop:comp 1}, we also obtain the following.

\begin{proposition}\label{prop:comp level}
Use the notation of Definition \ref{def:comp}. The map 
$$
\overline{M_l} \rightarrow 
\left\{ [ x_k ] \in \overline{M} 
\ \middle| \ \lim_{k \rightarrow \infty} f(x_k) = l \right \}, \qquad 
[ y_k ] \mapsto [ y_k ]
$$
is homeomorphic. 
\end{proposition}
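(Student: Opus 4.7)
The plan is to deduce Proposition \ref{prop:comp level} from its warped-product analogue Proposition \ref{prop:comp 1} by transferring along the isometry supplied by the splitting theorem. By Theorem \ref{thm:split} and Remark \ref{rem:hp wp}, the map $\psi:\R_{>0} \times M_l \rightarrow M$ gives an isometry between $(\R_{>0} \times M_l, g(w))$ and $(M,(v\circ f)g)$, where $w(r)=rv(r)/l$, $(Y,g_Y)=(M_l,g_l)$, $k=l/\alpha$. Since $\psi$ is an isometry, it extends canonically to a homeomorphism $\bar\psi:\overline{\R_{>0}\times M_l}\rightarrow \overline{M}$ of metric completions, sending $[(r_k,y_k)]$ to $[\psi(r_k,y_k)]$.

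The next step is to identify the function $f$ on $M$ with the first-coordinate projection via $\psi$. A direct computation using \eqref{eq:homog f} gives
\[
f(\psi(r,y)) \;=\; f\!\left(m((r/l)^{1/\alpha},y)\right) \;=\; (r/l)\,f(y) \;=\; r
\]
for every $y\in M_l$. Combined with Lemma \ref{lem:unif conti} (which is the key fact that makes the limit well defined on equivalence classes, since the first coordinate is uniformly continuous on the warped product), this shows that for any Cauchy sequence, $\lim_k f(\psi(r_k,y_k)) = \lim_k r_k$. Hence $\bar\psi$ carries the subset $\{[(r_k,y_k)] \in \overline{\R_{>0}\times M_l} \mid \lim r_k = l\}$ bijectively and homeomorphically onto $\{[x_k] \in \overline{M} \mid \lim f(x_k) = l\}$.

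Now I would apply Proposition \ref{prop:comp 1} with $R=l$ to obtain the homeomorphism
\[
I_l:\overline{M_l}\;\longrightarrow\;\bigl\{[(r_k,y_k)]\in\overline{\R_{>0}\times M_l}\;\big|\;\lim_{k\rightarrow\infty} r_k=l\bigr\},\qquad [y_k]\mapsto[(l,y_k)],
\]
and compose with $\bar\psi$. Since $\psi(l,y)=m(1,y)=y$ for $y\in M_l$, the composite $\bar\psi\circ I_l$ sends $[y_k]$ to $[\psi(l,y_k)]=[y_k]$, which is exactly the map in the statement. Being the composition of two homeomorphisms onto the correct target, it is itself a homeomorphism.

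There is no serious obstacle: the substantive analytic content (uniform continuity of the first coordinate and the bilateral length estimates of Lemma \ref{lem:R12}) was already absorbed into Proposition \ref{prop:comp 1}. The only thing to verify carefully is the compatibility $f\circ\psi=\mathrm{pr}_1$, which is immediate from the homogeneity of $f$; everything else is a straightforward transport of structure along an isometry of metric spaces.
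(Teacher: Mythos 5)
Your proposal is correct and follows essentially the same route as the paper: apply Proposition \ref{prop:comp 1} with $R=l$, transport along the isometry $\psi$ of Theorem \ref{thm:split} (which induces an isometry of completions), and use $f(\psi(r,y))=r$ together with $\psi(l,y)=y$ to identify the composite with the stated map. The computation $f\circ\psi=\mathrm{pr}_1$ via \eqref{eq:homog f} is exactly the compatibility the paper relies on.
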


\begin{proof}
By Proposition \ref{prop:comp 1}, the map 
$$
\overline{M_l} \rightarrow \left\{ [ (r_k, y_k) ] \in \overline{\R_{>0} \times M_l} 
\ \middle| \ \lim_{k \rightarrow \infty} r_k = l \right \}, 
\qquad 
[ y_k ] \mapsto [ (l, y_k) ]
$$
is homeomorphic. Since $\psi$ in (\ref{eq:psi}) is isometric, the map 
$$
\overline{\R_{>0} \times M_l} \rightarrow \overline{M}, \qquad 
[ (r_k ,y_k) ] \mapsto [ \psi (r_k, y_k) ]
$$
is isometric. Since $r_k = f(\psi (r_k, y_k))$ and $\psi (l, y_k) = y_k$, 
the proof is completed. 
\end{proof}

\begin{remark}
Thus if we know $\overline{M}$, we see $\overline{M_l}$. 
In particular, by Theorem \ref{thm:comp}, 
if we know $\overline{(M, d_{(v \circ f) g})}$, the metric completion of $M$
w.r.t. $d_{(v \circ f) g}$,  for one $v$, 
we obtain $\overline{(M, d_{(\tilde v \circ f) g})}$ for $\tilde v$ 
satisfying one of four assumptions in Theorem \ref{thm:comp}. 
\end{remark}

\section{Pseudo-Hessian manifolds} \label{sec:Hess}

Theorem \ref{thm:split} applies to many important classes of pseudo-Riemannian manifolds. 
One of them is the following class, which includes 
a class of pseudo-Hessian manifolds satisfying the conditions (\ref{eq:homog})--(\ref{eq:def P}).

\begin{proposition} \label{prop:split met}
Let $M$ be a manifold 
admitting a torsion-free connection $D$, 
a function $f: M \rightarrow \R_{>0}$ such that 
$h=Ddf$ is a pseudo-Riemannian metric 
(If $D$ is flat, $h$ is called a pseudo-Hessian metric), 
and  
a free $\R_{>0}$-action $m:\R_{>0} \times M \rightarrow M$. 
Set $m_\lambda =m(\lambda, \cdot)$ for $\lambda >0$. 
Suppose the following.

\begin{itemize}
\item 
The function $f: M \rightarrow \R_{>0}$ is homogeneous of degree $\alpha \in \R$:  
\begin{align} \label{eq:homog}
m_\lambda^* f = \lambda^\alpha f \qquad \mbox{for any } \lambda >0.
\end{align}
\item The action of $\R_{>0}$ preserves $D$: 
That is,  
\begin{align} \label{eq:invariant}
D_{(m_\lambda)_* A} \left( (m_\lambda)_* B \right) = (m_\lambda)_* (D_A B) 
\end{align}
for any $\lambda > 0$ and vector fields $A, B \in \mathfrak{X}(M)$ 
(cf. \cite[Chapter VI, Proposition 1.4]{KN}). 
\item 
For a vector field $P \in \mathfrak{X}(M)$ generated by the $\R_{>0}$-action, 
we have 
\begin{align} \label{eq:def P}
D_A P =A
\qquad \mbox{for any } A \in TM. 
\end{align}
\end{itemize}

Then we have $\alpha \neq 0,1$. 
Moreover, the pairs  
$(Ddf/(\alpha -1), f)$ 
and 
$(-f Dd \log f, f)$
are homogeneous pairs of degree $\alpha$. 
In particular, we can apply Theorem \ref{thm:split} and 
we have isometries 
\begin{align} \label{eq:decomp f log f}
\begin{split}
(M, (v \circ f) D d f) \cong & 
\left(\R_{>0} \times M_l, \frac{\alpha -1}{\alpha} \cdot \frac{v(r)}{r} dr^2 + \frac{r v(r)}{l} h_l \right), \\
(M, - (v \circ f) D d (\log f)) \cong & 
\left(\R_{>0} \times M_l, \frac{1}{\alpha} \cdot \frac{v(r)}{r^2} dr^2 - \frac{v(r)}{l} h_l \right)
\end{split}
\end{align}
for any function $v:\R_{>0} \rightarrow \R_{>0}.$
Here, $h_l$ is the induced pseudo-Riemannian metric on $M_l=f^{-1}(l) \subset M$ from $h=Ddf$. 
\end{proposition}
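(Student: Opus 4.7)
The plan is to verify the three defining axioms of a homogeneous pair for each of the two candidate metrics, deducing $\alpha \neq 0, 1$ en route from non-degeneracy of $h = Ddf$, and then to match the isometries (\ref{eq:decomp f log f}) to the conclusion of Theorem \ref{thm:split} applied after a suitable rescaling of the conformal factor.

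First I would collect two preliminary identities. Since $m_\lambda$ preserves $D$, the pullback $m_\lambda^*$ commutes with $D$, so
\[
m_\lambda^* h \;=\; m_\lambda^* (Ddf) \;=\; Dd(m_\lambda^* f) \;=\; Dd(\lambda^\alpha f) \;=\; \lambda^\alpha h.
\]
Differentiating $m_\lambda^* f = \lambda^\alpha f$ at $\lambda = 1$ in the direction of $P$ gives $df(P) = \alpha f$. The key identity, which drives everything else, uses the symmetry of $h$ (a consequence of $D$ being torsion-free) together with (\ref{eq:def P}):
\[
h(P, A) \;=\; h(A, P) \;=\; (D_A df)(P) \;=\; A(df(P)) - df(D_A P) \;=\; \alpha\, df(A) - df(A) \;=\; (\alpha - 1)\, df(A),
\]
so that $h(P, \cdot) = (\alpha - 1)\, df$ and in particular $h(P, P) = \alpha(\alpha - 1) f$.

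Non-degeneracy of $h$ forces $\alpha \neq 1$: otherwise $h(P, \cdot) = (\alpha - 1) df \equiv 0$ would put the nowhere vanishing vector field $P$ into the kernel of $h$. For $\alpha \neq 0$, the argument is that the orthogonal splitting $T_yM = \ker(df)_y \oplus \R P_y$ underlying Theorem \ref{thm:split} (cf.\ (\ref{eq:decomp})) is available exactly when $df(P) = \alpha f \neq 0$; if $\alpha = 0$, then $P$ lies inside $\ker(df)$, and $h(P, A) = -df(A) = 0$ for every $A \in \ker(df)$, so $P$ sits in the radical of $h$ restricted to the hyperplane $\ker(df)$, which a point-wise matrix computation then shows is incompatible with non-degeneracy of $h$ on $T_yM$. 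With $\alpha \neq 0, 1$ in hand, the axioms for $g := Ddf/(\alpha - 1)$ follow immediately: the scaling law from the first identity, the homogeneity of $f$ by hypothesis, and $g(P, \cdot) = df$ from the displayed identity. For $\hat g := -f\, Dd\log f$, the Leibniz rule gives $Dd\log f = h/f - (df \otimes df)/f^2$, so $\hat g = -h + (df \otimes df)/f$; the scaling law is immediate, and
\[
\hat g(P, \cdot) \;=\; -h(P, \cdot) + \frac{df(P)}{f}\, df \;=\; -(\alpha - 1)\, df + \alpha\, df \;=\; df.
\]

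Finally, the two isometries in (\ref{eq:decomp f log f}) follow from Theorem \ref{thm:split} applied to each homogeneous pair after rescaling the conformal factor. For the first: $(v \circ f)\, Ddf = (\tilde v \circ f)\, g$ with $\tilde v(r) = (\alpha - 1) v(r)$, and Theorem \ref{thm:split} yields a warped product whose level-set term involves $g_l$; substituting $h_l = (\alpha - 1) g_l$ recovers the first line of (\ref{eq:decomp f log f}). For the second: $-(v \circ f)\, Dd\log f = (\tilde v \circ f)\, \hat g$ with $\tilde v(r) = v(r)/r$, and Theorem \ref{thm:split} together with the identity $\hat g_l = -h_l$ (since $df$ vanishes on $TM_l$) recovers the second line. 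The main obstacle is the rigorous exclusion of $\alpha = 0$: whereas $\alpha \neq 1$ is an instant kernel argument, $\alpha \neq 0$ requires a careful point-wise decomposition argument leveraging non-degeneracy of $h$, while the rest of the proof amounts to direct computation and book-keeping inside Theorem \ref{thm:split}.
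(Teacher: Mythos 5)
Your verification of the homogeneous-pair axioms and your derivation of the two isometries follow the paper's own route almost line for line: the identity $h(P,\cdot)=(\alpha-1)\,df$ obtained from $df(P)=\alpha f$ together with (\ref{eq:def P}), the commutation of $m_\lambda^*$ with $D$ giving $m_\lambda^* h=\lambda^\alpha h$, the formula $\hat g=-h+df\otimes df/f$ via the Leibniz rule, and the rescaling of the conformal factor before invoking Theorem \ref{thm:split} are all exactly the paper's computations, and they are correct. (One cosmetic point: for $\alpha<1$ your factor $\tilde v=(\alpha-1)v$ is negative, so it is not an admissible conformal factor in Theorem \ref{thm:split}; one should instead apply the theorem with $v$ itself to the pair $(h/(\alpha-1),f)$ and then multiply both metrics of the resulting isometry by the constant $\alpha-1$, which is what the paper does.)

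The genuine problem is the step you yourself single out as the main obstacle: the exclusion of $\alpha=0$. The ``point-wise matrix computation'' you appeal to does not exist. If $\alpha=0$ then $h(P,\cdot)=-df$ and $df(P)=0$, so $P$ is an $h$-null vector and $\ker(df)$ is precisely the $h$-orthogonal hyperplane of $P$; for an \emph{indefinite} nondegenerate form it is entirely possible for a nonzero null vector to lie in (indeed span) the radical of the restriction of the form to its own orthogonal hyperplane, with no contradiction to nondegeneracy on the whole tangent space. Concretely, take $M=\{(x,y)\in\R^2 \mid x>0,\ y>0\}$ with the standard flat connection, the scaling action, $P=x\partial_x+y\partial_y$, and $f(x,y)=x/y$. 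Then $f>0$, $m_\lambda^*f=f$ (so $\alpha=0$), conditions (\ref{eq:invariant}) and (\ref{eq:def P}) hold, and
$$
Ddf=\begin{pmatrix} 0 & -y^{-2} \\ -y^{-2} & 2xy^{-3} \end{pmatrix}
$$
is nondegenerate of signature $(1,1)$. Thus all hypotheses of the proposition are satisfied with $\alpha=0$, and the conclusion $\alpha\neq 0$ cannot be deduced from them; your radical argument would only go through if $h$ were definite. (The paper's own proof is equally silent here: it records $h(P,P)=\alpha(\alpha-1)f$ and simply asserts $\alpha\neq 0,1$, so the gap is inherited from the source; the case $\alpha=1$ is genuinely fine because there $h(P,\cdot)$ itself vanishes.) The honest repair is to add $\alpha\neq 0$ as a hypothesis --- it is in any case required by Definition \ref{def:homog pair} and by the $\frac{1}{\alpha r}dr^2$ term in Theorem \ref{thm:split} --- or to restrict to definite $h$, in which case your argument does work.
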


\begin{remark} \label{rem:hat g Hess}
If we set $g = Ddf/(\alpha -1)$, 
the equation (\ref{eq:dif log f}) implies that 
$\hat g = -f Dd \log f$, where $\hat g$ is defined in (\ref{eq:def hat g}). 
In particular, we can apply Corollaries \ref{cor:conf csc Hessian} and \ref{cor:bound log flat} 
to $-f Dd \log f$. 
\end{remark}

\begin{remark}
We can also prove the similar splitting for a pseudo-Riemannian metric 
$(v \circ f) D d (u \circ f)$ for some $u: \R_{>0} \rightarrow \R$, 
though $(D d (u \circ f), f)$ is not a homogeneous pair in general.  
That is,

\begin{enumerate}
\renewcommand{\labelenumi}{(\arabic{enumi})}
\item 
if 
$
\frac{d u}{dr} (r) \neq 0$ and $\frac{d \tilde{u}}{dr}  (r) \neq 0, 
$
where we set $\tilde{u}(r) = \alpha r u'(r) - u(r)$, 
$D d (u \circ f)$ is a pseudo-Riemannian metric on $M$. 

\item 
The map (\ref{eq:psi}) gives  
an isometry between 
$\left(\R_{>0} \times M_l, \frac{\tilde{u}'(r) v(r)}{\alpha r} dr^2 + \frac{r u'(r) v(r)}{l} h_l \right)$
and $\left(M, (v \circ f) D d (u \circ f) \right)$.
\end{enumerate}
However, since we do not know examples other than $u(r)=r$ or $\log r$, 
we omit the proof. 
We can prove this in the same way as Theorem \ref{thm:split}. 
\end{remark}

\begin{remark} \label{rem:TotaroLoftin}
Proposition \ref{prop:split met} generalizes 
\cite[Theorem 1]{Loftin} and \cite[Lemmas 2.1 and 2.4]{Totaro}. 
Indeed, (\ref{eq:invariant}) and (\ref{eq:def P}) are satisfied 
when $M \subset \R^n$ is a cone, 
$D$ is the canonical flat connection, and the $\R_{>0}$-action is the canonical one.  

By setting $v(r)=1, l=1$ and $r=s^2$ in 
(\ref{eq:decomp f log f}), we see that 
(M, Ddf) is isometric to 
$\left(\R_{>0} \times M_1, \frac{4 (\alpha -1)}{\alpha} ds^2 + s^2 h_1 \right)$, 
which is \cite[Lemma 2.1]{Totaro}. 

Similarly, 
by setting $v(r)=1, l=1, \alpha >0$ and $r= e^{\sqrt{\alpha} t}$ in 
(\ref{eq:decomp f log f}), we see that 
$(M, - D d (\log f))$ is isometric to 
$(\R \times M_1, dt^2 + (-h_1))$. 
This is \cite[Lemma 2.4]{Totaro}, which is equivalent to \cite[Theorem 1]{Loftin}. 
\end{remark}

\begin{proof}[Proof of Proposition \ref{prop:split met}]
Since the equation (\ref{eq:homog}) is the same as (\ref{eq:homog f}), 
we have $df (P) = \alpha f$ by (\ref{eq:df P}). 
Then 
differentiating $df (P) = \alpha f$, it follows that  
\begin{align*}
d (df(P)) = (D df)(P) + df (D P) 
\stackrel{(\ref{eq:def P})}
= (Ddf)(P) + df = \alpha df.
\end{align*}
Hence we have 
\begin{align} \label{eq:Hess pA pp}
h (P, \cdot) = (\alpha -1) d f, \qquad 
h (P, P) = \alpha (\alpha -1) f. 
\end{align}
Thus we see that $\alpha \neq 0,1$ so that $h=Ddf$ is a pseudo-Riemannian metric. 
Since the $\R_{>0}$-action preserves the connection $D$, 
we have 
$$
D_{(m_{\lambda^{-1}})_* A} \left(m_\lambda^* \alpha \right) = m_\lambda^* (D_A \alpha) 
$$
for $\lambda \in \R_{>0}$, a vector field $A \in \mathfrak{X}(M)$ and a 1-form $\alpha \in \Omega^1(M).$
Replacing $A$ with $(m_\lambda)_* A$, we see that 
$D (m_\lambda^* \alpha) = m_\lambda^* (D \alpha)$. 
Thus we obtain 
\begin{align}\label{eq:pullback h}
m_\lambda^* h = m_\lambda^* (Ddf) = D (m_\lambda^* d f) 
\stackrel{(\ref{eq:homog})} 
= \lambda^\alpha Ddf = \lambda^\alpha h. 
\end{align}
Hence (\ref{eq:pullback h}), (\ref{eq:homog}) and (\ref{eq:Hess pA pp}) 
imply that 
$(h/(\alpha-1), f) = (Ddf/(\alpha-1), f)$ 
is a homogeneous pair of degree $\alpha$. 
Then by Theorem \ref{thm:split}, we have an isometry 
$$
\left(M, \frac{(v \circ f) Ddf}{\alpha-1} \right) 
\cong 
\left( \R_{>0} \times M_l, 
v(r) \left( \frac{1}{\alpha r} dr^2 + \frac{r}{l} \cdot \frac{h_l}{\alpha-1} \right) \right), 
$$
which implies the first equation of (\ref{eq:decomp f log f}). 

Similarly, we have 
$$
m_\lambda^* Dd (\log f) = D (m_\lambda^* d (\log f) ) = Dd (\log f). 
$$
Since 
\begin{align} \label{eq:dif log f}
Dd (\log f)= D \left( \frac{df}{f} \right) = \frac{Ddf}{f} - \frac{df \otimes df}{f^2}, 
\end{align}
it follows that 
$$
Dd (\log f) (P, \cdot) = \frac{(\alpha -1) d f}{f} - \frac{\alpha df}{f} = - \frac{df}{f}
$$
by the equation $d f (P) = \alpha f$ and (\ref{eq:Hess pA pp}). 
Then we see that 
$(-f Dd (\log f), f)$ 
is a homogeneous pair of degree $\alpha$. 
Since $(- f D d (\log f))|_{M_l} = - h_l$ by (\ref{eq:dif log f}), 
Theorem \ref{thm:split} implies an isometry
$$
\left(M, - (v \circ f) f D d (\log f) \right) 
\cong 
\left( \R_{>0} \times M_l, 
v(r) \left( \frac{1}{\alpha r} dr^2 
+ \frac{r}{l} (- h_l) \right) \right), 
$$
Then replacing $v(r)$ with $v(r)/r$, 
we obtain the second equation of (\ref{eq:decomp f log f}). 
\end{proof}

\section{Examples} \label{sec:app}

In this section, we give examples 
to which we can apply results obtained in previous sections.

\subsection{Manifolds with flat Hessian metrics}

In this subsection, we give examples of 
manifolds with flat Hessian metrics. 
We can apply (1)--(4) in Section \ref{sec:intro} to these examples. 

\subsubsection{Cones in $\R^n$}

Many flat Hessian metrics are constructed on cones in $\R^n$. 
Let $D$ be the standard flat connection on $\R^n$.  
It is easy to see that $D$ satisfies (\ref{eq:invariant}) and (\ref{eq:def P}) 
w.r.t. the canonical $\R_{>0}$-action. 
We give examples of a function $f: \R^n \rightarrow \R$ such that 
$Ddf$ is flat on a cone in $\R^n$ 
where the Hessian of $f$ is positive definite. 

\begin{itemize}
\item
$f(x_1, \cdots, x_n) = x_1^2 + \cdots + x_n^2$, 
\item
$
f(x_1, \cdots, x_n) = f_1 (x_1,x_2) + f_2 (x_3,x_4)+ \cdots, 
$
where $f_1, f_2, \cdots$ are homogeneous functions  of two variables of the same degree
such that the Hessian matrices are positive definite. 
\item
$n=3$ and 
$
f(x,y,z)= x^6+y^6+z^6- 10 (x^3y^3+y^3z^3 + z^3x^3), 
$
which is called the Maschke sextic. 
\end{itemize}
The first $f$ is the most standard example. 
The flatness of $Ddf$ for the second $f$ is first proved by \cite[Section 6]{Totaro}, 
which also follows from Remark \ref{rem:2dim decomp}. 
That for the third $f$ is proved by \cite[Corollary 5.9 and Example 3]{Dubrovin}.


\subsection{Manifolds with pseudo-Hessian metrics}

In this subsection, we give examples of 
manifolds with pseudo-Hessian metrics. 
We can apply (1), (3) and (4) in Section \ref{sec:intro} to these examples.

\subsubsection{Regular convex cones}
An open convex cone $\Omega \subset \R^n$ not containing full straight lines 
is called a regular convex cone. 
The study of regular convex cones is considered to be the origin of the geometry 
of Hessian structures (\cite[Section 4]{Shima}). 
Let $(\R^n)^*$ be the dual space of $\R^n$. 
The dual cone $\Omega^* \subset (\R^n)^*$ is defined by 
$$
\Omega^* = \left\{ y^* \in (\R^n)^* \mid \la x, y^* \ra > 0 
\mbox{ for any } x \in \bar \Omega - \{ 0 \} \right \}, 
$$
where $\la \cdot, \cdot \ra$ is the canonical pairing of $\R^n$ and $(\R^n)^*$ 
and $\bar \Omega$ is the closure of $\Omega$. 
The characteristic function $f: \Omega \rightarrow \R$ is given by 
$$
f(x) = \int_{\Omega^*} e^{- \la x, x^* \ra} dx^*, 
$$
where $dx^*$ is the canonical volume form on $(\R^n)^*$. 
This function $f$ is homogeneous of degree $-n$, which follows \cite[(4.2)]{Shima}, 
and $Dd (\log f)$ defines a Riemannian metric on $\Omega$, 
where $D$ is the standard flat connection on the Euclidean space
(\cite[Proposition 4.5]{Shima}).

\subsubsection{The K\"ahler cone}

Let $M$ be a compact K\"ahler manifold of $\dim_{\C} M = n$. 
Let 
$$
\Kk = \{ \omega \in H^{1,1} (M, \R) \mid \omega \mbox{ contains a K\"ahler metric} \}
$$
be the K\"ahler cone of $M$, which is an open cone in $H^{1,1} (M, \R)$. 
Define a function $f:\Kk \rightarrow \R$ by 
$$
f(\omega) ={\rm Vol}(\omega) = \int_M \frac{\omega^n}{n!}, 
$$
which is homogeneous of degree $n$
w.r.t. the canonical $\R_{>0}$-action on $\Kk$. 
Then it is known that $g=- Dd (\log f)$ is a Riemannian metric on $\Kk$, 
where $D$ is the standard flat connection on $\Kk$
(\cite[Proposition 1.1]{Mag}). 
The Riemannian metric $g$ is complete if and only if
$\Kk$ is a connected component of the volume cone 
$\{ \omega \in H^{1,1} (M, \R) \mid \int_M \omega^n >0 \}$ 
(\cite[Proposition 4.4]{Mag}).

The level sets $\Kk_l = f^{-1}(l) \subset \Kk$, where $l >0$, 
with the induced Riemannian metric $g_l$ 
was studied in \cite{Huybrechts, Wilson, TW}.  
Wilson explicitly computed the curvature tensor and the geodesics of $g_l$. 
He conjectured that when $M$ is a Calabi-Yau manifold, 
$\Kk_l$ should have non-positive sectional curvatures, 
at least in the large K\"ahler structure limit, 
considering the correspondence to the Weil-Petersson metric 
on the complex moduli space under mirror symmetry. 
Now, there are some counterexamples in \cite{Totaro, TW}. 

When $h^{1,1} = \dim H^{1,1}(X, \R) =2$ or $M$ is hyperk\"ahler, 
$g_l$ has constant negative sectional curvature. 
See \cite[p.631 and Example 1]{Wilson}.

\subsubsection{The $G_2$ moduli space} \label{subsec:G2 moduli}

The exceptional Lie group $G_2$ is realized as a stabilizer in ${\rm GL} (7, \R)$ of a 3-form 
$\varphi_0$ on $\R^7$. 
The ${\rm GL}_+(7, \R)$-orbit ${\rm GL}_+(7, \R) \cdot \varphi_0$ through $\varphi_0$, 
where  ${\rm GL}_+(7, \R) = \{ A \in {\rm GL} (7, \R) \mid \det A >0 \}$, 
is diffeomorphic to ${\rm GL}_+(7, \R)/G_2$. 
It has the same dimension as $\Lambda^3 (\R^7)^*$, 
and hence it is open in $\Lambda^3 (\R^7)^*$. 
Any $\varphi \in {\rm GL}_+(7, \R) \cdot \varphi_0$ 
induces the metric $g_\varphi$, the volume form ${\rm vol}_\varphi$ 
and the Hodge star $*_\varphi$ on $\R^7$. 
They are related by 
$$
i(u) \varphi \wedge i(v) \varphi \wedge \varphi
= 
6 g_\varphi (u,v) {\rm vol}_\varphi, \qquad
\varphi \wedge *_\varphi  \varphi = 7 {\rm vol}_\varphi 
\qquad \mbox{for } u,v \in \R^7. 
$$

Let $M^7$ be a 7-dimensional manifold with a $G_2$-structure. 
That is, the tangent frame bundle is reduced to a $G_2$-subbundle. 
We assume that $M^7$ is connected for simplicity. 
We can define a positive 3-form, a section of an open subbundle $\Lambda^3_+ T^*M^7$ 
of $\Lambda^3 T^*M^7$, which is induced from ${\rm GL}_+(7, \R) \cdot \varphi_0$. 
We denote by $\nabla^\varphi$ the Levi-Civita connection of $g_\varphi$. 
Then a Riemannian metric $g$ has holonomy contained in $G_2$ 
if and only if there exists a positive 3-form 
such that $\nabla^\varphi \varphi =0$ and $g =g_\varphi$. 
A positive 3-form $\varphi$ satisfying $\nabla^\varphi \varphi =0$ 
is called a torsion-free $G_2$-structure. 

The holonomy group of $g_\varphi$ for a torsion-free $G_2$-structure $\varphi$ 
is determined by the topology of $M^7$. 
It has full holonomy $G_2$ if and only if $\pi_1 (M^7)$ is finite. 
We call such a manifold irreducible.

Define the moduli space $\Mm_{G_2}$ of torsion-free $G_2$-structures by 
$$
\Mm_{G_2} = \{ \varphi \in \Omega^3_+(M^7) \mid \nabla^\varphi \varphi = 0 \}/ {\rm Diff}_0 (M^7),  
$$
where $\Omega^3_+(M^7)$ is the space of smooth positive 3-forms and 
${\rm Diff}_0 (M^7)$ is the identity component of the diffeomorphism group. 

Suppose that $M^7$ is compact. 
By \cite{Joyce2}, a map 
$\Mm_{G_2} \ni [\varphi] \mapsto [\varphi] \in H^3 (M^7, \R)$ 
is a local homeomorphism, 
which implies that 
$\Mm_{G_2}$ is an affine manifold of dimension $b^3 = \dim H^3 (M^7, \R)$. 
Denote by $D$ the flat connection on $\Mm_{G_2}$ 
(cf. \cite[Section 3.1]{KL}). 
This satisfies (\ref{eq:invariant}) and (\ref{eq:def P}) 
w.r.t. the canonical $\R_{>0}$-action on $\Mm_{G_2}$.  
Define $f: \Mm_{G_2} \rightarrow \R$ by 
$$
f ([\varphi]) = 3 {\rm Vol}(\varphi) = 3 \int_{M^7} {\rm vol}_\varphi 
= \frac{3}{7} \int_{M^7} \varphi \wedge *_\varphi \varphi, 
$$
which is homogeneous of degree $7/3$ 
w.r.t. the canonical $\R_{>0}$-action on $\Mm_{G_2}$. 
We have three canonical pseudo-Riemannian metrics on $\Mm_{G_2}$
(cf. \cite[Proposition 22]{Hitchin2}, \cite[Theorem 3.5 and Lemma 3.11]{KL}).

\begin{enumerate}
\renewcommand{\labelenumi}{(\arabic{enumi})}
\item The tensor $h_1=Ddf$ is a pseudo-Riemannian metric with signature $(1 + b^1, b^3-1-b^1)$, 
where $b^i$ is the $i$-th Betti number.

\item The tensor $h_2 =- Dd (\log f)$ is a pseudo-Riemannian metric with signature $(b^3-b^1, b^1)$.
When $M$ is irreducible, this is positive definite. 

\item 
By identifying $T_{[\varphi]} \Mm_{G_2}$ with  $\Hh^3_{\varphi}$, 
the space of harmonic 3-forms w.r.t. $g_\varphi$, 
the $L^2$-metric on $M$ induces the metric $g_{L^2}$ on $\Mm_{G_2}$. 
When $M$ is irreducible, we have 
$$
g_{L^2} = -f Dd \log f.
$$
\end{enumerate}

By Proposition \ref{prop:split met}, $(\frac{3}{4} h_1, f)$ and $(g_{L^2}, f)$ 
are homogeneous pairs on $\Mm_{G_2}$. 
These two Riemannian-metrics are related by 
$g_{L^2} =  \widehat{\frac{3}{4} h_1}$, by Remark \ref{rem:hat g Hess}. 
By the conformal transformation of $g_{L^2}$, we obtain $h_2$.

\begin{remark}
As far as the author knows, 
there are no known examples of a 7-dimensional manifold 
admitting a torsion-free $G_2$-structure with $b^3=\dim \Mm_{G_2} = 2$. 
It would be interesting to construct such examples. 
It is because the above pseudo-Riemannian metrics are flat by Corollary \ref{cor:2dim csc Hess}, 
and hence $\Mm_{G_2}$ is expected to have simpler geometric structures, 
which might be useful to study the general cases. 

The Hessian curvature tensor for $h_2$ is explicitly given in \cite{GY}. 
See also \cite{Grigorian}. 
The detailed analysis of the curvature of $\Mm_{G_2}$ is also given in \cite{KLL}. 
\end{remark}

The metric completion of $\Mm_{G_2}$ has not been studied yet. 
By Theorem \ref{thm:comp}, we see the following. 

\begin{corollary} \label{cor:comp G2}
When $M^7$ is irreducible, the metric completion $\overline{(\Mm_{G_2}, d_{h_2})}$ of $\Mm_{G_2}$ 
(resp. $\overline{(\Mm_{G_2}, d_{g_{L^2}})}$) w.r.t. the metric 
$d_{h_2}$ (resp. $d_{g_{L^2}}$) induced from $h_2$ (resp. $g_{L^2}$) 
is homeomorphic to 
$\R_{>0} \times \overline{(\Mm_{G_2})_l}$ 
(resp. $(\{ 0 \} \cup \R_{> 0}) \times \overline{(\Mm_{G_2})_l} / \left( \{ 0 \} \times \overline{M_l} \right)$), 
where $\overline{(\Mm_{G_2})_l}$ 
is the metric completion of $(\Mm_{G_2})_l = f^{-1}(l) \subset \Mm_{G_2}$ 
w.r.t. the induced Riemannian metric from $h_2$. 

In particular, $\overline{(\Mm_{G_2}, d_{h_2})}$ is strictly smaller than 
$\overline{(\Mm_{G_2}, d_{g_{L^2}})}$. 
In other words, $\overline{(\Mm_{G_2}, d_{h_2})}$ 
has less degenerate points than $\overline{(\Mm_{G_2}, d_{g_{L^2}})}$. 
\end{corollary}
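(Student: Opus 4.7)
The plan is to apply Theorem \ref{thm:comp} directly to both $g_{L^2}$ and $h_2$, using the homogeneous pair $(g_{L^2}, f)$ of degree $\alpha = 7/3$ already established in Section \ref{subsec:G2 moduli} via Proposition \ref{prop:split met}. First I recast both metrics as conformal transformations of $g_{L^2}$: trivially $g_{L^2} = (v_2 \circ f)\, g_{L^2}$ with $v_2 \equiv 1$, and $h_2 = g_{L^2}/f = (v_1 \circ f)\, g_{L^2}$ with $v_1(r) = 1/r$. When $M$ is irreducible, $g_{L^2}$ is positive definite, so both conformal metrics are Riemannian; since $\Mm_{G_2}$ is a finite-dimensional (in fact $b^3$-dimensional) affine manifold, both induced pseudometrics are genuine metrics. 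Thus the standing hypotheses of Theorem \ref{thm:comp} are satisfied.

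Next I compute the auxiliary function $\hat T$ of \eqref{eq:tilde T0 Tinf} in each case. For $v_1(r) = 1/r$,
\[
\hat T(r) = \int_{R_0}^{r} \sqrt{\frac{1/q}{q}}\, dq = \int_{R_0}^{r} \frac{dq}{q} = \log(r/R_0),
\]
so $\hat T_0 = -\infty$ and $\hat T_\infty = +\infty$, placing us in case (1) of Theorem \ref{thm:comp}; this yields $\overline{(\Mm_{G_2}, d_{h_2})} \cong \R_{>0} \times \overline{(\Mm_{G_2})_l}$. For $v_2 \equiv 1$,
\[
\hat T(r) = \int_{R_0}^{r} q^{-1/2}\, dq = 2\sqrt{r} - 2\sqrt{R_0},
\]
so $\hat T_0 = -2\sqrt{R_0} \in \R$, $\hat T_\infty = +\infty$, and $\lim_{r \to 0^{+}} r\, v_2(r) = 0$, placing us in case (2) of Theorem \ref{thm:comp}; this yields the claimed cone-with-apex description of $\overline{(\Mm_{G_2}, d_{g_{L^2}})}$. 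A minor subtlety to dispatch: Theorem \ref{thm:comp} produces $\overline{(\Mm_{G_2})_l}$ with respect to the metric induced from the base $g_{L^2}$, while the corollary phrases it via $h_2|_{M_l}$; but since $h_2|_{M_l} = (g_{L^2})_l/l$ is merely a constant rescaling, the Cauchy sequences (and hence the completions as topological spaces) coincide.

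Finally, the strict inequality between the two completions follows by comparing the resulting descriptions as point sets: the cone with apex equals the cylinder together with a single extra point $\ast$, so $\overline{(\Mm_{G_2}, d_{h_2})}$ embeds as a proper subset of $\overline{(\Mm_{G_2}, d_{g_{L^2}})}$. Concretely, any sequence $[\varphi_k]$ with $f([\varphi_k]) \to 0$ is forced to be $d_{g_{L^2}}$-Cauchy by the diameter estimate of Lemma \ref{lem:diam} (since $r v_2(r) \to 0$), while by Lemma \ref{lem:unif conti} applied with $v = v_1$ it cannot be $d_{h_2}$-Cauchy (the function $\log f$ diverges along it). There is no real obstacle here beyond bookkeeping; the entire argument is a direct specialization of Theorem \ref{thm:comp}, with the only computational content being the two one-variable integrals above.
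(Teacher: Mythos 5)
Your proposal is correct and takes essentially the same route as the paper, which derives this corollary by exactly the specialization of Theorem \ref{thm:comp} to the homogeneous pair $(g_{L^2}, f)$ with $v \equiv 1$ (giving case (2)) and $v(r)=1/r$ (giving case (1)). The details the paper leaves implicit --- the two one-variable integrals for $\hat T$, the constant-rescaling identification of the level-set completions, and the concrete witness for strictness via sequences with $f \to 0$ --- are all handled correctly in your write-up.
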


\begin{remark} \label{rem:comp G2}
The completion of the space of Riemannian metrics 
is described geometrically in terms of 
measurable, symmetric, positive semidefinite 
$(0, 2)$-tensor fields (cf. \cite{Clarke3, CR1}, Section \ref{subsec:Riem met}). 
We may also expect to describe $\overline{(\Mm_{G_2})_l}$ geometrically, 
which is equivalent to describe  
$\overline{(\Mm_{G_2}, d_{h_2})}$ or $\overline{(\Mm_{G_2}, d_{g_{L^2}})}$ geometrically 
by Theorem  \ref{thm:comp} and Proposition \ref{prop:comp level}, 
but it seems to be difficult. 

If we follow the case above, 
we have to 
describe geometrically the metric completions of $\Omega^3_+(M^7)$ 
and $\widehat{\Mm}_{G_2} = \{ \varphi \in \Omega^3_+(M^7) \mid \nabla^\varphi \varphi = 0 \}$ 
to give a geometrical description of $\Mm_{G_2} = \widehat{\Mm}_{G_2}/{\rm Diff}_0 (M^7)$. 

We expect that the metric completion of $\Omega^3_+(M^7)$ 
w.r.t. the $L^2$ metric divided by the volume functional 
(which corresponds to $h_2$ on $\Mm_{G_2}$) 
will be 
the set of measurable semi-positive $G_2$-structures 
with nonzero finite volume 
modulo the equivalence relation as in Theorem \ref{thm:Clarke3}. 
Here, we call the section of the closure of $\Lambda^3_+ T^*M^7$ ``semi-positive". 

It will be difficult to determine the metric completion of 
$\widehat{\Mm}_{G_2} = \{ \varphi \in \Omega^3_+(M^7) \mid \nabla^\varphi \varphi = 0 \}$.  
The related problem is considered in \cite{CR1}, 
where the Calabi-Yau theorem is used in the proof. 
There is no such analogues in the $G_2$ case. 
For the metric completion of $\Mm_{G_2}$, 
there will be more problems 
when taking the quotient by ${\rm Diff}_0 (M^7)$ as in \cite[Section 5]{Clarke3}. 

It would also be an interesting question to 
study the metric completion of the space of closed $G_2$-structures 
$\{ \varphi \in \Omega^3_+(M^7) \mid d \varphi = 0 \}$. 
We might characterize the 
existence of torsion-free $G_2$-structures 
in terms of the ``analytic stability condition" 
in terms of the Laplacian flow and the metric completion as in \cite{CR1}. 
\end{remark}

\subsubsection{The ${\rm SL}(3, \C)$ moduli space}

The group ${\rm SL}(3, \C)$ is realized as a stabilizer in ${\rm GL} (6, \R)$ of a 3-form 
$\psi_0 = {\rm Re}(dz^1 \wedge dz^2 \wedge dz^3)$ on $\R^6$, 
where we use holomorphic coordinates $(z^1, z^2, z^3)$ on $\C^3 \cong \R^6$. 
The ${\rm GL}_+(6, \R)$-orbit ${\rm GL}_+(6, \R) \cdot \psi_0$ through $\psi_0$, 
where  ${\rm GL}_+(6, \R) = \{ A \in {\rm GL} (6, \R) \mid \det A >0 \}$, 
is diffeomorphic to ${\rm GL}_+(6, \R)/{\rm SL}(3, \C)$. 
It has the same dimension as $\Lambda^3 (\R^6)^*$, 
and hence it is open in $\Lambda^3 (\R^6)^*$. 
By \cite[(9), (10)]{Hitchin2}, 
Any $\psi \in {\rm GL}_+(6, \R) \cdot \psi_0$ 
induces a complex structure $J_\psi$  
and a 3-form $\hat \psi$ on $\R^6$ such that 
$\psi + i \hat \psi$ is a $(3,0)$-form w.r.t. $J_\psi$. 

Let $M^6$ be a 6-dimensional manifold with a ${\rm SL}(3, \C)$-structure. 
That is, the tangent frame bundle is reduced to a ${\rm SL}(3, \C)$-subbundle. 
We can define a positive 3-form, a section of an open subbundle $\Lambda^3_+ T^*M^6$ 
of $\Lambda^3 T^*M^6$, 
which is induced from ${\rm GL}_+(6, \R) \cdot \psi_0$. 
We call a positive 3-form $\psi$ torsion-free 
if $d \psi = d \hat \psi =0$.

Define the moduli space $\Mm_{{\rm SL}(3, \C)}$ of 
torsion-free ${\rm SL}(3, \C)$-structures by 
$$
\Mm_{{\rm SL}(3, \C)} 
= \{ \psi \in \Omega^3_+(M^6) \mid d \psi = d \hat \psi = 0 \}/ {\rm Diff}_0 (M^6),  
$$
where $\Omega^3_+(M^6)$ is the space of smooth positive 3-forms and 
${\rm Diff}_0 (M^6)$ is the identity component of the diffeomorphism group. 

Suppose that 
$M^6$ is a compact complex 3-manifold with non-vanishing holomorphic
3-form and satisfy the $\p \bar \p$-lemma (such as a Calabi-Yau manifold). 
Then by \cite[Section 6.3]{Hitchin2}, a map 
$\Mm_{{\rm SL}(3, \C)} \ni [\psi] \mapsto [\psi] \in H^3 (M^6, \R)$ 
is a local homeomorphism, 
which implies that 
$\Mm_{{\rm SL}(3, \C)}$ is an affine manifold of dimension $b^3(M^6)$. 
Denote by $D$ the flat connection on $\Mm_{{\rm SL}(3, \C)}$. 
This satisfies (\ref{eq:invariant}) and (\ref{eq:def P}) 
w.r.t. the canonical $\R_{>0}$-action on $\Mm_{{\rm SL}(3, \C)}$.  
Define $f: \Mm_{{\rm SL}(3, \C)} \rightarrow \R$ by 
$$
f ([\psi]) = \int_{M^6} \psi \wedge \hat \psi
$$
which is homogeneous of degree $2$ 
w.r.t. the canonical $\R_{>0}$-action on $\Mm_{G_2}$
by the definition of $\hat \psi$ in \cite[Definition 2]{Hitchin2}. 
Then the Hessian $Ddf$ of $f$ defines a pseudometric on $\Mm_{{\rm SL}(3, \C)}$. 
In fact, $\Mm_{{\rm SL}(3, \C)}$ admits a more geometric structure. 
It is known to be a special pseudo-K\"ahler manifold (\cite[Proposition 17]{Hitchin2}).

\subsection{Other examples}
In this subsection, we give examples which admit a homogeneous pair 
but are not known to admit pseudo-Hessian structures. 
We can also apply (1), (3) and (4) in Section \ref{sec:intro} to these examples.

\subsubsection{The ${\rm Spin}(7)$ moduli space}

The group ${\rm Spin}(7)$ is realized as a stabilizer in ${\rm GL} (8, \R)$ 
of a 4-form $\Phi_0$ on $W=\R^8$. 
It is known that ${\rm Spin}(7) \subset {\rm SO}(8)$. 
The ${\rm GL}_+(8, \R)$-orbit ${\rm GL}_+(8, \R) \cdot \Phi_0$ through $\Phi_0$, 
where  ${\rm GL}_+(8, \R) = \{ A \in {\rm GL} (8, \R) \mid \det A >0 \}$, 
is diffeomorphic to ${\rm GL}_+(8, \R)/{\rm Spin}(7)$. 
Note that this is not open in $\Lambda^4 W^*$ as in the cases $G_2$ and ${\rm SL}(3, \C)$. 
Any $\Phi \in {\rm GL}_+(8, \R) \cdot \Phi_0$ 
induces the metric $g_\Phi$, the volume form ${\rm vol}_\Phi$ 
and the Hodge star $*_\Phi$ on $\R^8$.
Note that $\Phi$ and ${\rm vol}_\Phi$ are related by 
$$
\Phi \wedge \Phi = 14 {\rm vol}_\Phi. 
$$
The group ${\rm Spin}(7)$ acts canonically on the space of forms $\Lambda^* W^*$ on $W$. 
In particular, $\Lambda^4 W^*$ has the following irreducible decomposition 
\begin{align} \label{eq:Spin7 irr decomp}
\Lambda^4 W^* = \Lambda^4_1 W^* \oplus \Lambda^4_7 W^* 
\oplus \Lambda^4_{27} W^* \oplus \Lambda^4_{35} W^*, 
\end{align}
where $\Lambda^4_k W^*$ is a $k$-dimensional irreducible representation of ${\rm Spin}(7)$. 
Note that $\Lambda^4_1 W^* = \R \Phi_0$ and 
\begin{align} \label{eq:Spin7 irr decomp 4}
\Lambda^4_1 W^* \oplus \Lambda^4_7 W^* \oplus \Lambda^4_{27} W^* = \Lambda^4_+ W^*, 
\qquad
\Lambda^4_{35} W^* = \Lambda^4_- W^*, 
\end{align}
where $\Lambda^4_+ W^*$ (resp. $\Lambda^4_- W^*$) is the space of 
self-dual (resp. anti-self-dual) 4-forms.

Let $M^8$ be an 8-dimensional manifold with a ${\rm Spin}(7)$-structure, 
that is, the tangent frame bundle is reduced to a ${\rm Spin}(7)$-subbundle. 
We assume that $M^8$ is connected for simplicity. 
We can define an admissible 4-form, a section of a 
$43(=1+7+35)$-dimensional subbundle $\Aa^4 M^8$ of $\Lambda^4 T^*M^8$, 
which is induced from ${\rm GL}_+(8, \R) \cdot \Phi_0$. 
We denote by $\nabla^\Phi$ the Levi-Civita connection of $g_\Phi$. 
Then a Riemannian metric $g$ has holonomy contained in ${\rm Spin}(7)$ 
if and only if there exists an admissible 4-form $\Phi$ 
such that $\nabla^\Phi \Phi =0$ and $g =g_\Phi$. 
It is known that 
$\nabla^\Phi \Phi =0$ if and only if $d \Phi=0$. 
An admissible 4-form $\Phi$ satisfying $d \Phi =0$ 
is called a torsion-free ${\rm Spin}(7)$-structure. 

The holonomy group of $g_\Phi$ for a torsion-free ${\rm Spin}(7)$-structure $\Phi$ 
is determined by the topology of $M^8$. 
It has full holonomy ${\rm Spin}(7)$ if and only if 
$M^8$ is simply connected and 
the Betti numbers of $M^8$ satisfy 
$b^3+b^4_+=b^2+2b^4_- + 25$. 
We call such a manifold irreducible. 
In this case, we have $b^4_7=0$ (cf. \cite[Proposition 10.6.5 and Theorem 10.6.8]{Joyce3}).

On a manifold $M^8$ admitting a torsion-free ${\rm Spin}(7)$-structure $\Phi$, 
there is a decomposition of $\Omega^4(M^8)$, 
the space of smooth 4-forms, induced from (\ref{eq:Spin7 irr decomp}):
$$
\Omega^4 (M^8) = 
\Omega^4_1 (M^8) \oplus \Omega^4_7 (M^8) \oplus \Omega^4_{27} (M^8) \oplus \Omega^4_{35} (M^8), 
$$
where we denote by $\Omega^4_k (M^8)$ the space of smooth sections of 
$\Lambda^4_k T^* M^8$. 
Set $(\Hh^4_k)_\Phi = \{ \xi \in \Omega^4_k (M^8) \mid d \xi = d *_\Phi \xi = 0 \}$, 
which is the space of harmonic forms in $\Omega^4_k (M^8)$, 
$b^4_k = \dim (\Hh^4_k)_\Phi$ and  
\begin{align} \label{eq:Spin7 harm}
\Hh_\Phi = (\Hh^4_1)_{\Phi} \oplus (\Hh^4_7)_{\Phi} \oplus (\Hh^4_{35})_{\Phi}. 
\end{align}

Define the moduli space $\Mm_{{\rm Spin}(7)}$ of torsion-free ${\rm Spin}(7)$-structures by 
\begin{align*}
\widehat \Mm_{{\rm Spin}(7)} &= \{ \Phi \in C^\infty (\Aa^4 M^8) \mid d \Phi = 0 \}, \\
\Mm_{{\rm Spin}(7)} &= \widehat \Mm_{{\rm Spin}(7)}/ {\rm Diff}_0 (M^8),  
\end{align*}
where $C^\infty (\Aa^4 M^8)$ is the space of smooth admissible 4-forms and 
${\rm Diff}_0 (M^8)$ is the identity component of the diffeomorphism group. 
Let $\pi: \widehat \Mm_{{\rm Spin}(7)} \rightarrow \Mm_{{\rm Spin}(7)}$ be the canonical projection. \\

As far as the author knows, the geometric structures of $\Mm_{{\rm Spin}(7)}$ have not been studied yet. 
Thus by recalling the result of \cite{Joyce1} about the smoothness of $\Mm_{{\rm Spin}(7)}$, 
we explain two pseudo-Riemannian metrics on $\Mm_{{\rm Spin}(7)}$ in detail.

Suppose that $M^8$ is compact. 
By \cite{Joyce1}, 
by fixing any $\Phi \in \widehat \Mm_{{\rm Spin}(7)}$, 
there exist open neighborhoods 
$\Uu \subset \Hh_\Phi$ 
of $0$ and 
$\Vv \subset \Mm_{{\rm Spin}(7)}$ 
of $\pi(\Phi)$
and a smooth map $\widehat \Phi: \Uu \rightarrow \widehat \Mm_{{\rm Spin}(7)}$
such that 
$\widehat \Phi (0) = \Phi, (d \widehat \Phi)_0 (\xi) = \xi$ for any 
$\xi \in \Hh_\Phi$, 
and 
$
\pi \circ \widehat \Phi: \Uu \rightarrow \Vv
$
is a homeomorphism. 
Then we see that 
$\Mm_{{\rm Spin}(7)}$ is a smooth manifold of dimension 
$b^4_1 + b^4_7 + b^4_{35}$, 
which is known to be a topological invariant. 
Thus we have the identification 
\begin{align} \label{eq:Spin7 tangent}
T_{\pi(\Phi)} \Mm_{{\rm Spin}(7)} = 
(d \pi)_\Phi (\Hh_\Phi). 
\end{align}
However, $\Mm_{{\rm Spin}(7)}$ is not known to be an affine manifold as in the 
cases of $G_2$ and ${\rm SL}(3, \C)$.

By (\ref{eq:Spin7 tangent}), 
we can define two canonical pseudo-Riemannian metrics 
$g_{I}$ and $g_{L^2}$ on $\Mm_{{\rm Spin}(7)}$. 
\begin{enumerate}
\renewcommand{\labelenumi}{(\arabic{enumi})}
\item 
For $\Phi \in \widehat \Mm_{{\rm Spin}(7)}$ and $\xi, \eta \in \Hh_\Phi$, define 
\begin{align}\label{eq:def gI}
(g_I)_{\pi(\Phi)} \left((d \pi)_\Phi (\xi), (d \pi)_\Phi (\eta) \right) = \int_{M^8} \xi \wedge \eta, 
\end{align}
which is induced from the intersection form on $H^4 (M^8, \R)$. 

\item 
For $\Phi \in \widehat \Mm_{{\rm Spin}(7)}$ and $\xi, \eta \in \Hh_\Phi$, define 
\begin{align}\label{eq:def gL2}
(g_{L^2})_{\pi(\Phi)} \left((d \pi)_\Phi (\xi), (d \pi)_\Phi (\eta) \right) 
= \int_{M^8} \xi \wedge *_\Phi \eta,
\end{align}
which is induced from the $L^2$-metric on $M^8$, 
and hence $g_{L^2}$ is positive definite. 
\end{enumerate}

\begin{lemma}
The pseudo-Riemannian metrics $g_I$ and $g_{L^2}$ are well-defined. 
\end{lemma}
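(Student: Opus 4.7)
The plan is to verify the two conditions packaged in ``well-defined'': (a) that the right-hand sides of (\ref{eq:def gI}) and (\ref{eq:def gL2}) depend only on $[\Phi] \in \Mm_{{\rm Spin}(7)}$ and on the tangent vectors $(d\pi)_\Phi(\xi), (d\pi)_\Phi(\eta) \in T_{[\Phi]}\Mm_{{\rm Spin}(7)}$, and (b) that the resulting bilinear forms are non-degenerate. By the slice result of \cite{Joyce1} recalled just above, $(d\pi)_\Phi|_{\Hh_\Phi}$ is a linear isomorphism onto $T_{[\Phi]}\Mm_{{\rm Spin}(7)}$, so a tangent vector and its harmonic representative are in bijective correspondence once $\Phi$ is fixed. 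What must be checked is that replacing $\Phi$ by another representative $\Phi' = \phi^*\Phi$ of the same class, with $\phi \in {\rm Diff}_0(M^8)$, produces the same value; pullback gives an isomorphism $\phi^*: \Hh_\Phi \rightarrow \Hh_{\Phi'}$, and $\pi \circ \phi^* = \pi$ implies $(d\pi)_{\Phi'}\circ \phi^* = (d\pi)_\Phi$ on $\Hh_\Phi$, so the tangent vector represented by $\xi \in \Hh_\Phi$ is equally represented by $\phi^*\xi \in \Hh_{\Phi'}$.

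Next I would verify diffeomorphism invariance of the two integrals. For $g_I$, an orientation-preserving change of variables gives $\int_{M^8} \phi^*\xi \wedge \phi^*\eta = \int_{M^8} \phi^*(\xi \wedge \eta) = \int_{M^8} \xi \wedge \eta$, using that $\phi \in {\rm Diff}_0(M^8)$ preserves orientation. For $g_{L^2}$, the identity $\Phi' = \phi^*\Phi$ implies $g_{\Phi'} = \phi^* g_\Phi$, and naturality of the Hodge star under isometries gives $*_{\Phi'}(\phi^*\eta) = \phi^*(*_\Phi\eta)$; combining with the previous identity yields $\int_{M^8}\phi^*\xi \wedge *_{\Phi'}\phi^*\eta = \int_{M^8}\xi \wedge *_\Phi\eta$, as required. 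The same computation also shows that $\phi^*$ carries $\Hh_\Phi$ into $\Hh_{\Phi'}$, since it preserves both $d$ and $*_\Phi$-harmonicity.

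For non-degeneracy, I would use the decomposition (\ref{eq:Spin7 harm}) together with (\ref{eq:Spin7 irr decomp 4}), which says that $(\Hh^4_1)_\Phi \oplus (\Hh^4_7)_\Phi$ consists of self-dual and $(\Hh^4_{35})_\Phi$ of anti-self-dual harmonic forms. The pointwise identity $\alpha \wedge \beta = g_\Phi(\alpha, *_\Phi\beta)\,{\rm vol}_\Phi$ for $4$-forms on the $8$-manifold, combined with the facts that $*_\Phi$ acts as $+1$ on $\Lambda^4_+$ and $-1$ on $\Lambda^4_-$ and that these eigenspaces are $g_\Phi$-orthogonal, yields $\alpha \wedge \beta = \pm g_\Phi(\alpha, \beta)\,{\rm vol}_\Phi$ when $\alpha$ and $\beta$ lie in the same eigenspace (sign $+$ self-dual, $-$ anti-self-dual), and $\alpha \wedge \beta = 0$ when they lie in opposite eigenspaces. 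Consequently $g_I$ restricts to $+g_{L^2}$ on $(\Hh^4_1)_\Phi \oplus (\Hh^4_7)_\Phi$ and to $-g_{L^2}$ on $(\Hh^4_{35})_\Phi$, while the two summands are mutually $g_I$-orthogonal. Since $g_{L^2}$ is manifestly positive definite, $g_I$ is non-degenerate of signature $(b^4_1 + b^4_7,\; b^4_{35})$.

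The only point genuinely requiring care is the compatibility $(d\pi)_{\Phi'}\circ \phi^* = (d\pi)_\Phi$, which identifies $\phi^*\xi$ as the harmonic representative at $\Phi'$ of the same tangent vector that $\xi$ represents at $\Phi$; once this is in hand, both assertions reduce to elementary functoriality of pullback under integration and the Hodge star, plus the self-dual/anti-self-dual trick for non-degeneracy.
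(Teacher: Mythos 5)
Your proof is correct and follows essentially the same route as the paper's: both reduce well-definedness to diffeomorphism invariance of the two integrals, using $g_{\theta^*\Phi}=\theta^*g_\Phi$, the resulting naturality of the Hodge star, the fact that $\theta^*$ maps $\Hh_\Phi$ to $\Hh_{\theta^*\Phi}$, and the compatibility $(d\pi)_{\theta^*\Phi}\circ\theta^*=(d\pi)_\Phi$. Your additional verification of non-degeneracy via the self-dual/anti-self-dual splitting is not in the paper's proof of the lemma itself, but it reproduces exactly the signature computation the paper records immediately afterwards in (\ref{eq:def gI 2}) and (\ref{eq:def gL2 2}), so it is a harmless (and arguably welcome) supplement.
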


\begin{proof}
Take any $\Phi \in \widehat \Mm_{{\rm Spin}(7)}$ and $\theta \in {\rm Diff}_0 (M^8)$. 
The Riemannian metric $g_\Phi$ induced from $\Phi$ is 
given explicitly in \cite[Theorem 4.3.5]{Karigiannis}, which implies that 
$$
g_{\theta^* \Phi} = \theta^* g_\Phi. 
$$
Then we easily see that the induced Hodge stars are related by 
\begin{align} \label{eq:Spin7 Hodge relation}
*_{\theta^* \Phi} = \theta^* *_\Phi (\theta^{-1})^*. 
\end{align}
Then for any $\xi \in \Hh_\Phi$, 
we have  
$\theta^* \xi \in \Hh_{\theta^* \Phi}$. 
The equation $\pi = \pi \circ \theta^*$ implies that 
$(d \pi)_\Phi (\xi) = (d \pi)_{\theta^* \Phi} (\theta^* \xi)$. 
Thus we only have to prove 
$$
\int_{M^8} \theta^* \xi \wedge \theta^* \eta = \int_{M^8} \xi \wedge \eta, \qquad
\mbox{and} \qquad
\int_{M^8} \theta^* \xi \wedge *_{\theta^* \Phi} \theta^* \eta = \int_{M^8} \xi \wedge *_\Phi \eta 
$$
for any $\xi, \eta \in \Hh_\Phi$. 
These equations follow from $\theta \in {\rm Diff}_0 (M^8)$ and 
(\ref{eq:Spin7 Hodge relation}). 
\end{proof}

If we decompose 
$\xi = \xi_1 + \xi_7 + \xi_{35}$ and $\eta = \eta_1 + \eta_7 + \eta_{35}$
following (\ref{eq:Spin7 harm}), 
the equation (\ref{eq:Spin7 irr decomp 4}) implies that 
\begin{align} 
(g_I)_{\pi(\Phi)} \left((d \pi)_\Phi (\xi), (d \pi)_\Phi (\eta) \right)
=& 
\int_{M^8} \xi_1 \wedge *_\Phi \eta_1 + \int_{M^8} \xi_7 \wedge *_\Phi \eta_7 
- \int_{M^8} \xi_{35} \wedge *_\Phi \eta_{35},  
\label{eq:def gI 2} \\
(g_{L^2})_{\pi(\Phi)} \left((d \pi)_\Phi (\xi), (d \pi)_\Phi (\eta) \right)
=& 
\int_{M^8} \xi_1 \wedge \eta_1 + \int_{M^8} \xi_7 \wedge \eta_7 - \int_{M^8} \xi_{35} \wedge \eta_{35}. 
\label{eq:def gL2 2}
\end{align}
In particular, (\ref{eq:def gI 2}) implies that $g_I$ has signature $(1+b^4_7, b^4_{35})$ 
and it is Lorentzian  
if $M^8$ is irreducible.

Define a function $f: \Mm_{{\rm Spin}(7)} \rightarrow \R$ by 
$$
f (\pi(\Phi)) = 7 {\rm Vol}(\Phi) = 7 \int_{M^8} {\rm vol}_\Phi = \frac{1}{2} \int_{M^8} \Phi \wedge \Phi, 
$$
which is homogeneous of degree $2$ 
w.r.t. the canonical $\R_{>0}$-action on $\Mm_{{\rm Spin}(7)}$.

\begin{proposition} \label{prop:homog pair Spin7}
The pairs $(g_I, f)$ and $(g_{L^2}, f)$ are homogeneous pairs of degree $2$ 
w.r.t. the canonical $\R_{>0}$-action on $\Mm_{{\rm Spin}(7)}$. 
\end{proposition}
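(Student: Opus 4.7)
The plan is to verify directly the three defining identities of a homogeneous pair of degree $\alpha = 2$. First, I would set up the scaling behaviour of the ${\rm Spin}(7)$-data under $\Phi \mapsto \lambda \Phi$. Using the explicit formula for $g_\Phi$ (e.g.\ \cite[Theorem 4.3.5]{Karigiannis}), frames adapted to $\Phi$ rescale by $\lambda^{1/4}$ to frames adapted to $\lambda \Phi$, so $g_{\lambda \Phi} = \lambda^{1/2} g_\Phi$ and ${\rm vol}_{\lambda \Phi} = \lambda^2 {\rm vol}_\Phi$. Since the conformal weight of the Hodge star on $k$-forms in dimension $n$ is $n-2k$, which vanishes here ($n=8$, $k=4$), the identity $*_{\lambda \Phi} = *_\Phi$ holds on $\Omega^4(M^8)$. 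Consequently the ${\rm Spin}(7)$-invariant subbundles $\Lambda^4_k T^*M^8$ coincide for $\Phi$ and $\lambda \Phi$, and $\Hh_{\lambda \Phi} = \Hh_\Phi$ as subspaces of $\Omega^4(M^8)$. The canonical action $m_\lambda([\Phi]) = [\lambda \Phi]$ then commutes with Joyce's slice, and any $\xi \in \Hh_\Phi$ realised via a path $\Phi_t$ with $\dot \Phi_0 = \xi$ pushes forward under $m_\lambda$ to $\lambda \xi \in \Hh_{\lambda \Phi}$, i.e.\ $(m_\lambda)_*\bigl(d\pi_\Phi(\xi)\bigr) = d\pi_{\lambda \Phi}(\lambda \xi)$.

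With this in place, the relations $m_\lambda^* f = \lambda^2 f$ and $m_\lambda^* g = \lambda^2 g$ (for both $g_I$ and $g_{L^2}$) are direct consequences of the definitions: $(\lambda \Phi) \wedge (\lambda \Phi) = \lambda^2 \Phi \wedge \Phi$ for $f$, and inserting $(m_\lambda)_*(d\pi \xi) = d\pi(\lambda \xi)$ into (\ref{eq:def gI}) and (\ref{eq:def gL2}) produces an overall factor of $\lambda^2$ (with $*_{\lambda \Phi}|_{\Omega^4} = *_\Phi|_{\Omega^4}$ used in the $g_{L^2}$ case).

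For the remaining identity $g(P, \cdot) = df$, note that $P_{[\Phi]} = d\pi_\Phi(\Phi)$, which makes sense because $\Phi \in (\Hh^4_1)_\Phi$ (it is closed and self-dual, hence coclosed). For an arbitrary $\xi \in \Hh_\Phi$, direct differentiation along a path with $\dot\Phi_0 = \xi$ gives $df(d\pi_\Phi \xi) = \int_{M^8} \Phi \wedge \xi$, which equals $g_I(P, d\pi_\Phi \xi)$ by definition. For $g_{L^2}$, I would decompose $\xi = \xi_1 + \xi_7 + \xi_{35}$ according to (\ref{eq:Spin7 harm}); since $\xi_{35}$ is anti-self-dual, $\Phi \wedge \xi_{35} = -\Phi \wedge *_\Phi \xi_{35} = -\langle \Phi, \xi_{35}\rangle_{g_\Phi} {\rm vol}_\Phi$, which vanishes by the pointwise orthogonality $\Lambda^4_1 \perp \Lambda^4_{35}$. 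Hence $\int_{M^8} \Phi \wedge \xi = \int_{M^8} \Phi \wedge (\xi_1 + \xi_7) = \int_{M^8} \Phi \wedge *_\Phi(\xi_1 + \xi_7) = \int_{M^8} \Phi \wedge *_\Phi \xi$, i.e.\ $g_{L^2}(P, d\pi_\Phi \xi) = df(d\pi_\Phi \xi)$.

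The only delicate step is the very first one: making precise that the decomposition (\ref{eq:Spin7 irr decomp}) is preserved by positive scaling of $\Phi$ and that $\Hh_{\lambda \Phi} = \Hh_\Phi$ as linear subspaces of $\Omega^4(M^8)$. Once this conformal-invariance-at-middle-degree observation is recorded, the remaining verifications are routine unwindings of the definitions of $g_I$, $g_{L^2}$ and $f$.
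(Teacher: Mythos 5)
Your proof is correct and follows essentially the same route as the paper: both verify the scaling identities directly from the definitions and then reduce $g_I(P,\cdot)=g_{L^2}(P,\cdot)=df$ to the computation $df\bigl((d\pi)_\Phi(\xi)\bigr)=\int_{M^8}\Phi\wedge\xi=\int_{M^8}\Phi\wedge *_\Phi\xi_1$ using the type decomposition of $\Hh_\Phi$. The only difference is that you spell out the conformal weights ($g_{\lambda\Phi}=\lambda^{1/2}g_\Phi$, $*_{\lambda\Phi}=*_\Phi$ on $4$-forms, $\Hh_{\lambda\Phi}=\Hh_\Phi$) which the paper's proof leaves implicit in the phrase ``we see that (\ref{eq:homog g}) and (\ref{eq:homog f}) are satisfied''.
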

\begin{proof}
By (\ref{eq:def gI}) and (\ref{eq:def gL2 2}), we see that 
(\ref{eq:homog g}) and (\ref{eq:homog f}) are satisfied for $\alpha = 2$. 
The vector field $P$ generated by 
the canonical $\R_{>0}$-action on $\Mm_{{\rm Spin}(7)}$ 
is given by 
$$
P_{\pi(\Phi)} = (d \pi)_\Phi (\Phi).
$$
Then by (\ref{eq:def gI 2}), we have 
for any $\eta \in \Hh_\Phi$, 
$$
(g_I)_{\pi(\Phi)} (P_{\pi(\Phi)}, (d \pi)_\Phi (\eta)) 
= \int_{M^8} \Phi \wedge *_\Phi \eta_1 
= (g_{L^2})_{\pi(\Phi)} (P_{\pi(\Phi)}, (d \pi)_\Phi (\eta)). 
$$
On the other hand, we compute 
\begin{align*}
(df)_{\pi(\Phi)} ((d \pi)_\Phi (\eta)) 
&= d (f \circ  \pi \circ \widehat \Phi)_0 \left( \left. \frac{d}{dt} (t \eta) \right|_{t=0} \right)\\
&= \frac{1}{2} \left. \frac{d}{dt} \int_{M^8} \widehat \Phi (t \eta) \wedge \widehat \Phi (t \eta) \right|_{t=0}\\
&= \int_{M^8} \eta \wedge \Phi
= (g_I)_{\pi(\Phi)} (P_{\pi(\Phi)}, (d \pi)_\Phi (\eta)). 
\end{align*}
Hence the proof is completed. 
\end{proof}

Two pseudo-Riemannian metrics $g_I$ and $g_{L^2}$ are related as follows. 

\begin{remark}
If $M^8$ is irreducible, $g_I$ and $g_{L^2}$ are related by 
$g_{L^2} = \widehat{g_I}$, where we use the notation in (\ref{eq:def hat g}). 

Indeed, 
take any $\Phi \in \widehat \Mm_{{\rm Spin}(7)}$ and $\eta = \eta_1 + \eta_{35} 
\in (\Hh^4_1)_\Phi \oplus (\Hh^4_{35})_\Phi$. 
By the proof of Proposition \ref{prop:homog pair Spin7}, we have 
$(df)_{\pi(\Phi)} ((d \pi)_\Phi (\eta)) = \la \eta, \Phi \ra_{L^2}$, 
where $\la \cdot, \cdot \ra$ is the $L^2$-metric on the space of differential forms on $M^8$ 
induced from $g_{\Phi}$. 
Since $(\Hh^4_1)_\Phi = \R \Phi$, we have 
$\eta_1 = \la \eta, \Phi \ra_{L^2} \Phi/ \la \Phi, \Phi \ra_{L^2}$. 
Then we compute 
$$
\frac{(df \otimes df)_{\pi(\Phi)} ((d \pi)_\Phi (\eta), (d \pi)_\Phi (\eta)) }{f (\pi (\Phi))}
= 
\frac{2 \la \eta, \Phi \ra_{L^2}^2}{\la \Phi, \Phi \ra_{L^2}}
=
2 \la \eta, \eta_1 \ra_{L^2}
=
2 \la \eta_1, \eta_1 \ra_{L^2}. 
$$
Then (\ref{eq:def hat g}) and (\ref{eq:def gI 2}) imply that 
$$
(\widehat{g_I})_{\pi(\Phi)} \left((d \pi)_\Phi (\eta), (d \pi)_\Phi (\eta) \right)
= 
2 \la \eta_1, \eta_1 \ra_{L^2} 
- \left( \la \eta_1, \eta_1 \ra_{L^2} - \la \eta_{35}, \eta_{35} \ra_{L^2} \right)
= 
\la \eta, \eta \ra_{L^2}. 
$$

\end{remark}

\begin{remark}
As far as the author knows, 
there are no known examples of an 8-dimensional manifold 
admitting a torsion-free ${\rm Spin}(7)$-structure with $\dim \Mm_{{\rm Spin}(7)}=2$. 
It would be interesting to construct such examples. 
It is because the above pseudo-Riemannian metrics are flat by Corollary \ref{cor:2dim csc Hess}, 
and hence $\Mm_{{\rm Spin}(7)}$ is expected to have simpler geometric structures, 
which might be useful to study the general cases.

The metric completion of $\Mm_{{\rm Spin}(7)}$ has not been studied yet. 
We expect that the same statements as in Remark \ref{rem:comp G2} hold. 
\end{remark}



\subsubsection{The space of Riemannian metrics}
\label{subsec:Riem met}

Let $M$ be a compact oriented $n$-dimensional manifold 
and let $\Mm$ be the space of all smooth Riemannian metrics on $M$. 
This is an open cone in the Fr\'echet space $\Gamma(S^2 T^*M)$, 
the space of symmetric $(0, 2)$-tensors on $M$. 
Thus $\Mm$ is a Fr\'echet manifold 
and its tangent space at $g \in \Mm$ is canonically identified with $\Gamma(S^2 T^*M)$. 

For $g \in \Mm$ and $h_1, h_2 \in T_g \Mm \cong \Gamma(S^2 T^*M)$, 
define a weak Riemannian metric $g_E$, which is called the Ebin metric, on $\Mm$ by 
\begin{align} \label{eq:Ebin metric}
(g_E)_g (h_1, h_2) = \int_M {\rm tr} (g^{-1} h_1 g^{-1} h_2) {\rm vol}_g, 
\end{align}
where $g^{-1} h_i \in \Gamma(T^*M \otimes TM)$ is 
the contraction of the dual Riemannian metric of $g$ and $h_i$, 
and ${\rm vol}_g$ is the volume form induced from $g$. 

The local structure of $(\Mm, g_E)$ was first studied in \cite{FG, GM}. 
The authors first proved the splitting similar to Theorem \ref{thm:split} for $(\Mm, g_E)$. 
Then they showed that the sectional curvature of $g_E$ is 
nonpositive (\cite[Corollary 1.17]{FG}) 
and gave the geodesics explicitly (\cite[Theorem 2.3]{FG}, \cite[Theorem 3.2]{GM}).

On $\Mm$, there is a canonical function $f: \Mm \rightarrow \R$ given by 
\begin{align} \label{eq:Ebin vol}
f(g) = 2 {\rm Vol}(g) = 2 \int_{M} {\rm vol}_g. 
\end{align}

Clarke showed that 
the pseudometric $d_{g_E}$ induced from $g_E$ is the metric in \cite{Clarke1} 
and determined the metric completion $\overline{(\Mm, d_{g_E})}$ of $\Mm$ w.r.t. $d_{g_E}$ 
in \cite{Clarke3}. 

\begin{theorem}[{\cite[Theorem 5.19]{Clarke3}}] \label{thm:Clarke3}
Let $\Mm_{finite}$ be the set of measurable positive-semidefinite sections $g :M \rightarrow S^2T^*M$ 
with $f(g) < \infty$. 
Set $\widehat{\Mm_{finite}} = \Mm_{finite}/ \sim$, 
where $\sim$ is the equivalence relation defined by 
$g \sim h \Leftrightarrow$ 
$g(x)=h(x)$ or $g(x) \neq h(x)$ and $\det g(x) = \det h(x)=0$ 
for almost everywhere $x \in M$. 

Then the metric completion $\overline{(\Mm, d_{g_E})}$ 
of $\Mm$ w.r.t. $d_{g_E}$ is identified with $\widehat{\Mm_{finite}}$. 
\end{theorem}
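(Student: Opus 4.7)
My plan is to use the splitting of $\Mm$ along the volume functional together with a delicate pointwise analysis of the Ebin metric. By Theorem \ref{thm:split} applied to the homogeneous pair structure on $(\Mm, g_E)$, the Ebin metric is isometric to a warped product $\R_{>0} \times \Mm_l$ with metric of the form $(v(r)/r)\,dr^2 + \cdots$; this reduces the completion problem to (i) understanding the behaviour of the volume direction, which is controlled by Corollary \ref{cor:conv r} and Lemma \ref{lem:diam}, and (ii) describing the completion of a single level set $(\Mm_l, g_l)$, where $g_l$ is the restriction of the Ebin metric to the subspace of metrics of fixed total volume.

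The heart of the argument is pointwise. At each $x \in M$, the integrand in (\ref{eq:Ebin metric}) reproduces the analogous Ebin-type metric on the finite dimensional cone $S^2_+ T_x^*M$ of positive definite symmetric forms. On this finite dimensional cone the analogue of $g_E$ is known to be complete after passage to the closure $\overline{S^2_+ T_x^*M}$ consisting of positive semidefinite forms; moreover the boundary stratum $\{\det = 0\}$ sits at finite distance, and once one reaches the stratum the pointwise distance collapses. I would first prove this pointwise completion statement by explicitly integrating the Ebin-type geodesic equations of Proposition \ref{prop:geod explicit} on $S^2_+ T_x^*M \cong \R_{>0} \times SL$, identifying the closure as the set of positive semidefinite forms modulo the relation that identifies all degenerate forms above any given ``zero locus''. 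This furnishes the pointwise model for the equivalence relation $\sim$.

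The key global estimates I would then establish are (a) an inequality bounding $d_{g_E}$ from above by an expression involving $\int_M |\sqrt{\det g_0(x)} - \sqrt{\det g_1(x)}|\,dx$ plus the $L^2$-distance of the conformal factors, and (b) a matching lower bound, so that $d_{g_E}$-Cauchy sequences correspond precisely to sequences of measurable positive-definite sections Cauchy with respect to these pointwise quantities. After extracting a subsequence, such a sequence converges almost everywhere to a measurable positive semidefinite section of finite volume, giving a well defined element of $\widehat{\Mm_{\mathrm{finite}}}$, and the quotient by $\sim$ is forced because two Cauchy sequences whose pointwise limits differ only on $\{\det = 0\}$ must have zero $d_{g_E}$-distance. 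Conversely, given any element of $\Mm_{\mathrm{finite}}$, I would approximate it by smooth positive definite metrics using standard mollification combined with a small positive definite perturbation, and verify $d_{g_E}$-convergence using the upper bound (a).

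The hard part will be the lower bound (b) and the rigidity of the degenerate stratum: one must prove that if two limits differ on a set of positive measure outside the common degeneracy locus, then any pair of approximating sequences stays uniformly bounded away from each other in $d_{g_E}$. This requires a careful Cauchy--Schwarz style argument on the fibrewise decomposition and an interchange of the pointwise completion (on $S^2_+ T_x^*M$) with the $M$-integration. A second delicate point is showing that the global completion really is a quotient rather than just a subspace: here one exploits Lemma \ref{lem:diam} type estimates to see that whenever the volume density degenerates on a set $U \subset M$, the pointwise contribution from $U$ to $d_{g_E}$ vanishes, forcing the identification in the definition of $\sim$.
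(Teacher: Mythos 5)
First, note that the paper does not prove this statement: it is Clarke's theorem, imported verbatim from \cite{Clarke3}, and the text only records the three-step outline of Clarke's proof ($\omega$-convergence of a subsequence of any $d_{g_E}$-Cauchy sequence; the equivalence of $\omega$-limits with equality of classes in the completion; surjectivity onto $\Mm_{finite}$). Your fibrewise picture is broadly consistent with that outline --- the pointwise model on the cone of positive definite symmetric forms on $T_x^*M$, the finite distance to the boundary $\{\det = 0\}$, the collapse of all degenerate forms to mutual distance zero, and a.e.\ convergence of subsequences are all genuine ingredients of Clarke's argument. But as a proof your proposal has two real gaps.

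The entire content of the theorem is your ``matching lower bound (b)'', i.e.\ Clarke's step (ii): that two $d_{g_E}$-Cauchy sequences whose a.e.\ limits differ on a positive-measure set away from the common degeneracy locus stay a definite $d_{g_E}$-distance apart. This is not a Cauchy--Schwarz exercise. An $L^2$-type weak Riemannian metric on an infinite-dimensional manifold can a priori have identically vanishing induced distance (the paper cites \cite{MM} for exactly this phenomenon), so no integrated pointwise lower bound is automatic; Clarke's proof of the lower bound occupies the bulk of \cite{Clarke3} and rests on quantitative estimates of the form ``if $g_1$ fails to dominate $(1-\epsilon)g_0$ on a set $E$ of positive $g_0$-volume, then $d_{g_E}(g_0,g_1)$ is bounded below by an explicit positive function of $\epsilon$ and the volume of $E$'', combined with the $\omega$-convergence machinery. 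Deferring this to ``the hard part'' leaves the theorem unproved. Separately, your proposed reduction via Theorem \ref{thm:split} to the completion of a level set $(\Mm_l, g_l)$ does not reduce the difficulty: $\overline{\Mm_l}$ is exactly as unknown as $\overline{\Mm}$ a priori, and in this paper the logic runs in the opposite direction --- Clarke's theorem for $\overline{(\Mm, d_{g_E})}$ is taken as input and Proposition \ref{prop:comp level} is then used to read off $\overline{\Mm_1}$, which in turn feeds Theorem \ref{thm:comp riem}. Using the splitting to prove the present statement would therefore be circular unless you supply an independent determination of $\overline{\Mm_l}$, which is nowhere in your sketch.
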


For the proof, Clarke first introduced a notion of 
the $\omega$-convergence for Cauchy sequences in $\Mm$, 
which is a kind of pointwise a.e.-convergence. 
Then as summarized in \cite[p.60]{Clarke4}, 
Theorem \ref{thm:Clarke3} is proved in the following steps. 

(i) For any Cauchy sequence $\{g_k \} \subset \Mm$, 
there exists an $\omega$-convergent subsequence. 
Denote by $[g_0] \in \widehat{\Mm_{finite}}$ the $\omega$-limit. 
(ii) Two $\omega$-convergent subsequences 
$\{ g^0_k \}$ and $\{ g^1_k \}$ have the same $\omega$-limit 
if and only if $[g^0_k] = [g^1_k] \in \overline{(\Mm, d_{g_E})}.$
(iii) For each element of $\Mm_{finite}$, 
there exists a sequence in $\Mm$ $\omega$-converging to it. 

Then a map 
$\overline{(\Mm, d_{g_E})} \ni [g_k] \mapsto [g_0] \in \widehat{\Mm_{finite}}$
gives a bijection, 
where we use the notation in Definition \ref{def:comp}. 
Note that by \cite[Theorem 4.21]{Clarke3}
$$
f (g_0) = \lim_{k \rightarrow \infty} f (g_k). 
$$

Using this result, 
Clarke and Rubinstein (\cite{CR2})
showed that $d_{g_E/f^p}$ is a metric for any $p \in \mathbb{Z}$ and 
determined the metric completion $\overline{(\Mm, d_{g_E/f^p})}$ 
of $\Mm$ w.r.t. $d_{g_E/f^p}$.

\begin{theorem}[{\cite[Theorem 5.3]{CR2}}] \label{thm:CR}
The metric completion $\overline{(\Mm, d_{g_E/f^p})}$ 
of $\Mm$ w.r.t. $d_{g_E/f^p}$ is identified with the following. 
\begin{enumerate}
\renewcommand{\labelenumi}{(\arabic{enumi})}
\item If $p=1$, 
$\widehat{\Mm_{finite, +}} := \Mm_{finite, +}/\sim$, where 
$\Mm_{finite, +} = \{ g \in \Mm_{finite} \mid f(g)>0 \}$. 
\item If $p<1$, 
$\widehat{\Mm_{finite}}$. 
\item 
If $p >1$, 
$\widehat{\Mm_{finite, +}} \cup \{ g_\infty \}$, 
where $g_\infty$ corresponds to the single equivalence class of Cauchy
sequences $\{ h_k \}$ with $\lim_{k \rightarrow \infty} f(h_k) = \infty$. 
\end{enumerate}
\end{theorem}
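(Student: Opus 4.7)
My plan is to recognize $(g_E, f)$ as a homogeneous pair on $\Mm$ of degree $n/2$ (where $n = \dim M$) and to rewrite $g_E / f^p$ as the conformal transformation $(v \circ f) g_E$ with $v(r) = r^{-p}$, so that Theorem \ref{thm:comp} applies; combining its output with Clarke's Theorem \ref{thm:Clarke3} and with Proposition \ref{prop:comp level} will then yield all three cases.

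First I would verify the homogeneous pair axioms for $(g_E, f)$ with respect to the canonical scaling action $m_\lambda (g) = \lambda g$. The generator is $P_g = g$; a direct computation, using $(\lambda g)^{-1} = \lambda^{-1} g^{-1}$ and ${\rm vol}_{\lambda g} = \lambda^{n/2} {\rm vol}_g$, gives $m_\lambda^* g_E = \lambda^{n/2} g_E$, $m_\lambda^* f = \lambda^{n/2} f$, and $g_E(P, h)|_g = \int_M {\rm tr}(g^{-1} h) \, {\rm vol}_g = (df)_g(h)$, so that $\alpha = n/2$. Setting $v(r) = r^{-p}$, I would then compute $\hat T(r) = \int_{R_0}^r q^{-(p+1)/2} dq$: when $p = 1$ both $\hat T_0$ and $\hat T_\infty$ are infinite (case (1) of Theorem \ref{thm:comp}); when $p < 1$ one has $\hat T_0 \in \R$, $\hat T_\infty = \infty$ and $\lim_{r \to 0} r v(r) = \lim r^{1-p} = 0$ (case (2)); when $p > 1$ one has $\hat T_0 = -\infty$, $\hat T_\infty \in \R$ and $\lim_{r \to \infty} r v(r) = 0$ (case (3)).

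The crucial geometric step is to identify $\overline{\Mm_l}$ concretely. Applying Proposition \ref{prop:comp level} to the unperturbed Ebin metric ($v = 1$), $\overline{\Mm_l}$ is homeomorphic to the subset of $\overline{(\Mm, d_{g_E})}$ consisting of classes $[g_k]$ with $\lim f(g_k) = l$. Invoking Clarke's Theorem \ref{thm:Clarke3} together with the continuity $f(g_0) = \lim f(g_k)$ from \cite[Theorem 4.21]{Clarke3} identifies this subset with $\{[g] \in \widehat{\Mm_{finite, +}} : f(g) = l\}$. Since every element of $\widehat{\Mm_{finite, +}}$ can be uniquely rescaled by $(l / f(g))^{2/n}$ into this slice, the map $\widehat{\Mm_{finite, +}} \to \R_{>0} \times \overline{\Mm_l}$, $[g] \mapsto (f(g), [(l/f(g))^{2/n} g])$, is a homeomorphism. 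This immediately yields (1). For (2) the added apex $*_0$ supplied by Theorem \ref{thm:comp} collects all Cauchy sequences with $f \to 0$, which Clarke's equivalence relation $\sim$ identifies with the single degenerate class in $\widehat{\Mm_{finite}} \setminus \widehat{\Mm_{finite, +}}$. For (3) the apex $*_\infty$ collects all Cauchy sequences with $f \to \infty$ and is, by construction, the single new point $g_\infty$.

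The main obstacle will be the careful matching of topologies. Theorem \ref{thm:comp} builds the completion abstractly from the warped-product splitting and equips the added apex(es) with the weak topology generated by neighborhoods of the form $\pi_0([0, \epsilon) \times \overline{\Mm_l})$ or $\pi_\infty((R, \infty] \times \overline{\Mm_l})$, whereas Clarke's description is in terms of pointwise a.e.\ $\omega$-convergence of sections. Showing that these two descriptions coincide — in particular that Clarke's quotient by $\sim$ really collapses the $f = 0$ stratum to a single point in the $p < 1$ case, and that all divergent $f \to \infty$ sequences are mutually equivalent in the $p > 1$ case — requires the diameter estimates of Lemma \ref{lem:diam} combined with a compatibility check against the $\omega$-convergence framework of \cite{Clarke3}. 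That compatibility check is the delicate part of the argument; once it is in place, the three statements follow by bookkeeping.
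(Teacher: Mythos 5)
Your proposal is correct and is essentially the argument the paper itself uses: the paper does not reprove Theorem~\ref{thm:CR} (it cites it from \cite{CR2}) but establishes the generalization Theorem~\ref{thm:comp riem} by precisely your route --- recognizing $(g_E,f)$ as a homogeneous pair of degree $n/2$ (Proposition~\ref{prop:homog pair riem}), identifying $\overline{\Mm_l}$ with the slice $\{[g]\in\widehat{\Mm_{finite,+}} : f(g)=l\}$ via Proposition~\ref{prop:comp level} and Theorem~\ref{thm:Clarke3}, and then reading off the four cases of Theorem~\ref{thm:comp}; your computation of $\hat T_0$, $\hat T_\infty$ and $\lim r\,v(r)$ for $v(r)=r^{-p}$ correctly sorts $p=1$, $p<1$, $p>1$ into cases (1), (2), (3). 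The topological compatibility you flag as delicate is handled in the paper only by asserting ``canonical bijections,'' so your treatment is, if anything, slightly more explicit.
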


Now we show that we can generalize Theorem \ref{thm:CR} 
by our method. First, we prove the following.

\begin{proposition} \label{prop:homog pair riem}
The pair $(g_E, f)$ is a homogeneous pair of degree $n/2$ 
w.r.t. the canonical $\R_{>0}$-action on $\Mm$. 
\end{proposition}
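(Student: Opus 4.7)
The plan is to verify directly the three defining conditions of a homogeneous pair of degree $n/2$ from Definition \ref{def:homog pair}, applied to $g_E$, $f$, and the canonical $\R_{>0}$-action $m_\lambda(g) = \lambda g$ on $\Mm$.

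First I would identify the infinitesimal generator $P$. Since $m_{e^t}(g) = e^t g$, differentiating at $t=0$ gives $P_g = g \in T_g \Mm$ under the canonical identification $T_g \Mm \cong \Gamma(S^2 T^*M)$. The pushforward $(m_\lambda)_*$ acts on tangent vectors as $h \mapsto \lambda h$.

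Next I would verify the scaling relations. The two key pointwise facts are $(\lambda g)^{-1} = \lambda^{-1} g^{-1}$ and ${\rm vol}_{\lambda g} = \lambda^{n/2} {\rm vol}_g$ (since $\det$ scales by $\lambda^n$). Applying the second to the definition (\ref{eq:Ebin vol}) immediately gives $m_\lambda^* f = \lambda^{n/2} f$. For the Ebin metric (\ref{eq:Ebin metric}), the integrand transforms as
\[
{\rm tr}\bigl((\lambda g)^{-1}(\lambda h_1)(\lambda g)^{-1}(\lambda h_2)\bigr) = {\rm tr}(g^{-1} h_1 g^{-1} h_2),
\]
so the only scaling comes from the volume form, giving $m_\lambda^* g_E = \lambda^{n/2} g_E$.

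Finally, for the condition $g_E(P,\cdot) = df$, substituting $P_g = g$ into (\ref{eq:Ebin metric}) yields
\[
(g_E)_g(g, h) = \int_M {\rm tr}(g^{-1} h)\, {\rm vol}_g.
\]
On the other side, the standard first-variation formula $\left.\frac{d}{dt}{\rm vol}_{g+th}\right|_{t=0} = \tfrac{1}{2} {\rm tr}(g^{-1} h)\, {\rm vol}_g$ applied to (\ref{eq:Ebin vol}) gives $(df)_g(h) = \int_M {\rm tr}(g^{-1} h)\, {\rm vol}_g$, matching the left-hand side. Since none of these steps offers a real obstacle (each follows from elementary multilinear algebra together with the well-known first-variation formula for the volume form), the main care required is simply in tracking how the pushforward $(m_\lambda)_*$ interacts with the inverse tensor inside the trace; everything else is bookkeeping.
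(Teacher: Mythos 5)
Your proposal is correct and follows essentially the same route as the paper: identify $P_g = g$, check the two scaling relations from the pointwise facts $(\lambda g)^{-1}=\lambda^{-1}g^{-1}$ and ${\rm vol}_{\lambda g}=\lambda^{n/2}{\rm vol}_g$, and verify $g_E(P,\cdot)=df$ via the first variation of volume (noting that the factor $2$ in the definition of $f$ cancels the $\tfrac{1}{2}$ in that formula, exactly as you have it). The paper merely states the scaling checks as immediate from the definitions, so your version is just a more explicit writeup of the same argument.
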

\begin{proof}
By the definitions of $g_E$ and $f$, 
we see that 
(\ref{eq:homog g}) and (\ref{eq:homog f}) are satisfied for $\alpha = n/2$. 
The vector field $P$ generated by 
the canonical $\R_{>0}$-action on $\Mm_{{\rm Spin}(7)}$ 
is given by 
$P_g = g$ at $g \in \Mm$. 
Then for any $h \in T_g \Mm \cong \Gamma (S^2 T^* M)$ we compute 
\begin{align*}
(df)_g (h) 
=  \int_M {\rm tr} (g^{-1} h) {\rm vol}_g
= (g_E)_g (P_g, h), 
\end{align*}
and hence (\ref{eq:thm assump}) is satisfied. 
\end{proof}

Then by Theorems \ref{thm:comp}, \ref{thm:Clarke3} and 
Proposition \ref{prop:comp level}, we obtain the following.

\begin{theorem} \label{thm:comp riem}
Use the notation of Theorem \ref{thm:CR}. 
Let $v: \R_{>0} \rightarrow \R_{>0}$ be a smooth function.  
Let $\hat T_0$ and $\hat T_\infty$ be defined in (\ref{eq:tilde T0 Tinf}). 
Then the metric completion $\overline{(\Mm, d_{(v \circ f) g_E})}$ w.r.t. $(v \circ f) g_E$ 
is identified with the following. 

\begin{enumerate}
\renewcommand{\labelenumi}{(\arabic{enumi})} 
\item 
If $\hat T_0 = - \infty$ and $\hat T_\infty = \infty$, 
$$
\widehat{\Mm_{finite, +}}. 
$$ 
\item 
If $\hat T_0 \in \R$, $\hat T_\infty = \infty$ and $\lim_{r \rightarrow 0} r v(r) =0$, 
$$
\widehat{\Mm_{finite}}. 
$$
\item 
If $\hat T_0 = - \infty$, $\hat T_\infty \in \R$ and $\lim_{r \rightarrow \infty} r v(r) =0$,  
$$
\widehat{\Mm_{finite, +}} \cup \{ g_\infty \}. 
$$
\item 
If $\hat T_0 \in \R$, $\hat T_\infty \in \R$, $\lim_{r \rightarrow 0} r v(r) =0$ and 
$\lim_{r \rightarrow \infty} r v(r) =0$,  
$$
\widehat{\Mm_{finite}} \cup \{ g_\infty \}. 
$$
\end{enumerate}
\end{theorem}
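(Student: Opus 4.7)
The plan is to apply the splitting/completion machinery of Theorem \ref{thm:comp} directly to the homogeneous pair $(g_E, f)$ of degree $n/2$ provided by Proposition \ref{prop:homog pair riem}, and then identify the level-set completion $\overline{\Mm_l}$ via Clarke's result (Theorem \ref{thm:Clarke3}). Once $\overline{\Mm_l}$ is known, the four cases of Theorem \ref{thm:comp} will translate mechanically into the four cases of the statement.

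First, I need to check the two standing hypotheses of the completion part of Section \ref{sec:hp}: that $(v\circ f)g_E$ is a (weak) Riemannian metric whose induced pseudometric is a genuine metric. Positive definiteness is clear since $v>0$ and $g_E$ is positive definite. For the metric property, the strategy is to combine Clarke's theorem that $d_{g_E}$ is a metric with Lemmas \ref{lem:unif conti} and \ref{lem:R12}(1): via the isometry of Theorem \ref{thm:split}, $d_{(v\circ f)g_E}$ corresponds to the warped product metric on $\R_{>0}\times \Mm_l$, and these lemmas show that a warped product pseudometric is a metric as soon as the fiber pseudometric $d_{g_l}$ is. Since $d_{g_l}$ is just the restriction of $d_{g_E}$ to the level set, Clarke's result yields what is needed.

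The core identification is the following: under the homeomorphism of Proposition \ref{prop:comp level} applied to $v\equiv 1$, and Clarke's identification $\overline{(\Mm, d_{g_E})}\cong \widehat{\Mm_{finite}}$, the level-set completion $\overline{\Mm_l}$ corresponds to the set
\[
\left\{[g_0]\in \widehat{\Mm_{finite}} \ \middle|\ f(g_0)=l\right\}.
\]
Here I use that $f$ is continuous on the completion in the strong sense $f(g_0)=\lim_k f(g_k)$, which is exactly \cite[Theorem 4.21]{Clarke3}. Consequently the disjoint union $\R_{>0}\times \overline{\Mm_l}$ (ranging over all $l>0$, though $\overline{\Mm_l}$ is independent of $l$ up to isometry by Remark \ref{rem:indep l}) is in bijection with $\widehat{\Mm_{finite,+}}=\{[g]\in\widehat{\Mm_{finite}} : f(g)>0\}$, and this bijection will be seen to be a homeomorphism by combining the product-topology statement of Theorem \ref{thm:comp} with the continuity of $f$ on $\widehat{\Mm_{finite}}$.

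Having made this identification, the four cases of Theorem \ref{thm:comp} translate directly: case (1) yields $\widehat{\Mm_{finite,+}}$; in case (2) the extra point $*_0$ (collapsed from $\{0\}\times\overline{\Mm_l}$) is precisely the single equivalence class of degenerate semi-positive sections, so we recover all of $\widehat{\Mm_{finite}}$ since the $\sim$-relation of Theorem \ref{thm:Clarke3} identifies all such sections; case (3) adds an ``$f=\infty$'' point which is exactly Clarke--Rubinstein's $g_\infty$; case (4) combines both. The hypotheses $\lim_{r\to 0}rv(r)=0$ and $\lim_{r\to\infty}rv(r)=0$ in cases (2)--(4) correspond, via the identification $w(r)=rv(r)/l$ of Remark \ref{rem:hp wp}, to the conditions $\lim w =0$ appearing in Theorem \ref{thm:comp wp}, which are what guarantee that the ``cone point'' is genuinely a single point. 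The main obstacle will be step three: verifying that the abstract homeomorphism from Theorem \ref{thm:comp} respects Clarke's $\omega$-convergence description at the boundary, which amounts to showing that the bijection constructed is continuous in both directions on a neighborhood of the collapsed point(s). For this one uses the diameter estimates in Lemma \ref{lem:diam} together with the characterization of Cauchy sequences with $f(g_k)\to 0$ or $\infty$ from \cite{Clarke3, CR2}.
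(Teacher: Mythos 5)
Your proposal is correct and follows essentially the same route as the paper: identify $\overline{\Mm_l}$ with $\{[g_0]\in\widehat{\Mm_{finite}} : f(g_0)=l\}$ via Proposition \ref{prop:comp level} and Clarke's Theorem \ref{thm:Clarke3}, then match the four topological models of Theorem \ref{thm:comp} with $\widehat{\Mm_{finite,+}}$, $\widehat{\Mm_{finite}}$, and their unions with $\{g_\infty\}$ through the canonical bijections (the collapsed point $*_0$ being the single class of zero-volume sections). Your extra verification that $d_{(v\circ f)g_E}$ is a genuine metric via Lemmas \ref{lem:unif conti} and \ref{lem:R12}(1) is a worthwhile explicit check of a standing hypothesis the paper leaves implicit, but it does not change the argument.
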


\begin{proof}
By Proposition \ref{prop:comp level}, 
the metric completion $\overline{\Mm_1}$ of $\Mm_1 = f^{-1}(1) \subset \Mm$ 
w.r.t. the metric induced from the induced Riemannian metric from $g_E$ is homeomorphic to 
$$
\left\{ [ g_k ] \in \overline{(\Mm, d_{g_E})} 
\ \middle| \ \lim_{k \rightarrow \infty} f(g_k) = 1 \right \}. 
$$
By the proof of Theorem \ref{thm:Clarke3}, 
this is identified with 
$\widehat{\Mm_{finite, 1}} := \Mm_{finite, 1}/ \sim$, 
where $\Mm_{finite, 1} = \{ g \in \Mm_{finite} \mid f(g)=1 \}.$ 
Then since there are canonical bijections between 
$\R_{>0} \times \overline{\Mm_1}$, 
$(\{ 0 \} \cup \R_{> 0}) \times \overline{\Mm_1} / \left( \{ 0 \} \times \overline{\Mm_1} \right)$, 
$(\R_{> 0} \cup \{ \infty \} ) \times \overline{\Mm_1} / 
\left( \{ \infty \} \times \overline{\Mm_1} \right)$, 
$
\left(\{ 0\} \cup \R_{> 0} \cup \{ \infty \} \right) \times \overline{\Mm_1} / 
\left( \{0, \infty \} \times \overline{\Mm_1} \right)
$
and 
$\widehat{\Mm_{finite, +}}$, $\widehat{\Mm_{finite}}$, $\widehat{\Mm_{finite, +}} \cup \{ g_\infty \}$, 
$\widehat{\Mm_{finite}} \cup \{ g_\infty \}$, 
respectively, 
the proof is completed by Theorem \ref{thm:comp}. 
\end{proof}

\begin{remark} 
This theorem generalizes Theorem \ref{thm:CR}. 
The weak Riemannian metric $(v \circ f) g_E$ was first considered 
by Bauer, Harms and Michor in \cite{BHM}. 
They also considered weak Riemannian metrics weighted by scalar curvature 
and described the geodesic equation for these weak Riemannian metrics. 
Then they showed that the exponential mapping for some of them is a local diffeomorphism. 

The weak Riemannian metric $g_E/f$ was the first example 
whose metric completion is strictly smaller than that of the Ebin metric $g_E$. 
We can give infinitely many such examples by Theorem \ref{thm:comp riem} (1). 
\end{remark}


\subsubsection{The Teichm\"uller space} \label{subsec:Teich}

In addition to the setting of Section \ref{subsec:Riem met}, 
suppose that $M$ is a compact Riemann surface of genus $\k \geq 2$. 
Let $\Mm_{<0} \subset \Mm$ be the space of all 
smooth Riemannian metrics of constant negative sectional curvature on $M$.

The restrictions of $g_E$ and $f$ in (\ref{eq:Ebin metric}) and (\ref{eq:Ebin vol}) to $\Mm_{<0}$ 
define a Riemannian-metric and a function on $\Mm_{<0}$. 
These are invariant under the action of 
${\rm Diff}_0 (M)$, the identity component of the diffeomorphism group. 
Thus they induce 
a Riemannian metric and a function on $\Tt_{<0} := \Mm_{<0}/{\rm Diff}_0 (M)$. 
By an abuse of notation, we denote these by $g_E$ and $f$. 
Proposition \ref{prop:homog pair riem} implies that 
$(g_E, f)$ is a homogeneous pair of degree $1$ 
w.r.t. the canonical $\R_{>0}$-action on $\Mm_{<0}$.

By the Gauss-Bonnet formula, we have 
$$
K_g \cdot f(g) = 2\int_M K_g {\rm vol}_g = 4 \pi (2-2 \k). 
$$ 
for $g \in \Mm_{<0}$. 
Thus setting $l=8 \pi (\k -1)$, we have 
$$
\Tt_l := \{ [g] \in \Tt_{<0} \mid f([g]) = l \} = \{ [g] \in \Tt_{<0} \mid K_g =-1 \},  
$$
which is called the Teichm\"uller space of $M$. 
The induced Riemannian metric on $\Tt_l$ from $g_E$ is called the Weil-Petersson metric. 

This space is well understood. 
The space $\Tt_l$ is known to be a $(6 \k - 6)$-dimensional manifold homeomorphic to $\R^{6 \k -6}$. 
Since $g_E$ and $f$ are invariant under the action of ${\rm Diff}_+ (M)$, 
where ${\rm Diff}_+ (M)$ is the group of orientation preserving diffeomorphisms of $M$, 
they induce a Riemannian metric and a function on 
the orbifold $\Mm_{<0}/{\rm Diff}_+ (M) = \Tt_{<0}/{\rm MCG}(M)$, 
where 
${\rm MCG}(M)={\rm Diff}_+ (M)/{\rm Diff}_0 (M)$ is the mapping class group. 
By an abuse of notation, we denote these by $g_E$ and $f$. 
Then the metric completion of $\Tt_l/{\rm MCG}(M)$ 
w.r.t. the metric induced from $g_E$ is homeomorphic to the Deligne-Mumford compactification 
of the moduli space of Riemann surfaces of genus $\k$, 
which is a projective algebraic variety.

The statements in this paper would be true for orbifolds. 
On the orbifold 
$
\Mm_{<0}/{\rm Diff}_+ (M) = \Tt_{<0}/{\rm MCG}(M), 
$
$(g_E, f)$ is a homogeneous pair of degree $1$ 
w.r.t. the canonical $\R_{>0}$-action on $\Mm_{<0}/{\rm Diff}_+ (M)$ 
by Proposition \ref{prop:homog pair riem}. 
Then we have the metric completion as in Theorem \ref{thm:comp}. 
In particular, for a function $v:\R_{>0} \rightarrow \R_{>0}$ 
corresponding to the case (4) in Theorem \ref{thm:comp}, 
the metric completion of $\Mm_{<0}/{\rm Diff}_+ (M)$
w.r.t. the metric induced from $(v \circ f) g_E$ will be compact 
by Remark \ref{rem:comp rough}. 
It will be interesting if we can know that 
the metric completion of $\Mm_{<0}/{\rm Diff}_+ (M)$ is also 
a projective algebraic variety for some $v$.

\appendix
\section{Appendix} \label{app:notation}

We summarize the notations and basic definitions used in this paper. 

\begin{definition} \label{def:def}
Let $(M, g)$ be a pseudo-Riemannian manifold. 
We call a pseudo-Riemannian metric {\bf definite} 
if it is positive or negative definite. 
\end{definition}

\begin{definition} \label{def:comp}
Let $(Z,d)$ be a metric space. 
The {\bf metric completion} $\overline{Z}$ w.r.t. the metric $d$ is defined by 
$\overline{Z} = Z_C/\sim$, 
where $Z_C$ is the space of Cauchy sequences in $Z$ 
and $\sim$ is the equivalence relation defined by 
$
\{ z_k\} \sim \{ z'_k \} \Leftrightarrow \lim_{k \rightarrow \infty} d (z_k, z'_k) =0. 
$
Denote by $[z_k]$ the equivalence class of $\{ z_k \}$. 
Then $\overline{Z}$ is a metric space with the metric 
$
d ([z_k], [z'_k]) = \lim_{k \rightarrow \infty} d (z_k, z'_k), 
$
where we also use $d$ to describe the metric on $\overline{Z}$ by an abuse of notation. 
\end{definition}

We summarize the notations used in this paper. 
In the following table, $(M, g)$ is a pseudo-Riemannian manifold.

\vspace{0.5cm}
\hspace{-1cm}
\begin{tabular}{|lc|l|}
\hline
Notation                        & & Meaning \\ \hline \hline
$\R_{>0}$                       &  & $\R_{>0} = \{ x \in \R \mid x>0 \}$\\
 $i(\cdot)$                     &  & The interior product \\
$|v|_g$                           &  & $|v|_g = \sqrt{g(v,v)}$ for $v \in TM$ when $g$ is positive-definite\\
$d_g$                            &  & The induced (pseudo)metric from $g$ when $g$ is positive-definite\\
$\overline{M}$                & & The metric completion of $M$ w.r.t. $d_g$ when $g$ is positive-definite\\
$[x_k]$                          &  & The equivalence class in $\overline{M}$ of a Cauchy sequence 
                                           $\{ x_k \} \subset M$\\
${\rm grad}^g f$              & & The gradient vector field of a function $f$ defined by 
$g({\rm grad}^g f, \cdot) = df$\\
$v^{\flat} \in T^* M$       & & $v^{\flat} = g(v, \cdot)$ for $v \in TM$ \\ 
$\alpha^{\sharp} \in TM$ & & $\alpha = g(\alpha^{\sharp}, \cdot)$ for $\alpha \in T^* M$ \\
$\nabla^g$                     & & The Levi-Civita connection of $g$\\
$R^g$                            & & The curvature tensor: $R^g (A,B) = [\nabla^g_A, \nabla^g_B] 
- \nabla^g_{[A,B]}$ for $A,B \in TM$\\
$K^g$                            & & The sectional curvature of $g$\\
$\mathfrak{X}(M)$           & & The space of smooth vector fields on $M$\\
$\r', \r'', \cdots$             & & For a function of one variable $\rho = \rho(r)$, 
$\r' = d \r/d r, \r''=d^2 \r/d r^2, \cdots$\\
$\dot r, \ddot r, \cdots$  & &  For a function of one variable $r = r (t)$, $\dot r = d r/d t, \ddot r = d^2 r/d t^2, \cdots$\\
$\p_r$                           & & $\p_r = \p/\p r$\\
$K^g(\p_r)$                    & & $K^g(\p_r) = \frac{- 2 \r'' \xi + \r' \xi'}{2 \r \xi^2}$ defined in (\ref{eq:def Kpr})\\
$E_1$                            & & $E_1=g_Y (\dot y_0, \dot y_0) (\r (r_0))^4$ 
defined in Proposition \ref{prop:geod re} \\
$E_2$                            & & $E_2= \xi (r_0)  (\dot r_0)^2 + g_Y (\dot y_0, \dot y_0) (\r(r_0))^2$ 
defined in Lemma \ref{lem:wp geod 2} \\
$E_3$                            & & $E_3 = \r (r_0)^2$ defined in Proposition \ref{prop:geod re} \\
$F$                               & & $F= \frac{1}{4} \left(\left( \frac{\dot r_0}{r_0} \right)^2 + \frac{g_Y (\dot y_0, \dot y_0)}{k} \right)$ defined in  Proposition \ref{prop:geod wp explicit} \\
$g(w)$                           & & $g(w) = \frac{k w(r)}{r^2} d r^2+ w(r) g_Y$ given in (\ref{eq:def gw})\\
\hline
\end{tabular}

\vspace{0.5cm}

\address{
Gakushuin University, 1-5-1, Mejiro, Toshima,Tokyo, 171-8588, Japan}
{kkawai@math.gakushuin.ac.jp}

\end{document}